\theoremstyle{plain}
\newtheorem{theorem}{Theorem}[section]
\newtheorem{proposition}[theorem]{Proposition}
\newtheorem{lemma}[theorem]{Lemma}
\newtheorem{corollary}[theorem]{Corollary}
\newtheorem{example}[theorem]{Example}
\newtheorem{remark}[theorem]{Remark}
\newtheorem{definition}[theorem]{Definition}
\newtheorem{conjecture}{Conjecture}
\def\qed{\hfill{$\blacksquare$}\medskip }
\def\BET{\begin{theorem}}
\def\ENT{\end{theorem}}
\def\BEP{\begin{proposition}}
\def\ENP{\end{proposition}}
\def\BEL{\begin{lemma}}
\def\ENL{\end{lemma}}
\def\BEC{\begin{corollary}}
\def\ENC{\end{corollary}}
\def\BEE{\begin{example} \rm}
\def\ENE{\end{example}}
\def\BER{\begin{remark} \rm}
\def\ENR{\end{remark}}
\def\BED{\begin{definition} \rm}
\def\END{\end{definition}}
\def\BECJ{\begin{conjecture}}
\def\ENCJ{\end{conjecture}}
\def\bea{\begin{eqnarray}}
\def\eea{\end{eqnarray}}
\def\beas{\begin{eqnarray*}}
\def\eeas{\end{eqnarray*}}
\def\beq{\begin{equation}}
\def\eeq{\end{equation}}
\def\bbI{{\mathbb I}}
\def\bbC{{\mathbb C}}
\def\bbK{{\mathbb K}}
\def\bbR{{\mathbb R}}
\def\cA{{\mathcal A}}
\def\cB{{\mathcal B}}
\def\cR{{\mathcal R}}
\def\cT{{\mathcal T}}
\def\cV{{\mathcal V}}
\newcommand{\eps}{\varepsilon}
\newcommand{\dsp}{\displaystyle}
\newcommand{\mrm}[1]{\mathrm{#1}}
\def\tr{{\mathsf tr}}
\newcommand{\om}{\omega}
\newcommand{\Om}{\Omega}
\newcommand{\N}{\mathbb{N}}
\newcommand{\R}{\mathbb{R}}
\newcommand{\rcoef}{\mathcal{R}}
\newcommand{\tcoef}{\mathcal{T}} 
\newcommand{\rN}{r}
\newcommand{\rD}{R}
\begin{document}

~\vspace{0.3cm}
\begin{center}
{\sc \bf\LARGE 
Acoustic waveguide with a dissipative inclusion}
\end{center}

\begin{center}
\textsc{Lucas Chesnel}$^1$, \textsc{J\'er\'emy Heleine}$^2$, \textsc{Sergei A. Nazarov}$^3$, \textsc{Jari Taskinen}$^4$\\[16pt]
\begin{minipage}{0.9\textwidth}
{\small
$^1$ Inria, Ensta Paris, Institut Polytechnique de Paris, 828 Boulevard des Mar\'echaux, 91762 Palaiseau, France;\\
$^2$ Institut de Math\'ematiques de Toulouse CNRS UMR 5219, Universit\'e Paul Sabatier, F-31062 Toulouse Cedex 9, France;\\
$^3$ Institute of Problems of Mechanical Engineering RAS, V.O., Bolshoi pr., 61, St. Petersburg, 199178, Russia;\\
$^4$ Department of Mathematics and Statistics, University of Helsinki,
P.O.Box 68, FI-00014 Helsinki, Finland.\\[10pt] 
E-mails:  \texttt{lucas.chesnel@inria.fr}, \texttt{jeremy.heleine@math.univ-toulouse.fr},\hfill\\ 
\texttt{srgnazarov@yahoo.co.uk}, \texttt{jari.taskinen@helsinki.fi}\\[-14pt]
\begin{center}
(\today)
\end{center}
}
\end{minipage}
\end{center}
\vspace{0.4cm}

\begin{minipage}{0.9\textwidth}
\noindent\textbf{Abstract.} We consider the propagation of acoustic waves in a waveguide containing a penetrable dissipative inclusion. We prove that as soon as the dissipation, characterized by some coefficient $\eta$, is non zero, the scattering solutions are uniquely defined. Additionally, we give an asymptotic expansion of the corresponding scattering matrix when $\eta\to0^+$ (small dissipation) and when $\eta\to+\infty$ (large dissipation). Surprisingly, at the limit $\eta\to+\infty$, we show that no energy is absorbed by the inclusion. This is due to the so-called skin-effect phenomenon and can be explained by the fact that the field no longer penetrates into the highly dissipative inclusion. These results guarantee that in monomode regime, the amplitude of the reflection coefficient has a global minimum with respect to $\eta$. The situation where this minimum is zero, that is when the device acts as a perfect absorber, is particularly interesting for certain applications. However it does not happen in general. In this work, we show how to perturb the geometry of the waveguide to create 2D perfect absorbers in monomode regime. Asymptotic expansions are justified by error estimates and theoretical results are supported by numerical illustrations. \\

\noindent\textbf{Key words.} Acoustic waveguide, dissipation, perfect absorber, asymptotic analysis, scattering matrix, boundary layer phenomenon.
\end{minipage}


\section{Introduction}
We are interested in the propagation of acoustic waves in a waveguide of $\R^d$ unbounded in one direction and which contains a bounded penetrable inclusion made of a dissipative material as represented in Figure \ref{Domain}. Generally speaking, an incident wave propagating in such a structure gives rise to a scattered field so that a part of the incoming energy is scattered at infinity while the other part is dissipated in the inclusion (see the identity (\ref{14})). The original motivation for this study comes from the design of perfect absorbers also known as Coherent Perfect Absorbers (CPA) or time-reversed lasers \cite{CGCS10,Long10,WCGNSC11}. These are particular configurations where all the energy of the incident field is dissipated in the inclusion. Numerical and experimental strategies to construct such perfect absorbers have been proposed in the physical community, for example in \cite{CGCS10,MTRRP15,RTRMTP16,JRPG17,YCFS17} (see also references therein). \\
\newline
In this article, we wish to consider the mathematical properties of the corresponding problem. The first goal of this work is to prove that the scattering solutions are well-defined for the problem (\ref{1}) below with the dissipation characterized by the parameter $\eta$ and to prove results concerning the corresponding scattering matrix $\mathbb{S}^\eta$. More precisely we will establish that $\mathbb{S}^\eta$ is symmetric for all $\eta\in[0;+\infty)$. However $\mathbb{S}^\eta$ is unitary only for $\eta=0$, that is when there is no dissipation. This is directly related to the fact that for $\eta>0$, some energy is dissipated in the inclusion. In order to get a perfect absorber, a natural idea might be to work with a large value of $\eta$, that is with what one might think be a very dissipative inclusion. However we will see that this is a wrong intuition because when $\eta$ tends to $+\infty$, the dissipative inclusion behaves like a sound soft obstacle. As a consequence, $\mathbb{S}^\eta$ converges to a unitary matrix and not to the zero matrix (what we would like to obtain a perfect absorber).\\
\newline
Below we compute an asymptotic expansion of the scattering solution $u^{\eta}$ when $\eta$ tends to zero (this is a rather direct result) and when $\eta$ tends to $+\infty$ (this is the first main result of the paper). From this, we derive the desired expansions of $\mathbb{S}^\eta$. Let us mention that when $\eta$ goes to $+\infty$, the field tends to zero inside the inclusion and becomes localized at its boundary. In other words, a boundary layer phenomenon appears, this is the so-called skin-effect. In order to capture it, it is necessary to work with an adapted structure for the asymptotic expansion. Such problems have already been studied in the literature (see \cite{ViLu,na20,na229}) and here we adapt existing techniques  to our case. Very close to our topic, let us mention the article \cite{HJN05} (see also the related works \cite{HJN08,DHJ10,HaLe10,BIJ20}). There the authors consider a problem similar to ours but set in a bounded domain with an impedance boundary condition which can be seen as a low order approximation of the Sommerfeld radiation condition in freespace. They propose and justify asymptotic models for highly dissipative inclusions. Their justifications mainly rely on proofs by contradiction. We will proceed a bit differently by giving a direct demonstration of the essential stability estimate of Theorem \ref{thT3}. Moreover, we will focus our attention on a waveguide problem and describe in detail consequences for the scattering matrix. \\
\newline
The second main outcome of this article is the justification of a method to create perfect absorbers in monomode regime. In this regime, $\mathbb{S}^\eta$ is simply a complex number (reflection coefficient). As we will see, for a given geometry, studying the behaviour of $\eta\mapsto\mathbb{S}^\eta$ is not sufficient because in general $\eta\mapsto\mathbb{S}^\eta$ does not go through zero. What we will show is how, for any fixed parameter $\eta>0$, to modify the shape of the waveguide to get a reflection coefficient equal to zero.\\
\newline
The outline is as follows. In Section \ref{sec1}, we start by presenting the setting. Then we prove results concerning the well-posedness of the scattering problem and establish some properties of the corresponding scattering matrix. Section \ref{SectionAsymptoSmallDissip} is dedicated to the derivation of an expansion of $\mathbb{S}^\eta$ as $\eta$ tends to zero (small dissipation). Then in Section \ref{sec5}, we give the formal procedure to compute an expansion of $\mathbb{S}^\eta$ when $\eta$ tends to $+\infty$. In Section \ref{sec6}, we prove an error estimate to justify the latter expansion. Possible generalizations and open questions are discussed in Section \ref{sec8} while numerical illustrations of the results are presented in Section \ref{SectionNumerics}. In Section \ref{SectionCPA}, we propose a strategy to construct 2D perfect absorbers in monomode regime by modifying the shape of the waveguide. Finally, in Section \ref{sec9}, we establish an important stability estimate which is the crucial ingredient of the justification of the asymptotic expansion of $\mathbb{S}^\eta$ for large values of $\eta$. This is the most technical part of the work. The main results of this article are Theorem \ref{thT1} (asymptotic of $\mathbb{S}^\eta$ for small $\eta$), Theorem \ref{thT2} (asymptotic of $\mathbb{S}^\eta$ for large $\eta$) and the strategies of Section \ref{SectionCPA} to design perfect absorbers.

\section{Problem setting} \label{sec1}

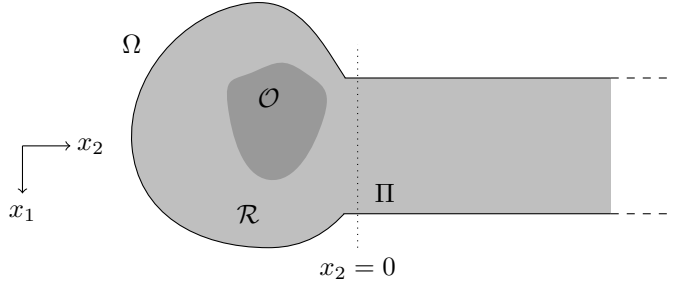
\begin{figure}[!ht]
\centering
\begin{tikzpicture}[scale=0.9]
\draw [fill=gray!50,draw=black] plot [smooth cycle, tension=1] coordinates {(-3,1) (-1,-0.5) (0.8,1) (0.5,1.5) (-1.3,3.1)};
\draw[fill=gray!50,draw=none](0,0) rectangle (4,2);
\draw [fill=gray!80,draw=none] plot [smooth cycle, tension=1] coordinates {(-1.6,1.6) (-1,0.5) (-0.2,1.4) (-0.5,2.1) (-1.2,2.1)};
\draw (0.09,0)--(4,0);
\draw (0.11,2)--(4,2);
\draw[dashed](4,0)--(5,0);
\draw[dashed](4,2)--(5,2);
\draw[dotted](0.3,-0.5)--(0.3,2.5);
\node at (0.7,0.3){\small$\Pi$};
\node at (-1.3,0.0){\small$\mathcal{R}$};
\node at (-3,2.5){\small$\Om$};
\node at (0.3,-0.8){\small$x_2=0$};
\node at (-1,1.7){\small$\mathcal{O}$};
\begin{scope}[shift={(-4.6,1)}]
\draw[->] (0,0)--(0.7,0);
\draw[->] (0,0)--(0,-0.7);
\node at (1,0){\small$x_2$};
\node at (0,-1){\small$x_1$};
\end{scope}
\node at (6,0.65){\small \phantom{$x_1$}};
\end{tikzpicture}
\caption{Example of waveguide $\Om$ in $\R^2$.\label{Domain}} 
\end{figure}

Let $\Omega := \mathscr{R} \cup \Pi $ be an acoustic waveguide of $\bbR^d$, $d\ge2$, made of the bounded resonator $\mathscr{R}\subset \bbR_-^d :=\{x=(x_1,\dots,x_d)\in\bbR^d\,|\,x_d<0\}$ and of the half-cylinder $\Pi := \om \times [0;+\infty)$ with bounded cross section $\om \subset \bbR^{d-1}$ (see Figure \ref{Domain}). We assume that $\mathscr{R}$, $\Pi$ are such that $\Omega$ is connected and that its boundary $\partial\Omega$ is Lipschitz. To model the dissipative inclusion, introduce $\mathcal{O}$ a non empty open set with smooth boundary such that $\overline{\mathcal{O}}\subset\mathscr{R}$ and $b \in \mathscr{C}^\infty( \overline{\mathcal{O}})$ a positive function that we extend by zero in $\Omega\backslash\overline{\mathcal{O}}$. We consider the Neumann boundary value problem  
\begin{equation}\label{1}
\begin{array}{|rlcl}
\Delta_x u^\eta+\lambda \big( 1 + i \eta\,b \big) u^\eta  &=& 0 & \mbox{ in }\Omega\\[3pt]
\partial_nu^\eta &=& 0  & \mbox{ on }\partial\Omega,
\end{array}
\end{equation}
where $\Delta_x$ is the Laplace operator, $\partial_n$ is the outward normal derivative and $\eta \geq 0$ is a scalar parameter. In what follows, we will study the dependence of the acoustic field $u^\eta$ with respect to the dissipation parameter $\eta$. In order to describe the propagation of waves in $\Omega$, we need to define the modes of the waveguide. To provide their expressions, first we introduce the eigenvalue problem for the Neumann Laplacian in the cross section $\om$, namely
\begin{equation}\label{EigenValPbCrossSection}
\begin{array}{|rcll}
-\Delta_y\varphi&=&\lambda\varphi&\mbox{ in }\om\\[3pt]
\partial_\nu\varphi&=&0&\mbox{ on }\partial\om.
\end{array}
\end{equation}
Here and in what follows, we use the notation $y:=(x_1,\dots,x_{d-1})$, $z:=x_d$. Additionally in (\ref{EigenValPbCrossSection}), $\partial_\nu$ stands for the outward normal derivative on $\partial\om$. We denote by $\lambda_j$ the corresponding eigenvalues and choose $\varphi_j$ associated eigenfunctions such that
\begin{equation}\label{eigenpairTransverse}
\begin{array}{l}
0=\lambda_0<\lambda_1 \le \lambda_2 \le  \cdots \le \lambda_j \le \cdots \rightarrow +\infty,\\[2pt]
(\varphi_j,\varphi_{k})_{\om}=\delta_{j,k},\qquad j,k\in\N:=\{0,1,\dots\}. 
\end{array}
\end{equation}
Above, $\delta_{j,k}$ stands for the Kronecker symbol and $(\cdot,\cdot )_\om$ is the natural scalar product of the Lebesgue space $L^2(\om)$. Let us fix
\[
\lambda\in(\lambda_{J-1};\lambda_J)
\]
for some $J\in\N^{\ast}:=\{1,2,\dots\}$. Then for $j=0,\dots,J-1$, we define the propagating waves 
\begin{equation}\label{defModes}
w^{\pm}_j(x)= (2|\alpha_j|)^{-1/2} e^{\pm i \alpha_j z}\varphi_j(y)\quad \mbox{ with }\ \alpha_j:=\sqrt{\lambda-\lambda_j}.
\end{equation}
The normalization factors in (\ref{defModes}) are introduced so that the scattering matrix defined after (\ref{9}) is unitary for $\eta=0$. The first goal of the article is to show the existence, for any $\eta\ge0$, of solutions to the problem \eqref{1} of the form
\begin{equation}\label{9}
u_j^\eta  = \chi\, w_j^- + \chi\,\sum_{k=0}^{J-1} s_{jk}^\eta w_k^+  + 
\tilde{u}_j^\eta, 
\end{equation}
where $\mathbb{S}^\eta = (s_{jk}^\eta)_{0\le j,k \le J-1}\in\mathbb{C}^{J\times J}$ is the scattering matrix and the remainder $\tilde{u}_j^\eta$ decay exponentially at infinity. In (\ref{9}), $\chi$ is a cut-off function depending only on the $z$ variable such that $\chi(x)=1$ for $z\ge1$ and $\chi(x)=0$ for $z\le0$. Note that the introduction of such a $\chi$ is necessary because a priori the waves (\ref{defModes}) are not defined in the whole $\Om$. Indeed nothing guarantee that the $\varphi_j$ extend outside of $\om$ to a function such that (\ref{defModes}) still solves the homogeneous Helmholtz equation.\\
\newline
The second goal of the article is to investigate the dependence of the scattering matrix $\mathbb{S}^\eta$ with respect to $\eta$. Classical results guarantee that for $\eta=0$, $\mathbb{S}^\eta$  is unitary and symmetric. We will derive the remarkable result that when $\eta$ tends to infinity, that is when one may think that the dissipation becomes large, $\mathbb{S}^\eta$ has a limit $\mathbb{S}^\infty$ which is also unitary and symmetric. As we will see, this result is the consequence of the appearance of a skin-effect phenomenon characterized by the concentration of the field 
$u_j^\eta$ inside the inclusion to a neighbourhood of the boundary $\partial\mathcal{O}$.

\section{Solvability and definition of the scattering matrix} \label{sec4}

First, we show the existence of solutions to the problem \eqref{1} of the form (\ref{9}). We could work with a Dirichlet-to-Neumann operator to bound the domain $\Om$ to recover compactness properties and apply Fredholm theory (see e.g. \cite{HaPG98}). Instead we choose here to work with Kondratiev, or weighted Sobolev, spaces. For $\beta\in\R$, define $W_\beta^1(\Omega)$ as the completion of $\mathscr{C}^\infty_0(\overline{\Om}):=\{\varphi|_{\Om}\,,\,\varphi\in\mathscr{C}^\infty_0(\R^d)\}$ for the norm
\bea
\Vert u ; W_\beta^1 (\Omega) \Vert = \Vert e^{\beta z} u ; H^1(\Omega)\Vert. \label{160}
\eea
Here $\mathscr{C}^\infty_0(\R^d)$ denotes the set of infinitely differentiable functions of $\R^d$ which are compactly supported, $\beta$ is the weight index and $H^1(\Omega)$ stands for the
standard Sobolev space. Clearly, we have $W_0^1(\Omega) = H^1(\Omega)$ but for $\beta > 0$,
the elements of $W_\beta^1 (\Omega)$ decay exponentially at infinity. 
Let us mention that we use this norm notation with a semicolon instead of writing it as a subscript because below we will have to deal with different parameters and it will be easier to read this way. Let us fix the weight index such that
\begin{equation}\label{HypoWeight}
\beta \in (0;\sqrt{\lambda_J-\lambda}) . 
\end{equation}
In this situation, all the evanescent modes of problem (\ref{1}) belong to $W_\beta^1 (\Omega)$. Now following \cite[Ch.\,5]{NaPl}, \cite{na489}, introduce the weighted space with detached 
asymptotics  $W^{\mrm{out}}(\Omega)$ consisting of functions 
\bea
u = \chi\sum_{j=0}^{J-1} a_j w_j^+  + \tilde{u},   \label{17}
\eea
with $a := (a_0, \ldots, a_{J-1}) \in \bbC^J$ and $\tilde{u} \in W_\beta^1(\Omega)$. We endow it with the norm  
\bea
\Vert u ; W^{\mrm{out}}(\Omega) \Vert = \big(  |a|^2 + \Vert \tilde{u} ; W_\beta^1 (\Omega)\Vert^2
\big)^{1/2} .  \label{18} 
\eea
The space $W^{\mrm{out}}(\Omega)$ allows us to impose the classical radiation condition for Problem (\ref{1}). Note  that we have $ W_{\beta}^1 (\Omega)\subset W^{\mrm{out}}(\Omega)\subset W_{-\beta}^1 (\Omega)$.  For $u\in W^{\mrm{out}}(\Omega)$, the map $\phi\mapsto \mathfrak{a}(u,\phi)$ with
\[
\mathfrak{a}(u,\phi)=\int_{\Om}\nabla u\cdot\overline{\nabla\phi}-\lambda(1+i\eta\,b)u\overline{\phi}\,dx
\] 
is well-defined in $W_{\beta}^1 (\Omega)$. Although $u\notin W_{\beta}^1 (\Omega) $ in general when $u\in W^{\mrm{out}}(\Omega)$, we will extend it as a map in $W_{-\beta}^1 (\Omega)$. For $\phi\in\mathscr{C}^\infty_0(\overline{\Om})$, applying Green's formula yields
\begin{equation}\label{Decompo}
\mathfrak{a}(u,\phi)=-\int_{\Om}(\Delta+\lambda(1+i\eta\,b))\bigg(\chi\sum_{j=0}^{J-1} a_j w_j^+\bigg)\overline{\phi}\,dx+\mathfrak{a}(\tilde{u},\phi).
\end{equation}
Note that above there is no boundary terms because $\chi$ is zero in the resonator and the $w_j^+$ satisfy the homogeneous Neumann boundary condition on $\partial\Om\cap\partial\Pi$. Since the integrand in the first two integrals is compactly supported, we infer that there is a constant $C>0$ independent of $\phi\in\mathscr{C}^\infty_0(\overline{\Om})$ such that
\begin{equation}\label{EstimCont}
|\mathfrak{a}(u,\phi)|\le C\,\Vert u ; W^{\mrm{out}}(\Omega) \Vert \Vert \phi ; W_{-\beta}^1 (\Omega)\Vert.
\end{equation}
By density of $\mathscr{C}^\infty_0(\overline{\Om})$ in $W_{-\beta}^1 (\Omega)$, we deduce that $\phi\mapsto\mathfrak{a}(u,\phi)$ can be uniquely extended as a continuous map in $W_{-\beta}^1 (\Omega)$. This discussion allows us to define the linear operator 
\begin{equation}\label{19}
\begin{array}{rccc}
\cA_\beta^\eta: & W^{\mrm{out}}(\Omega) & \to  & W_{-\beta}^1 (\Omega)^\ast \\[4pt]
 & u & \mapsto & \cA_\beta^\eta u
\end{array}
\end{equation}
where $\cA_\beta^\eta u$ is the unique element of $W_{-\beta}^1 (\Omega)^\ast$ such that 
\begin{equation}\label{DefOpWeak}
\langle \cA_\beta^\eta u,\phi\rangle_{\Om}=\mathfrak{a}(u,\phi),\qquad\forall \phi\in\mathscr{C}^\infty_0(\overline{\Om}).
\end{equation}
Here $W_{-\beta}^1 (\Omega)^\ast$ stands for the space of continuous antilinear forms over $W_{-\beta}^1 (\Omega)$ and $\langle\cdot,\cdot\rangle_\Omega$ denotes the (sesquilinear) duality pairing $W_{-\beta}^1 (\Omega)^\ast\times W_{-\beta}^1 (\Omega)$. Observe that from (\ref{Decompo}), for $v\in W_{-\beta}^1 (\Omega)$, we have
\begin{equation}\label{DefCA}
\langle \cA_\beta^\eta u,v\rangle_{\Om}=-\int_{\Om}(\Delta+\lambda(1+i\eta\,b))\bigg(\chi\sum_{j=0}^{J-1} a_j w_j^+\bigg)\overline{v}\,dx+\mathfrak{a}(\tilde{u},v).
\end{equation}
From estimate (\ref{EstimCont}), we see that $\cA_\beta^\eta : W^{\mrm{out}}(\Omega) \to W_{-\beta}^1 (\Omega)^\ast$ is continuous. Additionally, we have the following properties. 

\begin{proposition} \label{propP2}
Assume that $\beta$ satisfies (\ref{HypoWeight}).\\[3pt]
\noindent 1) The operator $\cA_\beta^0 $ is Fredholm of index zero. If its kernel is non empty, then $\ker\cA_\beta^0=\mrm{span}(v_1^\tr, \dots, v_T^\tr)$ where $v_1^\tr, \dots, v_T^\tr\in W_{\beta}^1 (\Omega)$ are the so-called trapped modes of the problem. In this situation, $F$ belongs to the range of $\cA_\beta^0 $ if and only if there holds $F(v_\tau^\tr)=0$, $\tau=1,\dots,T$.\\[3pt]
\noindent 2)  The operator $\cA_\beta^\eta$ is an isomorphism for all $\eta > 0$.
\end{proposition}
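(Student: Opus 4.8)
The plan is to treat the two statements in turn, both via Fredholm-type arguments for the operator $\cA_\beta^\eta$, exploiting the exponential weight $\beta$ in $(0;\sqrt{\lambda_J-\lambda})$.

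For part 1), I would first establish that $\cA_\beta^0$ is Fredholm of index zero. The idea is to compare with the ``free'' problem where $\mathscr{R}$ is absent, i.e.\ the pure half-cylinder $\Pi$, for which the Neumann Laplacian $-\Delta-\lambda$ on the Kondratiev scale is well understood: with the chosen weight $\beta$, the operator is an isomorphism from $W^{\mrm{out}}(\Pi)$ onto $W^1_{-\beta}(\Pi)^\ast$, because among the modes $w_j^{\pm}$ only the $J$ outgoing ones $w_j^+$ ($j<J$) lie in the window between the weights $-\beta$ and $+\beta$, and these are exactly the degrees of freedom built into $W^{\mrm{out}}$. One then writes $\cA_\beta^0$ as this isomorphism plus a correction supported in a bounded region (the resonator plus a neighbourhood of the junction, where $\chi$ and its derivatives live and where the $\lambda u$ zero-order term acts), and checks that this correction, viewed as a map $W^{\mrm{out}}(\Omega)\to W^1_{-\beta}(\Omega)^\ast$, is compact — this uses the compact embedding $H^1(K)\hookrightarrow L^2(K)$ on bounded $K$ together with the exponential decay encoded in the weighted norms. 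Fredholmness of index zero follows. Self-adjointness/symmetry of the underlying form then identifies $\ker\cA_\beta^0$ with the trapped modes $v_\tau^\tr$: any $u\in\ker\cA_\beta^0$ lies a priori in $W^{\mrm{out}}(\Omega)$, but testing against $u$ itself and using that $\mathrm{Im}\,\mathfrak{a}(u,u)=0$ for $\eta=0$ forces the outgoing coefficients $a_j$ to vanish (a standard argument: $0=\mathrm{Im}\,\mathfrak{a}(u,u)$ relates to the flux $\sum_j|a_j|^2$ of the radiating part, which must therefore be zero), hence $u\in W^1_\beta(\Omega)$ and decays exponentially — that is the definition of a trapped mode. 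The solvability condition $F(v_\tau^\tr)=0$ is then just the statement that the range is the annihilator of the kernel, which holds because $\cA_\beta^0$ is, up to the weight identification, formally self-adjoint, so $\mathrm{coker}\,\cA_\beta^0$ is canonically paired with $\ker\cA_\beta^0$.

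For part 2), since $\cA_\beta^\eta=\cA_\beta^0+\lambda i\eta\,b\,(\cdot)$ and the perturbation $u\mapsto -\lambda i\eta\int_\Omega b\,u\,\overline{(\cdot)}\,dx$ is compact from $W^{\mrm{out}}(\Omega)$ to $W^1_{-\beta}(\Omega)^\ast$ (again because $b$ is supported in the bounded set $\mathcal O$ and by compact Sobolev embedding), $\cA_\beta^\eta$ remains Fredholm of index zero for every $\eta$. It therefore suffices to prove injectivity for $\eta>0$. Let $u\in W^{\mrm{out}}(\Omega)$ with $\cA_\beta^\eta u=0$. Because the outgoing part of $u$ carries a genuine radiation condition, one shows as above, from $\mathrm{Im}\,\mathfrak{a}(u,u)=0$ — but now $\mathrm{Im}\,\mathfrak{a}(u,u)=\text{(flux at infinity)}-\lambda\eta\int_\Omega b|u|^2\,dx$, giving the absorption identity analogous to the paper's (\ref{14}) — that both the flux $\sum_j|a_j|^2$ \emph{and} the dissipated energy $\lambda\eta\int_{\mathcal O}b|u|^2\,dx$ vanish. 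The vanishing flux puts $u\in W^1_\beta(\Omega)$; the vanishing of $\int_{\mathcal O}b|u|^2$, together with $b>0$ on $\mathcal O$, forces $u\equiv0$ on $\mathcal O$. Then $\Delta u+\lambda u=0$ in $\Omega$ with $u=0$ on the open set $\mathcal O$; by unique continuation for the Helmholtz equation (the coefficients being constant off $\mathcal O$ and $\Omega$ connected), $u\equiv 0$ everywhere. Hence $\ker\cA_\beta^\eta=\{0\}$, and by Fredholmness of index zero the operator is an isomorphism.

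The main obstacle I expect is not the abstract Fredholm scheme but two analytic points that must be handled with care: first, justifying rigorously that the radiation condition built into $W^{\mrm{out}}(\Omega)$ really yields the expected flux/energy identity $\mathrm{Im}\,\mathfrak{a}(u,u)=\sum_{j}|a_j|^2-\lambda\eta\int_\Omega b|u|^2\,dx$ for $u$ only in $W^{\mrm{out}}$ and not in $W^1_\beta$ — this requires integrating by parts on a truncated cylinder $\omega\times[0,L]$, evaluating the boundary flux at $z=L$ using the explicit modal form and the normalization in (\ref{defModes}), and passing to the limit $L\to\infty$ controlling the evanescent remainder $\tilde u$ by its exponential decay; second, invoking unique continuation in the correct form, since $u$ vanishes on $\mathcal O\Subset\mathscr{R}$ and one propagates the zero set through the junction into the half-cylinder — this is standard for the Helmholtz operator with smooth (here piecewise-constant) coefficients on a connected Lipschitz domain, but it is the step that genuinely uses $\eta>0$ and connectedness of $\Omega$.
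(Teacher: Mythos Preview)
Your proposal is correct and follows essentially the same route as the paper. For part~1) the paper simply cites the classical Kondratiev theory (\cite[Ch.\,5]{NaPl}, \cite{na489,na546}) without spelling out the comparison-with-the-half-cylinder argument you sketch, and notes only that conservation of energy forces kernel elements to decay exponentially; your outline of how Fredholmness is actually obtained is the standard one behind those references. For part~2) the paper does exactly what you describe: it observes that $\cA_\beta^\eta-\cA_\beta^0$ is compact because $\mathcal{O}$ is bounded, then for $u\in\ker\cA_\beta^\eta$ applies Green's formula on the truncated domain $\Omega_L$, passes to the limit $L\to+\infty$ to get $2i\lambda\eta\int_{\mathcal O}b|u|^2\,dx+i|a|^2=0$, and concludes by unique continuation---precisely the truncation-and-flux computation you flag as the main technical point.
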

\begin{proof}
1) The first statement is a classical result, for the proof see  e.g. \cite[Ch.\,5]{NaPl} and \cite{na489,na546}. In particular, the fact that trapped modes, if they exist, decay exponentially at infinity is a consequence of the conservation of energy. \\
2) Observing that $\cA_\beta^\eta - \cA_\beta^0:W^{\mrm{out}}(\Omega)  \to  W_{-\beta}^1 (\Omega)^\ast$ is a compact operator because $\mathcal{O}$ is bounded, we deduce that $\cA_\beta^\eta$ is also Fredholm of index zero. Now if $u$ is an element of $\ker\cA_\beta^\eta$, first by taking $\phi\in\mathscr{C}^\infty_0(\Om)$ in (\ref{DefOpWeak}), we obtain $\Delta_xu + \lambda (1+i\eta\,b)u=0$ in $\Om$. Using Green's formula in the truncated waveguide $\Om_L := \{ x \in \Omega \, : \, z < L \}$, with $L>0$, then we can write
\[
\begin{array}{lcl}
0&=& \dsp\big( ( \Delta_x + \lambda (1 + i\eta\,b))u, u\big)_{\Om_L} - \big( u, ( \Delta_x + \lambda (1+i\eta\,b))u \big)_{\Om_L} \\[8pt]
 &=& \dsp 2 i\lambda\, \eta \int_{\mathcal{O}} b |u|^2 dx+\int_{\om}  \big( \overline{u(x) } \partial_z u(x) - u(x) \overline{\partial_z u(x) } \big)\Big|_{z= L} dy.
\end{array}
\]
Using the decomposition (\ref{17}) of $u$ as well as the normalisation of the modes (\ref{defModes}), by passing to the limit $L \to + \infty$, we get 
\beas
2 i\lambda\, \eta \int_{\mathcal{O}} b |u|^2 dx + i |a|^2 = 0 . 
\eeas
Thus there holds $u=0$ in $\mathcal{O}$ and the theorem of unique continuation (see \cite{Bers}, \cite[\S8.3]{CoKr13}, \cite{ARRV09} and the references therein) guarantees that $u\equiv0$ in $\Omega$. This shows the  second assertion. 
\end{proof}
\begin{proposition}
For $\eta\ge0$, for $j=0,\dots,J-1$, solutions  $u^\eta_j$ of the form (\ref{9}) exist. Moreover, the scattering matrix $\mathbb{S}^\eta$ appearing after (\ref{9}) is uniquely defined. 
\end{proposition}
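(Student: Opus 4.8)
The plan is to turn the problem into an operator equation for $\cA_\beta^\eta$ and then invoke Proposition \ref{propP2}. Since the incoming wave $\chi\,w_j^-$ does not belong to $W^{\mrm{out}}(\Omega)$, I would look for the solution in the form $u_j^\eta=\chi\,w_j^-+v_j^\eta$ with $v_j^\eta\in W^{\mrm{out}}(\Omega)$; by definition of $W^{\mrm{out}}(\Omega)$ this is exactly the requirement that $u_j^\eta$ has the structure (\ref{9}), the entries $s_{j0}^\eta,\dots,s_{j,J-1}^\eta$ being the $\bbC^J$-components of $v_j^\eta$ in the decomposition (\ref{17}). Plugging this ansatz into (\ref{1}) and using that $\chi$ depends only on $z$, that $b$ vanishes on $\{z\ge0\}$ (as $\overline{\mathcal{O}}\subset\mathscr{R}$), and that $w_j^-$ solves the homogeneous Helmholtz equation and the Neumann condition on $\Pi$, one gets that $u_j^\eta$ solves (\ref{1}) if and only if
\[
\Delta_x v_j^\eta+\lambda(1+i\eta\,b)v_j^\eta=-g_j\ \mbox{ in }\Omega,\qquad \partial_n v_j^\eta=0\ \mbox{ on }\partial\Omega,
\]
where $g_j:=[\Delta_x,\chi]w_j^-=2\nabla\chi\cdot\nabla w_j^-+(\Delta_x\chi)w_j^-$ is smooth and supported in $\{0\le z\le1\}$. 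Written weakly, this reads $\cA_\beta^\eta v_j^\eta=F_j$, where $F_j\in W_{-\beta}^1(\Omega)^\ast$ is defined by $\langle F_j,\phi\rangle_\Omega=\int_\Omega g_j\overline\phi\,dx$; continuity of $F_j$ is immediate because the weight $e^{-\beta z}$ is bounded from below on the compact support of $g_j$.

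The second step is the solvability of $\cA_\beta^\eta v_j^\eta=F_j$. For $\eta>0$, Proposition \ref{propP2}\,2) asserts that $\cA_\beta^\eta$ is an isomorphism, so a unique $v_j^\eta$ exists. For $\eta=0$, if the problem has no trapped mode then $\cA_\beta^0$ is again an isomorphism and we are done; otherwise, by Proposition \ref{propP2}\,1) it suffices to verify the compatibility conditions $F_j(v_\tau^\tr)=0$ for $\tau=1,\dots,T$. I would obtain these from Green's formula on the truncated waveguide $\Om_L$: since $\Delta_x v_\tau^\tr=-\lambda v_\tau^\tr$, and both $\chi\,w_j^-$ and $v_\tau^\tr$ satisfy the Neumann condition, integrating by parts twice reduces $\int_{\Om_L}g_j\overline{v_\tau^\tr}\,dx=\int_{\Om_L}\big(\Delta_x(\chi w_j^-)+\lambda\chi w_j^-\big)\overline{v_\tau^\tr}\,dx$ to a single boundary integral over $\om\times\{L\}$; because $w_j^-$ is propagating, hence bounded together with $\partial_z w_j^-$, while $v_\tau^\tr$ and $\partial_z v_\tau^\tr$ decay exponentially on $\Pi$, this integral is $O(e^{-\beta L})$ and vanishes as $L\to+\infty$. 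Hence $F_j$ lies in the range of $\cA_\beta^0$ and $v_j^0$ exists. In all cases $u_j^\eta:=\chi\,w_j^-+v_j^\eta$ is the desired solution of (\ref{1}) of the form (\ref{9}).

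For uniqueness of $\mathbb{S}^\eta$, suppose $u_j^\eta$ and $\widehat{u}_j^\eta$ are two solutions of the form (\ref{9}). Their difference belongs to $W^{\mrm{out}}(\Omega)$ and to $\ker\cA_\beta^\eta$. For $\eta>0$ this kernel is trivial, so the two solutions coincide. For $\eta=0$ the difference lies in $\ker\cA_\beta^0=\mrm{span}(v_1^\tr,\dots,v_T^\tr)\subset W_\beta^1(\Omega)$; since the representation (\ref{17}) is unique — as one sees by projecting on the $\varphi_k$ over $\om\times\{z\}$ and letting $z\to+\infty$, the modulus of the outgoing part being constant in $z$ while the $W_\beta^1$-part tends to $0$ — any function of $W_\beta^1(\Omega)$ has all its outgoing coefficients equal to zero, so the scattering rows of $u_j^\eta$ and $\widehat{u}_j^\eta$ agree. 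Thus $\mathbb{S}^\eta$ is uniquely defined for every $\eta\ge0$.

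The only point that is not a routine consequence of Proposition \ref{propP2} is the compatibility condition $F_j(v_\tau^\tr)=0$ in the presence of trapped modes: it is precisely what guarantees that the scattering solution exists at every $\eta\ge0$, and it relies on the Green's-formula computation on $\Om_L$ together with the exponential decay of the trapped modes to kill the boundary term at $z=L$. I expect this to be the main (indeed the only) obstacle.
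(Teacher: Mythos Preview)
Your proof is correct and follows essentially the same approach as the paper: reduce to the equation $\cA_\beta^\eta v_j^\eta=F_j$ with $F_j$ generated by the compactly supported commutator $[\Delta_x,\chi]w_j^-$, invoke Proposition~\ref{propP2}, and in the $\eta=0$ case verify the compatibility conditions by integrating by parts against the exponentially decaying trapped modes. Your treatment is slightly more explicit (the commutator formula, the Green's-formula computation on $\Om_L$, and the uniqueness of the decomposition~\eqref{17} via projection on the $\varphi_k$), but the structure and the key idea are the same.
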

\begin{proof}
For $j=0,\dots,J-1$ and $\eta\ge0$, define $F_j$ the element of $W^1_{-\beta}(\Om)^\ast$ such that
\begin{equation}\label{TermSource}
F_j(v)=\int_{\Om} (\Delta (\chi\,w_j^-)+\lambda(1+i\eta\,b)(\chi\,w^-_j))\,\overline{v}\,dx,\qquad\forall v\in W^1_{-\beta}(\Om).
\end{equation}
Proposition \ref{propP2} above ensures that for all $\eta\ge0$, there is a solution $v^\eta_j\in W^{\mrm{out}}(\Om)$ to the problem $\mathcal{A}^\eta_\beta v^\eta_j=F_j$. This is clear when $\eta>0$ or when $\eta=0$ with  $\mathcal{A}^\eta_\beta$ being injective. In those situations, by setting $u^\eta_j:=v^\eta_j+\chi\,w^-_j$, this guarantees the existence of the functions appearing in (\ref{9}) and also shows that the scattering matrix $\mathbb{S}^\eta$ is uniquely defined. For $\eta=0$, in case of existence of a non empty kernel, to verify that the compatibility conditions $F(v^{\mrm{tr}}_\tau)=0$, $\tau=1,\dots, T$, are indeed satisfied, one integrates by parts in (\ref{TermSource}) and exploits the fact that trapped modes decay exponentially at infinity (they belong to $W^1_\beta(\Om)$). Then $u^\eta_j$ is defined up to some linear combination of the trapped modes. However the latter do not modify the value of the $s^\eta_{jk}$ in (\ref{9}) forming $\mathbb{S}^\eta$ because they are exponentially decaying at infinity. 
\end{proof}
\noindent To set ideas, in case of existence of a non empty kernel for $\eta=0$, we impose the additional conditions
\begin{equation}\label{AdditionalConditions}
\int_{\mathcal{O}} b\,u_j^0\,\overline{v_\tau^\mrm{tr}}\,dx=0,\qquad \tau=1,\dots,T,
\end{equation}
so that $u^0_j$ becomes uniquely defined. Observe that it is always possible to impose relations (\ref{AdditionalConditions}) because the fact that the $v_\tau^\mrm{tr}$ are linearly independent and the theorem of unique continuation guarantee that the Gram matrix $((bv^{\mrm{tr}}_j,v^{\mrm{tr}}_\tau)_{\mathcal{O}})_{1\le j,\tau \le T}$ is invertible (recall that $b$ is positive in $\overline{\mathcal{O}}$). Note that conditions (\ref{AdditionalConditions}) are one choice among others. We consider that specific one because it will simplify the asymptotic analysis below (see the comment after (\ref{27})). \\
\newline
Now we establish a simple identity for the scattering matrix $\mathbb{S}^\eta$ which is a generalization of the conservation of energy. To proceed, introduce the symplectic (sesquilinear and anti-Hermitian) form $q$ such that
\[
q(v,v') = \int_{\om}  \big( \overline{v'(x)} \partial_z v(x) - v(x) \overline{\partial_z v'(x) } \big)\Big|_{z= L} dy
\]
where $L>0$ is given. Note that if $v$, $v'$ are two functions which solve Problem (\ref{1}), Green's formula ensures that the quantity $q(v,v')$ is independent of $L>0$ (because the dissipative inclusion does not meet $\Pi$). Using the definition (\ref{defModes}) of the modes, a direct calculus based on the orthogonality of the $\varphi_j$ shows that we have the identities
\begin{equation}\label{12}
q( w_j^\pm, w_k^\pm ) = \pm i \delta_{j,k},   \qquad 
q( w_j^\pm, w_k^\mp ) = 0, \qquad  j,k = 0, \ldots, J-1 .
\end{equation}
\BEP  \label{propP1}
For all $\eta\ge0$, the scattering matrix $\mathbb{S}^\eta$ is symmetric. Moreover there holds 
\bea\label{14}
\mathbb{S}^\eta\overline{\mathbb{S}^\eta}^\top + 2\lambda \eta \mathbb{B}^\eta= \bbI . 
\eea
Here $\bbI$ is the identity matrix of $\mathbb{C}^{J\times J}$ while $\mathbb{B}^\eta=(\mathbb{B}^\eta_{jk})_{0\le j,k\le J-1}$ denotes the positive definite Hermitian matrix such that
\bea \label{15} 
\mathbb{B}_{jk}^\eta  = \int_{\mathcal{O}} b \,u_j^\eta\,\overline{u_k^\eta}\,dx.
\eea
\ENP
\begin{remark}
Identity (\ref{14}) considered on the diagonal of the matrices can be interpreted as follows: the energy brought to the system by some incident wave is converted in some energy scattered at infinity and some energy dissipated in the inclusion.
\end{remark}
\begin{remark}
Identity (\ref{14}) shows that $\mathbb{S}^\eta$ is unitary for $\eta=0$ but not for $\eta>0$. 
\end{remark}
\begin{proof}Green's formula in the truncated waveguide $\Om_L := \{ x \in \Omega \, : \, z < L \}$, with $L>0$, gives 
\[
0=\big( ( \Delta_x + \lambda (1 + i\eta\,b))u_j^\eta, \overline{u_k^\eta}\big)_{\Om_L} 
- \big( u_j^\eta, ( \Delta_x + \lambda (1 - i\eta\,b))\overline{u_k^\eta} \big)_{\Om_L} 
= q(u_j^\eta, \overline{u_k^\eta}).
\]
Passing to the limit as $L \to + \infty$ in this identity and using relations \eqref{9}, \eqref{12}, we get $s^\eta_{jk}=s^\eta_{kj}$ which guarantees that $\mathbb{S}^\eta$ is symmetric. On the other hand, performing the limit passage $L\to+\infty$ in 
\[
\begin{array}{rcl}
0&=&\big( ( \Delta_x + \lambda (1 + i\eta\,b))u_j^\eta, u_k^\eta\big)_{\Om_L} 
- \big( u_j^\eta, ( \Delta_x + \lambda (1 + i\eta\,b))u_k^\eta \big)_{\Om_L}  \\[8pt]
 & = & \dsp 2 i \lambda \eta \int_{\mathcal{O}} b\,u_j^\eta \overline{u_k^\eta}\,dx + q(u_j^\eta, u_k^\eta),
 \end{array}
\]
we obtain
\[
0= 2 i \lambda \eta \int_{\mathcal{O}} b\,u_j^\eta \overline{u_k^\eta}\,dx -i \delta_{j,k} + i \sum_{p=0}^{J-1} s_{jp}^\eta \overline{s_{kp}^\eta}.
\]
This proves (\ref{14}). To see that $\mathbb{B}$ has the mentioned properties, we use that $b$ is positive in $\overline{\mathcal{O}}$ and observe that the restrictions $u_0^\eta|_\mathcal{O}, \dots, u_{J-1}^\eta|_\mathcal{O}$
are linearly independent due to the theorem of unique continuation (the fact that $u_0^\eta, \dots, u_{J-1}^\eta$ are linearly independent functions in $\Om$ can be established by taking the projections against the $\varphi_j$ at $z=L$ for different $L>0$). This is enough to guarantee that the Gram matrix $\mathbb{B}^\eta$ generated by the scalar product of the $L^2$-space on $\mathcal{O}$ with weight $b$ is positive definite and Hermitian. 
\end{proof}

\section{Asymptotic of the scattering matrix for small $\eta$}\label{SectionAsymptoSmallDissip}

Since $\mathcal{A}^\eta_\beta$ is a small perturbation of $\mathcal{A}^0_\beta$ when $\eta\to0^+$, the justification of the following asymptotic formula for $\mathbb{S}^\eta$ is straightforward.

\BET \label{thT1}
As $\eta$ tends to zero, we have the expansion 
\bea
\mathbb{S}^\eta = \mathbb{S}^0 -  \lambda\eta\mathbb{B}^0\mathbb{S}^0  + \tilde{\mathbb{S}}^\eta  . \label{24}
\eea
Here $\mathbb{B}^0$ is the positive definite Hermitian matrix introduced in 
\eqref{15} with $\eta=0$ and the remainder $\tilde{\mathbb{S}}^\eta$ is such that $\Vert \tilde{\mathbb{S}}^\eta ; \bbC^{J \times J} \Vert \leq c \eta^2$. 
\ENT
\begin{proof}
For the function $u^\eta_j$ introduced in (\ref{9}), we consider the ansatz
\[
u_j^\eta = u_j^0  + \eta u_j'+ \dots,
\]
where dots  stand for higher-order asymptotic terms. Inserting this expansion into \eqref{1} and extracting the terms of order $\eta$, we obtain the problem 
\begin{equation}\label{27}
\begin{array}{|rlcl}
-\Delta_x u_j'-\lambda u_j' &=& i\lambda bu_j^0 & \mbox{ in }\Omega\\[3pt]
\partial_nu_j' &=& 0  & \mbox{ on }\partial\Omega.
\end{array}
\end{equation}
In case of existence of trapped modes, the compatibility conditions of Proposition \ref{propP2} item $1)$ are fulfilled due to (\ref{AdditionalConditions}). This guarantees that Problem (\ref{27}) always admits a solution $u_j'\in W^{\mrm{out}}(\Om)$ with 
\[
u_j' = \chi\sum_{p=0}^{J-1} s_{jp}' w_p^+ + \tilde{u}_j', \qquad \tilde{u}_j' \in W_\beta^1(\Omega).
\]
To get the expression of the coefficients $s_{jp}'$, we can write
\[
i\lambda( b u_j^0, u_k^0)_\mathcal{O} = - \lim_{L \to + \infty} \big( (\Delta_x + \lambda) u_j', u_k^0 \big)_{\Om_L} = 
- \lim_{L \to + \infty} q(u_j',u_k^0) = - i \sum_{p=0}^{J-1} s_{jp}'\overline{s_{kp}^0 } . 
\]
Therefore if we set $\mathbb{S}'=(s'_{jp})_{0\le j,p\le J-1}$, we obtain $- \mathbb{S}'\overline{\mathbb{S}^0}^\top  = \lambda\mathbb{B}^0$. Since $\mathbb{S}^0$ is unitary, this implies $\mathbb{S}'=-\lambda\mathbb{B}^0\mathbb{S}^0$. 
\end{proof}

\section{Asymptotic of the scattering matrix for large $\eta$}\label{sec5}

In the previous section, we studied the behaviour of $\mathbb{S}^\eta$ as $\eta\to0$. Now we are interested in the situation where $\eta\to+\infty$. In this case, we expect that the large index material (see (\ref{1})) makes the field tend to zero inside the inclusion $\mathcal{O}$. This leads us to consider the problem with mixed boundary conditions
\begin{equation}\label{29}
\begin{array}{|rlcl}
\Delta_x u^\infty+\lambda u^\infty  &=& 0 & \mbox{ in }\Omega_\bullet:=\Om\setminus\overline{\mathcal{O}}\\[3pt]
\partial_nu^\infty &=& 0  & \mbox{ on }\partial\Omega\\[3pt]
u^\infty &=& 0  & \mbox{ on }\partial\mathcal{O}.
\end{array}
\end{equation}
As for (\ref{1}) with $\eta=0$ (see Proposition \ref{propP2}  item $1)$), this problem admits the diffraction solutions 
\bea\label{30}
u_j^\infty  =  \chi\,w_j^- + \chi\sum_{k=0}^{J-1} s_{jk}^\infty\,w_k^+  + \tilde{u}_j^\infty,
\eea
where $s_{jk}^\infty\in\mathbb{C}$, $\tilde{u}_j^\infty \in W_\beta^1(\Omega)$. The scattering matrix $\mathbb{S}^\infty:=(s_{jk}^\infty)_{0\le j,k\le J-1}\in\bbC^{J \times J}$ is symmetric and unitary. Note that for Problem (\ref{29}) there may exist trapped modes, that we denote by $m_1^\tr, \dots, m_K^\tr \in W_\beta^1(\Omega_\bullet) \subset H^1(\Omega_\bullet)$ (the family is assumed to be linearly independent). In this case, we impose additionally the conditions 
\begin{equation}\label{conditionsTrappedModes}
\int_{\partial\mathcal{O}} b^{-1/2}\partial_nu^\infty_j\,\overline{\partial_n m^{\mrm{tr}}_k}\,ds=0,\qquad k=1,\dots,K,
\end{equation}
so that $u_j^\infty$ becomes uniquely defined. Remark that we can indeed have (\ref{conditionsTrappedModes}) because the Gram matrix $((b^{-1/2}\partial_n m^{\mrm{tr}}_j, \partial_n m^{\mrm{tr}}_k)_{\partial\mathcal{O}} )_{1\le j,k\le K}$ is invertible since $\partial_n m^{\mrm{tr}}_1|_{\partial\mathcal{O}},\dots,\partial_n m^{\mrm{tr}}_K|_{\partial\mathcal{O}} $ are linearly independent functions. To show this property, use again the theorem of unique continuation and the fact that the $m^{\mrm{tr}}_\tau$ are linearly independent in $\Om_\bullet$. Again conditions (\ref{conditionsTrappedModes}) are one choice among others but we impose that specific one because it simplifies the asymptotic procedure below (see (\ref{CondiExistence})).\\
\newline
To construct the asymptotics of the scattering solutions $u_0^\eta, \dots, u_{J-1}^\eta$ and of $\mathbb{S}^\eta$ as $\eta \to + \infty$, outside the inclusion $\mathcal{O}$, we consider the ansatz
\bea\label{31}
u_j^\eta = u_j^\infty + \eta^{-1/2} u_j' + \dots \qquad\mbox{ in }\Omega_\bullet.  
\eea
Let us mention that we use the same notation as in the previous section for the correctors ($u'_j$ and $\mathbb{S}'$ below) but the objects are different. Since we expect the first term of the ansatz of $u_j^\eta$ inside $\mathcal{O}$ to be zero and since $\partial_nu^{\infty}_j$ cannot vanish on the entire surface $\partial\mathcal{O}$ due to the second boundary condition in (\ref{29}) together with the theorem of unique continuation, we need to compensate for the jump of the normal derivative on $\partial\mathcal{O}$. To proceed, we employ the method of Vishik-Lyusternik \cite{ViLu} and construct a boundary layer in the vicinity of $\partial \mathcal{O}$. Introduce a system of curvilinear coordinates in a neighborhood $\cV$ of $\partial \mathcal{O}$ (recall that $\mathcal{O}$ is smooth) such that $n$ is the oriented distance to $\partial \mathcal{O}$, $n > 0$
in  $\mathcal{O} \cap \cV$, and $s$ is some local coordinate on $\partial\mathcal{O}$. Observe that with this definition, on $\partial\mathcal{O}$, $\partial_n$ stands for the normal derivative with a normal pointing to the interior of $\mathcal{O}$. The Laplace operator in the local coordinates writes 
\bea\label{34}
\Delta_x = \partial_n^2 + L(n,s,\partial_n, \partial_s), 
\eea
where $L(n,s,\partial_n, \partial_s) = \ell_n(n,s)\partial_n + \ell_s(n,s) 
\partial_s+  \ell_{sn}(n,s)\partial_n\partial_s+ \ell_{ss} (n,s)\partial_s^2$. Here $\ell_n$, $\ell_s$, $\ell_{sn}$, $\ell_{ss}$  are smooth functions of $n\in [0;H]$, $H>0$ and $s \in \partial \mathcal{O}$. Introducing the stretched normal coordinate 
\[
t := \eta^{1/2} n,
\]
in view of (\ref{34}), we have 
\bea\label{36}
\Delta_x + \lambda(1 + i\eta\,b(x) )= \eta(\partial_t^2 + i \lambda b(s) ) + \ldots \ . 
\eea
Here and in what follows, to simplify we write $b(s)$ for the value of $b(x)$ for $x\in\partial\mathcal{O}$ ($n=0$). Inside $\mathcal{O}$, following the procedure presented in \cite{ViLu}, we consider the expansion
\bea\label{37}
u_j^\eta(x) = \eta^{-1/2} E(t,s) \partial_n  u_j^\infty|_{\partial\mathcal{O}}(x) + \dots
\eea
where $E$ is the solution of the ordinary differential equation (compare with (\ref{36}))
\bea\label{39}
\partial_t^2 E(t,s) + i \lambda b(s) E(t,s) = 0, \qquad t \in (0;+\infty),
\eea
which decays as $t \to + \infty$ and such that $\partial_tE(t,s)|_{t=0}=1$. A direct calculus yields  
\bea\label{38}
E(t,s) = \frac{\sqrt{2}}{(-1+i)\sqrt{\lambda\,b(s)}}\,\exp\Big( \cfrac{-1+i}{\sqrt{2}}\,\sqrt{\lambda\,b(s)} \,t \Big)= -\frac{1+i}{\sqrt{2\lambda\,b(s)}}\,\exp\Big( \cfrac{-1+i}{\sqrt{2}}\,\sqrt{\lambda\,b(s)} \,t \Big) .
\eea
Note that the main terms in (\ref{31}) and (\ref{37}) have the same normal derivative on the surface $\partial\mathcal{O}$. Now we are going to set the correction term in the expansion \eqref{31} to make the two-term 
asymptotic approximation continuous on $\partial \mathcal{O}$. We see that $u_j'$ must satisfy the problem  
\begin{equation}\label{40}
\begin{array}{|rlcl}
\Delta_x u_j'+\lambda u_j'  &=& 0 & \mbox{ in }\Omega_\bullet\\[3pt]
\partial_nu_j' &=& 0  & \mbox{ on }\partial\Omega\\[3pt]
u_j' &=& g'_j(x)  & :=E(0,s)\,\partial_n
u_j^\infty|_{\partial\mathcal{O}}(x)\qquad \mbox{ for }x\in\partial\mathcal{O}.
\end{array}
\end{equation}
According to \cite[Ch.\,5]{NaPl} or Proposition \ref{propP2} item $1)$, in case of absence of trapped modes for (\ref{29}), Problem (\ref{40}) admits a unique solution in $W^{\mrm{out}}(\Omega_\bullet)$ (this space is defined similarly to $W^{\mrm{out}}(\Om)$, see (\ref{17})). In case of existence of linearly independent trapped modes $m_1^{tr}, \dots, m_K^{tr}$, a solution exists if and only if we have 
\begin{equation}\label{CondiExistence}
\int_{\partial\mathcal{O}} g_j'\,\partial_nm^{\mrm{tr}}_k\,ds=0,\qquad k=1,\dots, K
\end{equation}
(note that we can impose that trapped modes be real valued). But thanks to our choice (\ref{conditionsTrappedModes}), these conditions are indeed satisfied. Therefore Problem (\ref{40}) admits the solution 
\[
u_j'  =  \chi\sum_{k=0}^{J-1} s_{jk}' w_k^+  + \tilde{u}_j,  
\]
where $\tilde{u}_j' \in W_\beta^1(\Omega_\bullet)$ and the 
coefficients  $s_{jk}'$ compose the correction term $\mathbb{S}'$ in the representation of the scattering matrix 
\bea\label{44}
\mathbb{S}^\eta = \mathbb{S}^\infty + \eta^{-1/2} \mathbb{S}' + \tilde{\mathbb{S}}^\eta. 
\eea
These coefficients are computed in the usual way by writing
\[
\int_{\partial \mathcal{O}} g_j'\, \overline{ \partial_n
u_k^\infty}\,ds = \lim_{L \to + \infty}
\int_{\om} \big( \overline{ u_k^\infty} \partial_z u_j'
- u_j' \overline{ \partial_z u_k^\infty} \big) \Big|_{z =L} dy=
\lim_{L \to + \infty} q(u_j', u_k^\infty) = 
i \sum_{p=0}^{J-1} s_{jp}'\overline{s_{kp}^\infty}.
\]
This gives 
\bea\label{45}
i \mathbb{S}'\overline{\mathbb{S}^\infty}^\top  = -\frac{1+i}{\sqrt{2 \lambda}}\,\mathbb{E} \qquad\mbox{ and so }\qquad \mathbb{S}' = \frac{-1+i}{\sqrt{2 \lambda}}\,\mathbb{E} \,\mathbb{S}^\infty 
\eea
where $\mathbb{E}$ is the positive definite Hermitian matrix of size $J \times J$ with entries
\bea \label{46}
\mathbb{E}_{jk} = \int_{\partial \mathcal{O}} b^{-1/2}\, 
\partial_n u_j^\infty \, \overline{\partial_n u_k^\infty}\,
ds.
\eea
The following assertion  will be proved in the next section. 

\BET \label{thT2}
As $\eta$ tends to $+\infty$, we have the expansion 
\begin{equation}\label{ExpansionLarge}
\mathbb{S}^\eta = \mathbb{S}^\infty + \eta^{-1/2}\,\frac{-1+i}{\sqrt{2 \lambda}}\,\mathbb{E} \,\mathbb{S}^\infty + \tilde{\mathbb{S}}^\eta.
\end{equation}
Here $\mathbb{E}$ is the positive definite Hermitian matrix introduced in (\ref{46}) and the remainder $\tilde{\mathbb{S}}^\eta$ is such that $\Vert \tilde{\mathbb{S}}^\eta ; \bbC^{J\times J } \Vert \leq 
c\eta^{-3/4} $. 
\ENT
\begin{remark}
As mentioned in the introduction, this shows that a very dissipative inclusion behaves as a sound soft obstacle and when $\eta$ tends to $+\infty$, $\mathbb{S}^\eta$ converges to the unitary scattering matrix associated to it. 
\end{remark}
\begin{remark}
In the energy identity $\mathbb{S}^\eta\overline{\mathbb{S}^\eta}^\top + 2\lambda \eta \mathbb{B}^\eta= \bbI$ (see (\ref{14})), one may think at first glance that the term $\lambda \eta \mathbb{B}^\eta$ becomes large when $\eta$ goes to $+\infty$. However this is not true because according to the definition (\ref{15}) of $\mathbb{B}^\eta$ and the result of Remark \ref{RmkDecayInside}, we have $\Vert \mathbb{B}^\eta ; \bbC^{J\times J } \Vert \leq c\eta^{-3/4} $. This is in agreement with the expansion (\ref{ExpansionLarge}).
\end{remark}

\section{Justification of asymptotics as $\eta \to + \infty$}  \label{sec6}

The most technical and complicated part in this article is the
derivation of an priori estimate for the solutions of Problem \eqref{1} for large values of $\eta$. To proceed, we will work in an \textit{ad hoc} weighted space with a composite structure as in \cite{MaNaPl} which combines the spaces of \cite{AgVi64} and \cite{Ko}. It will allow us to take into account the different behaviours of the $u_j^\eta$ on each side of $\partial\mathcal{O}$. In addition to the detached asymptotics at infinity together with the
exponential factor as in the norm \eqref{160}, we will include powers of the parameter $\eta$ and the function
\bea
\varrho(x) = \min \{ 1, {\rm dist}\,(x,\partial \mathcal{O}) \}. \label{47}
\eea
Note that the $\min$ with respect to one in this definition ensures that $\varrho$ has an influence only in a neighbourhood of $\partial\mathcal{O}$. We formulate the important a priori estimate in Theorem \ref{thT3} below but postpone its proof to the Appendix. In order to make the demonstration more comprehensible, we will make the assumption that trapped modes do not exist for Problem \eqref{29}. The general case can be
treated with marginal alterations that will be commented in item $5^\circ$ of the Appendix. \\
\newline
To state our estimate, first we introduce the norms we will work with. For $u\in W^1_\beta(\Omega)$, set 
\begin{equation}\label{510}
\Vert u; W_{\beta,\eta}^{1}(\Omega) \Vert = \big( \Vert e^{\beta z } \nabla_x u; L^2(\Omega) \Vert^2 + \Vert e^{\beta z } (\varrho + \eta^{-1/2})^{-1} u; L^2(\Omega_\bullet) \Vert^2 + \eta \Vert u ; L^2(\mathcal{O}) \Vert^2 \big)^{1/2}.
\end{equation}
Then for $u = \chi\sum_{j=0}^{J-1} a_j w_j^+  + \tilde{u}$ with $a = (a_0, \ldots, a_{J-1}) \in \bbC^J$ and $\tilde{u} \in W_\beta^1(\Omega)$, we define 
\bea\label{51}  
 \big\Vert u; W_{\eta}^{\mrm{out}}(\Omega) \Vert =
\big( |a|^2 + \Vert \tilde{u} ; W_{\beta,\eta}^{1}(\Omega)\Vert^2
\big)^{1/2}.
\eea
Observe that for all $\eta>0$, the norms $W^1_\beta(\Omega)$ and $W^1_{\beta,\eta}(\Omega)$ (respectively $W^{\mrm{out}}(\Omega)$ and $W^{\mrm{out}}_\eta(\Omega)$) are equivalent. However the constants of equivalence depend on $\eta$ and degenerate when $\eta$ tends to $+\infty$. 
Below, for $F\in W^1_{-\beta}(\Om)^\ast$, we shall also work with the norm
\[
\Vert F  ; W_{-\beta,\eta}^{1} (\Omega)^*  \Vert = \sup_{v\in W^1_{-\beta}(\Om)\setminus\{0\}}\cfrac{|\langle F,v\rangle_\Omega|}{\Vert v; W_{-\beta,\eta}^{1}(\Omega) \Vert}\,.
\]

\BET  \label{thT3}
Assume that Problem \eqref{29} does not have trapped modes. Then there is a constant $c>0$ such that for all $F\in W^1_{-\beta}(\Om)^\ast$, the function $u\in W^{\mrm{out}}(\Om)$ satisfying $\mathcal{A}^\eta_\beta u=F$ is such that
\bea\label{52}
\Vert u; W_{\eta}^{\mrm{out}} (\Omega) \Vert \leq c\,\Vert F  ; W_{-\beta,\eta}^{1} (\Omega)^*  \Vert,
\eea
for all $\eta\ge \eta_0$, where $\eta_0$ is given. 
\ENT

\noindent Now let us turn to the justification of the asymptotic expansion given in Theorem \ref{thT2}.\\
\newline
\textit{Proof of Theorem \ref{thT2}.} Denote by $u_\bullet^\eta$ (resp. $u_\circ^\eta$) the restriction of $u^\eta$ to the domain $\Omega_\bullet$ (resp. $\mathcal{O}$). In accordance with the formal procedure of Section \ref{sec5}, we define an asymptotic approximation $\hat{u}^\eta_j$ of the function $u^\eta_j$ appearing in (\ref{9}) by setting
\begin{equation}\label{53}
\hat{u}^\eta_j=\begin{array}{|ll}
\,\hat{u}^\eta_{j\bullet} & \mbox{ in }\Om_\bullet \\[4pt]
\,\hat{u}^\eta_{j\circ} & \mbox{ in }\mathcal{O}\\[4pt]
\end{array}\qquad\mbox{ with }\qquad\begin{array}{|ll}
\,\hat{u}^\eta_{j\bullet} & = u_j^\infty + \eta^{-1/2} u_j'\\[4pt]
\,\hat{u}^\eta_{j\circ}(x) & = \eta^{-1/2} \chi_\mathcal{O}(x) E(\sqrt{\eta} n,s) \partial_n  u_j^\infty|_{\partial\mathcal{O}}(s), \quad x \in \mathcal{O}.\\[4pt]
\end{array}
\end{equation}
Here the terms $u_j^\infty$, $u_j'$, $E$ are respectively introduced in (\ref{30}), (\ref{40}), (\ref{38}). Moreover $\chi_\mathcal{O} \in \mathscr{C}^\infty(\overline{\mathcal{O}})$ is a cut-off function with support in $\overline{\mathcal{O}} \cap \cV$ such that $\chi_\mathcal{O}(x)=1$ for $n \in [0;d]$ with $d >0$ (for the definition of $\cV$ and the local coordinates $(n,s)$, see before (\ref{34})). \\
\newline
Define the error $e_j^\eta:=u_j^\eta-\hat{u}_j^\eta$. The function $e_j^\eta$ is smooth on each side of $\partial\mathcal{O}$ and according to the boundary conditions in (\ref{29}), (\ref{40}), we have 
\bea\label{54} 
\hat{u}^\eta_{j\bullet} = \hat{u}^\eta_{j\circ}\qquad \mbox{ on } \partial \mathcal{O}.  
\eea
Moreover we observe that in the expansion of $e_j^\eta$ at infinity, the incoming waves of $u_j^\eta$, $\hat{u}_j^\eta$ cancel. These properties are sufficient to guarantee that $e_j^\eta$ is an element of $W^{\mrm{out}}(\Om)$. On the other hand, using that 
\[
\Delta_x \hat{u}^\eta_{j}+\lambda \hat{u}^\eta_{j}=0\qquad\mbox{ in }\Om_\bullet
\]
and defining the functions (see in particular (\ref{37}) for the calculus of the second one)
\begin{equation}\label{defDiscrepancy}
f := - \Delta_x \hat{u}^\eta_{j\circ} - \lambda(1+i\eta\,b\big) \hat{u}^\eta_{j\circ},\qquad\qquad g:=\partial_n \hat{u}_{j\bullet}^\eta|_{\partial\mathcal{O}}-\partial_n \hat{u}_{j\circ}^\eta|_{\partial\mathcal{O}} := \eta^{-1/2} 
\partial_n u_j',
\end{equation}
we get 
\[
\mathcal{A}^\eta_\beta e^\eta_j=F\qquad\mbox{where}\qquad F(\psi)=\int_{\mathcal{O}} f \overline{\psi} \,dx  +  
\int_{\partial\mathcal{O}} g\overline{\psi}\,ds,\qquad\forall\psi\in W^1_{-\beta}(\Om).
\]
Let us estimate the $W^1_{-\beta,\eta}(\Om)^\ast$-norm of $F$. First, by using formulas \eqref{34}, \eqref{39}, we can write
\[
\begin{array}{l}
 \big| \eta^{-1/2} \chi_\mathcal{O}(x) \big( \Delta_x + \lambda(1+ i\eta\,b(x) )\big) (E(\sqrt{\eta}n,s) \partial_ n  u_j^\infty |_{\partial\mathcal{O}}(s)) \big|
\leq c e^{-\delta \sqrt{\eta} n },  \\[8pt]
 \big| \eta^{-1/2} [\Delta_x, \chi_\mathcal{O}(x) ] (E(\sqrt{\eta}n,s) \partial_ n   u_j^\infty|_{\partial\mathcal{O}}(s)) \big|
\leq c e^{-\delta \sqrt{\eta} d} 
\end{array}
\]
with $\delta > 0$. Here $[\Delta_x, \chi_\mathcal{O}]$ stands for the commutator such that $[\Delta_x, \chi_\mathcal{O}]\varphi=\Delta_x(\chi_\mathcal{O}\varphi)-\chi_\mathcal{O}\Delta_x\varphi$. This allows us to write
\begin{equation}\label{inter0}
\Big|\int_{\mathcal{O}} f \overline{\psi} \,dx \Big| \leq  c\,\Big|\int_{\cV\cap\mathcal{O}} e^{- 2 \delta \sqrt{\eta} n}\,dx\Big|^{1/2} \,\Vert \psi ;L^2(\mathcal{O}) \Vert \le c \eta^{-3/4}\, \Vert \psi  ; W^1_{-\beta,\eta}(\Omega) \Vert  . 
\end{equation}
Note that above the integral over $\cV\cap\mathcal{O}$ gives a term of order $\eta^{-1/2}$. The additional factor $\eta^{-1/2}$ appears to compensate the term $\eta^{1/2}$ multiplying $\Vert \psi;L^2(\mathcal{O})\Vert$ in the norm \eqref{510}. On the other hand, from the expression of $g$ in (\ref{defDiscrepancy}), we can write
\begin{equation}\label{inter2}
\Big| \int_{\partial \mathcal{O}} g \overline{\psi}\, ds  \Big| \leq  
c \eta^{-1/2}  \Vert \psi ;L^2(\partial \mathcal{O}) \Vert.
\end{equation}
By working in local coordinates, rectifying the boundary $\partial\mathcal{O}$ and proceeding to an explicit calculus, one can establish the trace inequality (see e.g. \cite[Lem.\,5.2]{HJN05} or the proof of \cite[Thm.\,1.5.1.10]{Gris85})
\bea\label{55}
\Vert \psi ; L^2(\partial \mathcal{O})\Vert^2 \leq c_\mathcal{O} \Vert \psi; H^1(\mathcal{O}) \Vert \, \Vert \psi; L^2(\mathcal{O})\Vert.
\eea
Using it in (\ref{inter2}) and exploiting the definition of the norm (\ref{510}), we obtain 
\begin{equation}\label{inter3}
\Big| \int_{\partial \mathcal{O}} g \overline{\psi}\, ds  \Big| \le c \eta^{-3/4}\, \Vert \psi  ; W^1_{-\beta,\eta}(\Omega) \Vert  .   
\end{equation}
Gathering (\ref{inter0}) and (\ref{inter3}) leads to $\Vert F ; W_{-\beta,\eta}^{1} (\Omega)^*\Vert \leq c \eta^{-3/4}$. From the uniform estimate (\ref{52}) of Theorem \ref{thT3}, this implies 
\begin{equation}\label{ErrorEstimate}
\Vert e^\eta_j; W_{\eta}^{\mrm{out}} (\Omega)\Vert=
\Vert u_j^\eta-\hat{u}_j^\eta ; W_{\eta}^{\mrm{out}} (\Omega)\Vert \leq c \eta^{-3/4}.
\end{equation}
Finally we can take advantage of the use of the spaces with detached asymptotics: since the norm $\Vert e^\eta_j; W_{\eta}^{\mrm{out}} (\Omega) \Vert $ involves the moduli of the reflection coefficients $e^\eta_j$, this justifies the asymptotic expansion of the scattering matrix $\mathbb{S}^\eta$ given in Theorem \ref{thT2} as $\eta$ tends to $+\infty$. \hfill \qed

\begin{remark}\label{RmkDecayInside}
From (\ref{510})--(\ref{51}), we see that estimate (\ref{52}) implies in particular that the diffraction solutions $u_j^\eta$ introduced in (\ref{9}) are such that $\|u_j^\eta ; L^2(\mathcal{O})\| \le c\,\eta^{-1/2}$, where $c$ is independent of $\eta\ge\eta_0$. But this is not optimal. Indeed using the error estimate (\ref{ErrorEstimate}) together with the definition of $\hat{u}^\eta_j$ in (\ref{53}), we get $\|u_j^\eta ; L^2(\mathcal{O})\| \le c\,\eta^{-3/4}$.
\end{remark}

\section{Possible generalizations and open questions}  \label{sec8}

$1^\circ$. Dirichlet boundary condition. The problem consisting of the partial differential equation of \eqref{1} supplemented with the Dirichlet boundary condition 
\[
u^\eta = 0\qquad\mbox{ on }\partial\Omega
\]
instead of the Neumann one can be studied in the same way. All formulas remain basically the same, the only change being that the first eigenvalue $\lambda_1^D$ of the model Dirichlet problem on the cross-section $\om$
is positive so that to study waves scattering phenomena in $\Om$, one needs to assume that $\lambda  > \lambda_1^D > 0$.\\ 
\newline
$2^\circ$. Smoothness of boundaries. Concerning the study of the properties of the operator $\mathcal{A}^\eta_\beta$ in (\ref{19}), one can allow the surface $\partial \mathcal{O}$ to be only Lipschitz. Besides, the analysis of the asymptotics of the scattering matrix as $\eta \to 0^+$ can also be done as above when $\partial \mathcal{O}$ is only Lipschitz. However to consider the situation $\eta \to +\infty$, the smoothness of $\partial \mathcal{O}$ is definitely needed. To explain this observation, consider the case $d=2$ with $\mathcal{O}$ coinciding with a rectangle. Then the solution $u^\infty_j$ of Problem \eqref{29} has the
singularities $c\,r^{2/3} \sin ( 2 \theta / 3)$, where $(r,\theta) \in 
\bbR_+ \times (0;3\pi/2)$ are the polar coordinates associated with one corner of the rectangle (see \cite{Ko} and \cite[Ch.\,2]{NaPl}). Notice that the opening of the corner in $\Omega_\bullet$ is $3\pi/2$. The singularity of the normal derivative $\partial_n u_j^\infty$ is of the form 
$c\,r^{-1/3} \sin ( 2 \theta / 3)$ and therefore Problem \eqref{40} has no solution belonging to $H_{\rm loc}^1(\overline{\Omega_\bullet} )$. As a consequence the asymptotic procedure of Section \ref{sec5} fails. To compensate for these singularities one needs to construct two-dimensional power-law boundary layers in the vicinity of the corner points. Although similar boundary layers near angular and conical points of the boundaries have been studied in many papers, see \cite{butuz, 
na24, na150, na229} and others, this particular boundary  layer has not been examined yet even in the planar case, and determining its structure remains an open question. Note that the problem involved in the description of the above mentioned boundary layer near the corner points of the rectangle writes
\bea
- \Delta_\xi V - \lambda i B V = F\qquad\mbox{ in }\in\bbR^2, \label{00}
\eea
where $B$ is the function such that $B=b(P) > 0$ in the quadrant
$\bbK := \{ \xi \,|\, \xi_1 > 0,\,\xi_2 > 0 \}$ and $B=0$ in
$\bbR^2\setminus\overline{\bbK}$. Here $P$ stands for the considered corner point. 
Equation \eqref{00} has been obtained from \eqref{1} by the coordinate change $x \mapsto \xi = \eta^{1/2} (x-P)$ and the formal limit passage $\eta \to + \infty$.\\
\newline
$3^\circ$.  Impedance boundary condition. To model the fact that a part $\Gamma$ of $\partial\Om$ is dissipative, one can consider the following problem with an impedance boundary condition 
\begin{equation}\label{02}
\begin{array}{|rlll}
\Delta_x u^\eta+\lambda  u^\eta  &=& 0 & \mbox{ in }\Omega\\[3pt]
\partial_nu^\eta &=& i\eta\,bu^\eta  & \mbox{ on }\Gamma\\[3pt]
\partial_nu^\eta &=& 0  & \mbox{ on }\partial\Om\setminus\Gamma.
\end{array}
\end{equation}
Here $\Gamma $ is a bounded $(d-1)$-dimensional submanifold of $\partial \Omega$ with a smooth boundary $\partial \Gamma$ and we assume that $b> 0$ on $\overline{\Gamma}$. Then results analogous to Theorem \ref{thT1} and \ref{thT2} hold for the corresponding scattering matrix. More precisely, when $\eta \to 0^+$, similarly to \eqref{24}, we have $\mathbb{S}^\eta = \mathbb{S}^0 - \eta \mathbb{B}^0\mathbb{S}^0  + \tilde{\mathbb{S}}^\eta$ where $\mathbb{S}^0$ stands for the scattering matrix of the Neumann problem \eqref{1} with $\eta=0$ and $\mathbb{B}^0$ is the Hermitian positive matrix such that
\beas
\mathbb{B}_{jk}^0 = \int_{\Gamma} b u_j^0\,\overline{u_k^0}\, ds.
\eeas
On the other hand, when $\eta \to +\infty$, the asymptotic structure become much more involved. If $\Gamma$ is a manifold without boundary (for example, Problem \eqref{02} is posed in the domain $\Omega_\bullet$ introduced in (\ref{29}) and $\Gamma = \partial \mathcal{O}$), then we have the expansion
\bea
\mathbb{S}^\eta = \mathbb{S}^\infty + \eta^{-1} \mathbb{E}\,\mathbb{S}^\infty  + \tilde{\mathbb{S}}^\eta
\eea
where $\mathbb{S}^\infty$ is the scattering matrix of a mixed boundary value problem  similar to \eqref{29} with the condition $u^\infty=0$ on $\Gamma$ (instead of $u^\infty=0$ on $\partial\mathcal{O}$) and $\mathbb{E} = (\mathbb{E}_{jk})_{0\le j,k\le J-1}$ is the Hermitian positive matrix such that 
\bea
\mathbb{E}_{jk} = \int_{\Gamma} b^{-1} \partial_n u_j^\infty\, \overline{\partial_n u_k^\infty}\, ds.  \label{03}
\eea
However when $\overline{\Gamma}\cap\overline{\partial\Om\setminus\Gamma}\neq\emptyset$, the normal derivatives of the
solutions \eqref{30} become strongly singular on $\partial \Gamma$ so that
integrals \eqref{03} may diverge. As a consequence, one needs to add new boundary layers near $\partial \Gamma$. To clarify this, consider again a waveguide $\Omega \subset \bbR^2$ and select a connected arc $\Gamma\subset\partial \Omega$. Denote by $P^\pm$ the endpoints of $\Gamma$ where the Dirichlet and Neumann conditions meet. At these points the quantities $\nabla_x u_j^\infty$ get singularities behaving as
$| x - P^\pm|^{-1/2}$, see for example \cite[Ch.\,2]{NaPl}. Consequently the integrals \eqref{03} indeed diverge. Now assume in addition that $\partial \Omega$ is straight in a neighbourhood of $\partial\Gamma$. Then performing the coordinate change $x \mapsto \xi = \eta (x-P^\pm)$ and taking the limit $\eta \to + \infty$, one is led to consider the following problem
\begin{equation}\label{04}
\begin{array}{|l}
\,- \Delta_\xi V = F \qquad\mbox{ in }\{\xi = (\xi_1, \xi_2)\in\R^2\,|\,\xi_2>0\}\\ [5pt]
\,- \cfrac{\partial V }{\partial \xi_2} (\xi_1, 0) = 0\quad\mbox{ for }\xi_1 < 0, \qquad - \cfrac{\partial V }{\partial \xi_2} (\xi_1, 0) = i b^\pm
V(\xi_1,0) \quad\mbox{ for }\xi_1 > 0.  
\end{array}
\end{equation}
with $b^\pm := b(P^\pm)>0$. The solvability and behaviour at infinity of solutions of Problem \eqref{04} can be studied directly by the
approach of \cite{na20,na46}. Then the formal procedure for the 
construction of the asymptotics of the solutions to the original problem
with a large parameter $\eta$ can be adapted, with small modifications, 
from \cite{na24, Dauge}.\\
\newline
$4^\circ$. Monomode regime. Assume that $\lambda \in (0;\lambda_1)$ ($J=1$) so that only the modes 
\bea\label{06}
w_0^\pm (x) = ( 2 \sqrt{\lambda}\,{\rm mes}_{d-1}(\om))^{-1/2} e^{\pm i \sqrt{\lambda} z}
\eea
can propagate in the cylinder $\om \times \bbR$. In this situation, the scattering matrix reduces to a scalar reflection coefficient that we still denote by $\mathbb{S}^\eta\in\mathbb{C}$. Furthermore,  identity \eqref{14} of Proposition \ref{propP1} writes
\bea \label{07}
|\mathbb{S}^\eta|^2 +2\lambda\eta  \int_{\mathcal{O}} b\, |u^\eta|^2 
\,dx= 1.
\eea 
Here
\bea\label{08}
u^\eta = w_0^- + \mathbb{S}^\eta w_0^+   + \tilde{u}^\eta   
\eea
is the solution \eqref{9} with $j=0$. Note that the waves \eqref{06}
are defined everywhere in $\Omega$. Therefore the cut-off function $\chi$ of \eqref{9} can be omitted in \eqref{08}. Relation (\ref{07}) imposes that $|\mathbb{S}^\eta|\in[0;1)$ for $\eta>0$. Moreover Theorems \ref{thT1} and \ref{thT2} guarantee that 
\[
\lim_{\eta\to0^+}|\mathbb{S}^\eta| =\lim_{\eta\to+\infty}|\mathbb{S}^\eta| =1.
\]
Thus, the continuous function $\eta \mapsto |\mathbb{S}^\eta|$ has a global minimum at some $\eta_\star\in (0;+\infty)$. If $\mathbb{S}^{\eta_\star} = 0$, the device acts as a perfect absorber of the piston mode. However in general this minimum is not zero. We will explain in Section \ref{SectionCPA} how to modify the geometry to create perfect absorbers. 
To conclude with this discussion, let us give a remarkable formula (that we will not use in our study) for the derivative of the reflection coefficient with respect to the dissipation. The quantity $\partial_\eta u^\eta$ solves the problem
\[
\begin{array}{|rlcl}
\,- \Delta_x(\partial_\eta u^\eta) - \lambda \big( 1  + i\eta\,b \big) 
(\partial_\eta u^\eta)  &=& i \lambda b u^\eta &\mbox{ in }\partial \Omega \\[3pt]
\, \partial_n (\partial_\eta u^\eta)  &=& 0 &\mbox{ on }\partial \Omega
\end{array}
\]
and admits the representation 
\beas
\partial_\eta u^\eta = \partial_\eta \mathbb{S}^\eta\, w_0^+ + \widetilde{\partial_\eta u^\eta} .  
\eeas
Since we have $\overline{u^\eta  } = w^+_0 + \overline{\mathbb{S}^\eta}\,w^-_0 + 
\overline{\tilde{u}^\eta}$, the Green formula together with relations \eqref{12} for $j=0$ yield
\bea
-\lambda \int_{\mathcal{O}} b (u^\eta)^2 dx = - i \lim_{L \to + \infty}
q\big(\partial_\eta u^\eta, \overline{u^\eta} \big)  = \partial_\eta \mathbb{S}^\eta .
\label{05}
\eea
Notice that the integrand in \eqref{07} contains the factor $|u^\eta|^2$ while \eqref{05} involves $(u^\eta)^2$. 

\section{Numerics}\label{SectionNumerics}
In this section, we illustrate the results we have obtained above. To proceed, we compute numerically the scattering solutions $u_j^\eta$ introduced in (\ref{9}). We use a P2 finite element method in a domain obtained by truncating $\Om$. On the artificial boundary created by the truncation, a Dirichlet-to-Neumann operator with 15 terms serves as a transparent condition (see more details for example in \cite{Gold82,HaPG98,BoLe11}). Once we have computed $u_j^\eta$, we get easily the coefficients $s^\eta_{jk}$ constituting the scattering matrix $\mathbb{S}^\eta$. The numerics have been made using the library \texttt{Freefem++} \cite{Hech12}.\\
\newline
In Figure \ref{Total Field}, we display the field $u_0^\eta$ for different values of $\eta$ in a geometrical setting where only one mode can propagate ($J=1$). We also present the solution $u_0^\infty$ introduced in (\ref{30}) of the problem with a Dirichlet boundary condition on the obstacle. In accordance in particular with estimate (\ref{ErrorEstimate}), we see that $u_0^\eta$ converges to $u_0^\infty$ as $\eta$ tends to $+\infty$. Interestingly, we observe that this result seems to hold true also in the situation where $\mathcal{O}$ is a rectangle (see the right column) though this case does not enter our analysis (in our work $\mathcal{O}$ has to be smooth, see item $2^\circ$ of Section \ref{sec8}).\\
\newline
In Figure \ref{ScatteringCoeff1}, the setting is the same as in Figure \ref{Total Field}. Since we work in monomode regime, the scattering matrix $\mathbb{S}^\eta$ reduces to a complex number (reflection coefficient). Let us remark that in this simple geometry, for $\eta=0$, we have $u_0^0=w^-_0+w^+_0$ so that $\mathbb{S}^0=1$. This is observed on line 2. As expected, we  note that as $\eta$ tends to $+\infty$, $\mathbb{S}^\eta$ converges to $\mathbb{S}^\infty$ (see lines 2 and 3). For the rectangular inclusion, we 
notice that there is one value of $\eta$ such that $\mathbb{S}^\eta\approx0$ (see lines 1 and 2, column 2). This particular phenomenon does not happen in general (see column 1).\\
\newline
In Figure \ref{ScatteringCoeffMulti}, we display the eigenvalues of $\mathbb{S}^\eta$ in the complex plane for different values of $\eta$. Here the spectral parameter $\lambda$ has been chosen such that five modes can propagate ($\mathbb{S}^\eta$ is of size $5\times5$). For $\eta=0$, in this simple geometry we have $\mathbb{S}^0=\mrm{Id}$ so that the five eigenvalues are equal to one. For $\eta>0$, we observe that the modulus of the eigenvalues of $\mathbb{S}^\eta$ is smaller than one. This was expected because for $\eta>0$, $\mathbb{S}^\eta$ is no longer unitary, and more precisely this is in agreement with identity (\ref{14}). Finally, we see that when $\eta$ tends to $+\infty$, the eigenvalues of $\mathbb{S}^\eta$ converge to values on the unit circle. This corroborates the result of Theorem \ref{thT2} which guarantees that $\mathbb{S}^\eta$ tends to a unitary matrix.

\begin{figure}[!ht]
\centering
\includegraphics[width=0.48\textwidth]{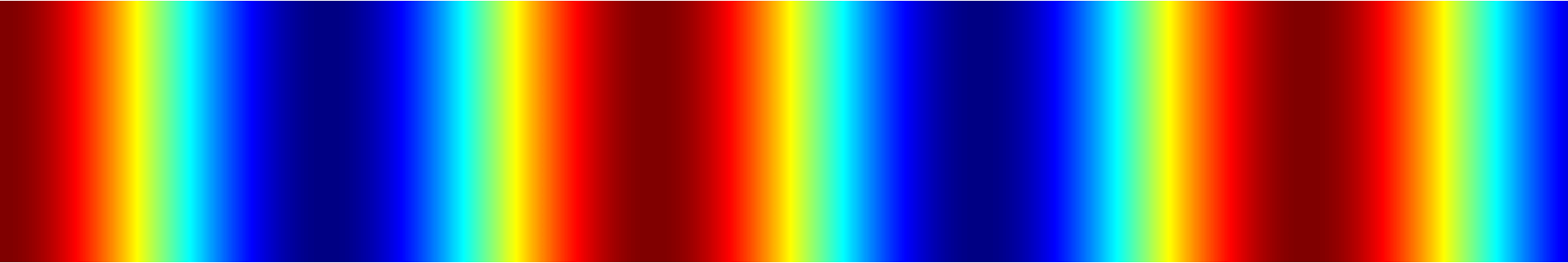}\quad\includegraphics[width=0.48\textwidth]{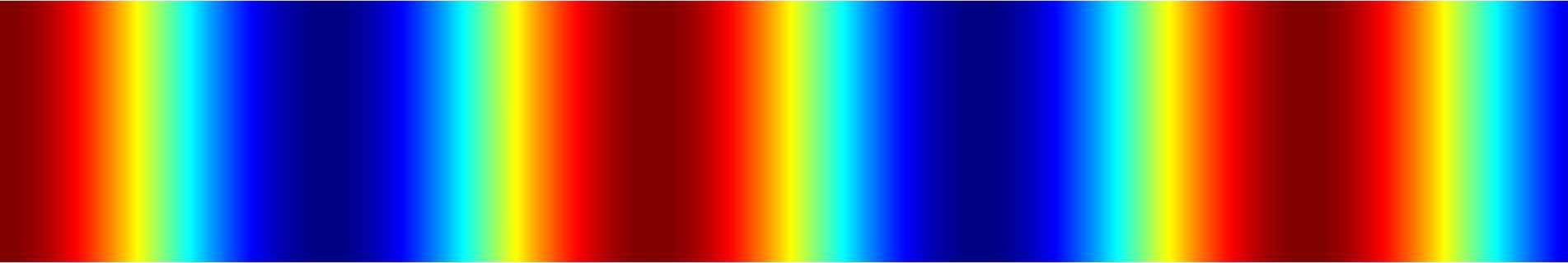}\\[2pt]
\includegraphics[width=0.48\textwidth]{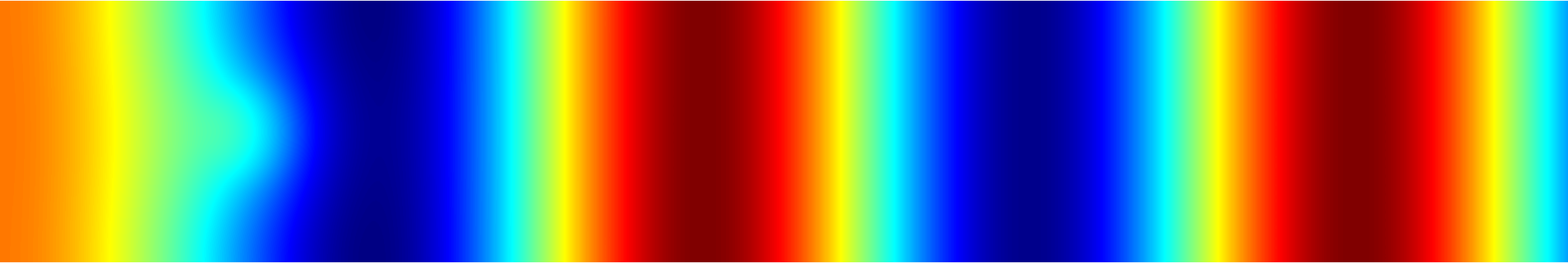}\quad\includegraphics[width=0.48\textwidth]{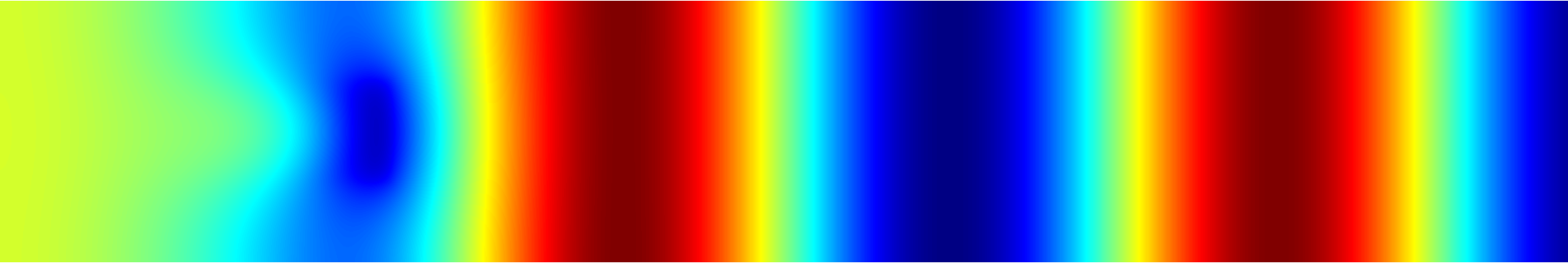}\\[2pt] 
\includegraphics[width=0.48\textwidth]{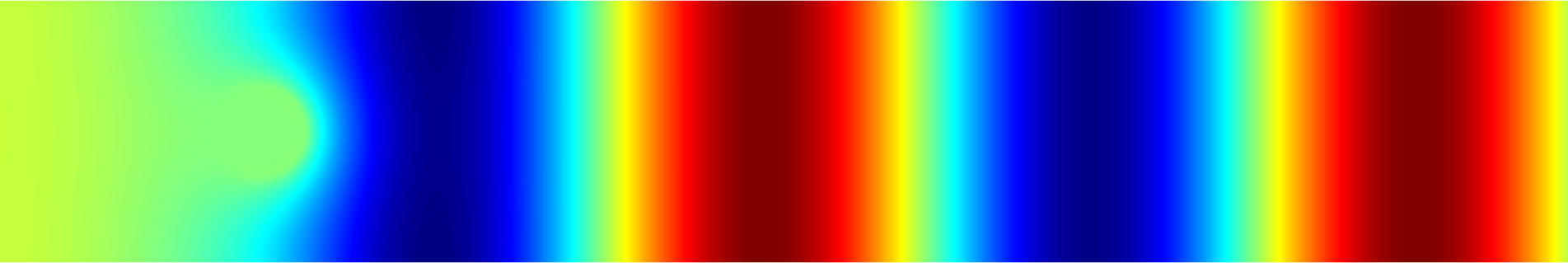}\quad \includegraphics[width=0.48\textwidth]{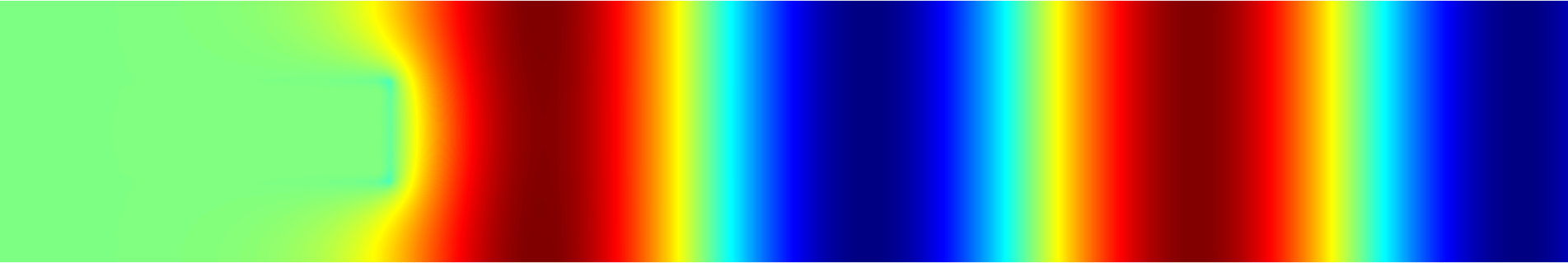}\\ [2pt]
\includegraphics[width=0.48\textwidth]{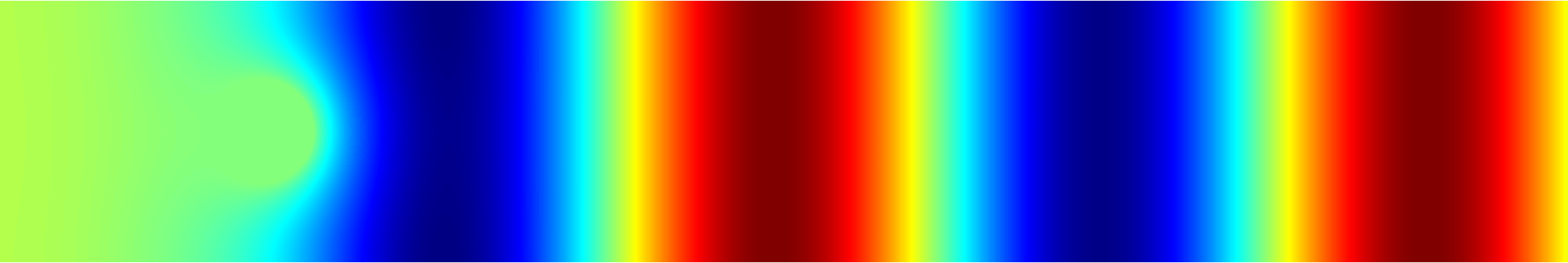}\quad \includegraphics[width=0.48\textwidth]{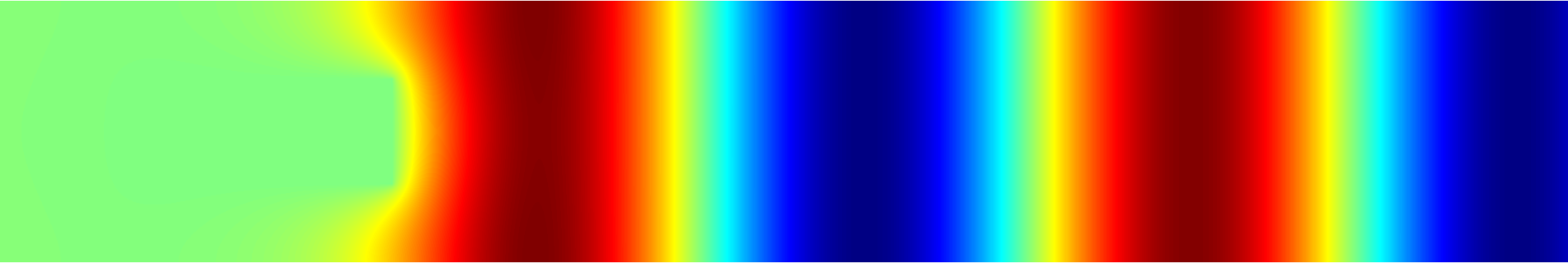}\\[2pt] 
\includegraphics[width=0.48\textwidth]{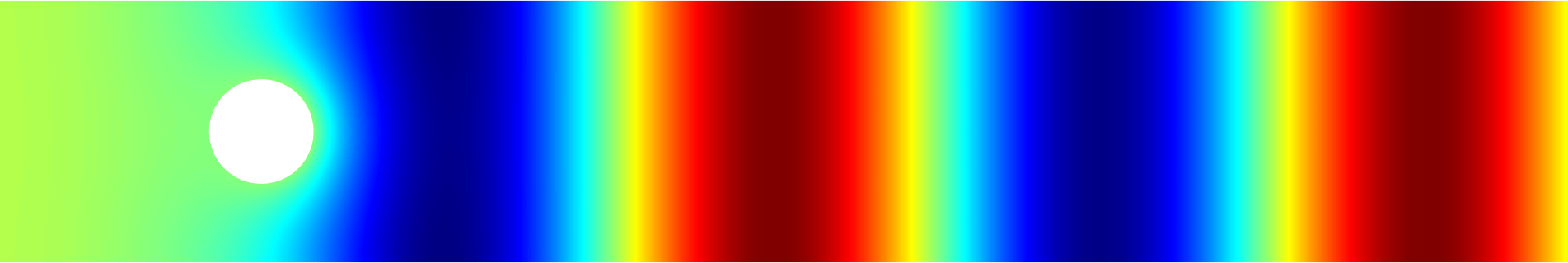}\quad \includegraphics[width=0.48\textwidth]{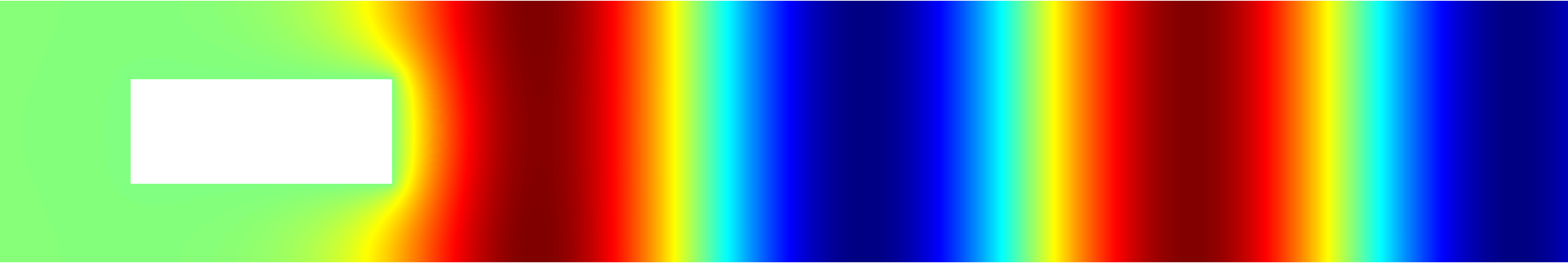}
\caption{Line 1-4: real part of $u_0^\eta$ for, respectively, $\eta=0, 10, 1000, 10^{10}$. Last line: real part of $u_0^\infty$ (solution of the problem with a Dirichlet boundary condition on the obstacle, see (\ref{30})). In the results of the left (resp. right) column, the inclusion $\mathcal{O}$ is a disk (resp. a rectangle). Here the strip is of height one ($\lambda_1=\pi$) and $\lambda=(0.8\pi)^2$ so that only one mode can propagate ($J=1$).\label{Total Field}}
\end{figure}

\begin{figure}[!ht]
\centering
\includegraphics[trim={0.38cm 0.25cm 0cm 0cm},clip,width=0.3\textwidth]{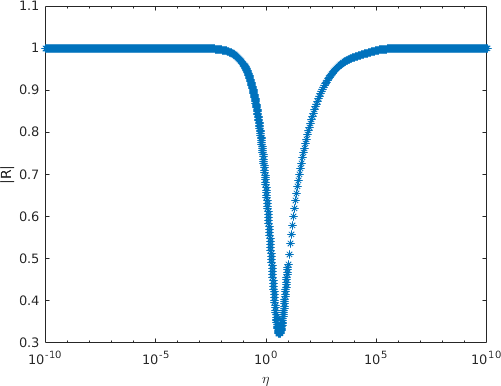}\quad\includegraphics[trim={0.38cm 0.25cm 0cm 0cm},clip,width=0.3\textwidth]{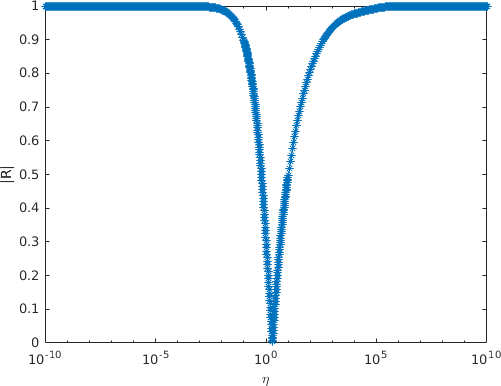}\\[2pt]
\includegraphics[width=0.3\textwidth]{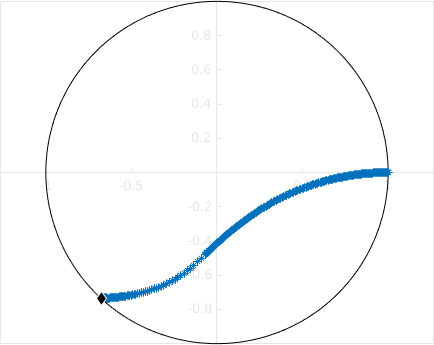}\quad\includegraphics[width=0.3\textwidth]{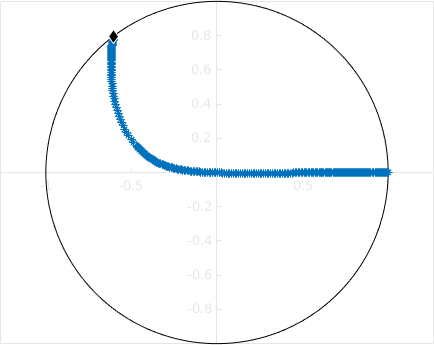}\\[5pt] 
\includegraphics[trim={0.5cm 0.25cm 0cm 0cm},clip,width=0.3\textwidth]{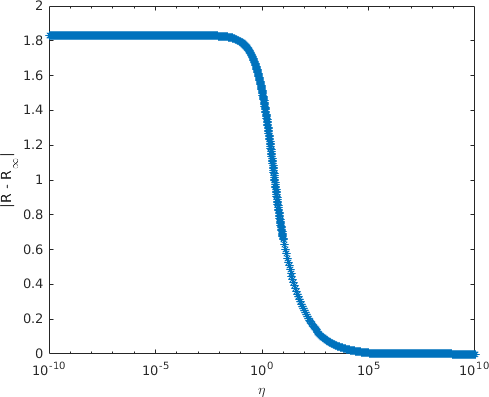}\quad \includegraphics[trim={0.5cm 0.25cm 0cm 0cm},clip,width=0.3\textwidth]{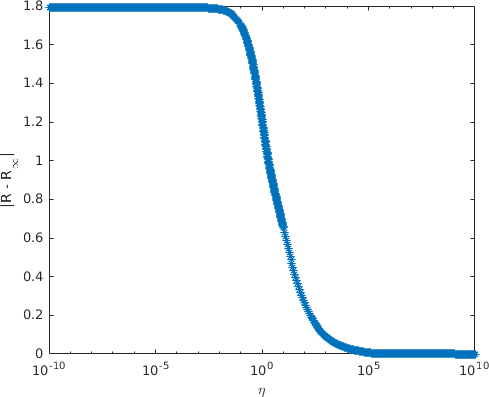} 
\caption{Same setting as in Figure \ref{Total Field} (monomode regime, circular inclusion on the left, rectangular inclusion on the right). Line 1:  $|\mathbb{S}^\eta|$ with respect to $\eta\in(10^{-10};10^{10})$. 
Line 2:  $\mathbb{S}^\eta$ in the complex plane with respect to $\eta\in(10^{-10};10^{10})$. The black diamond on the unit circle represents the value of $\mathbb{S}^\infty$.  
Line 3:  $|\mathbb{S}^\eta-\mathbb{S}^\infty|$ with respect to $\eta\in(10^{-10};10^{10})$.\label{ScatteringCoeff1}}
\end{figure}

\begin{figure}[!ht]
\centering
\includegraphics[trim={1.95cm 1.2cm 1.4cm 0.8cm},clip,width=0.24\textwidth]{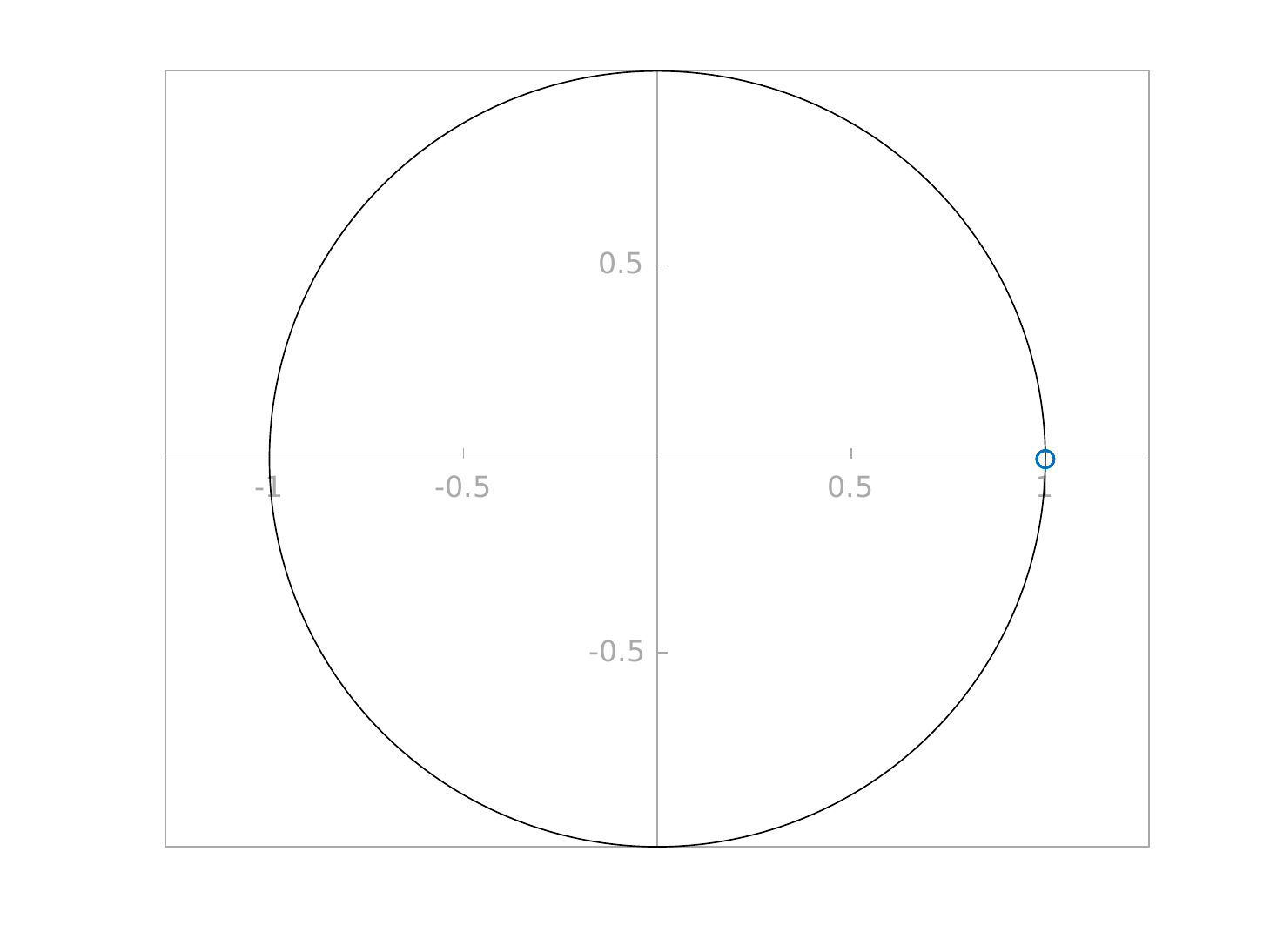}\ \includegraphics[trim={1.95cm 1.2cm 1.4cm 0.8cm},clip,width=0.24\textwidth]{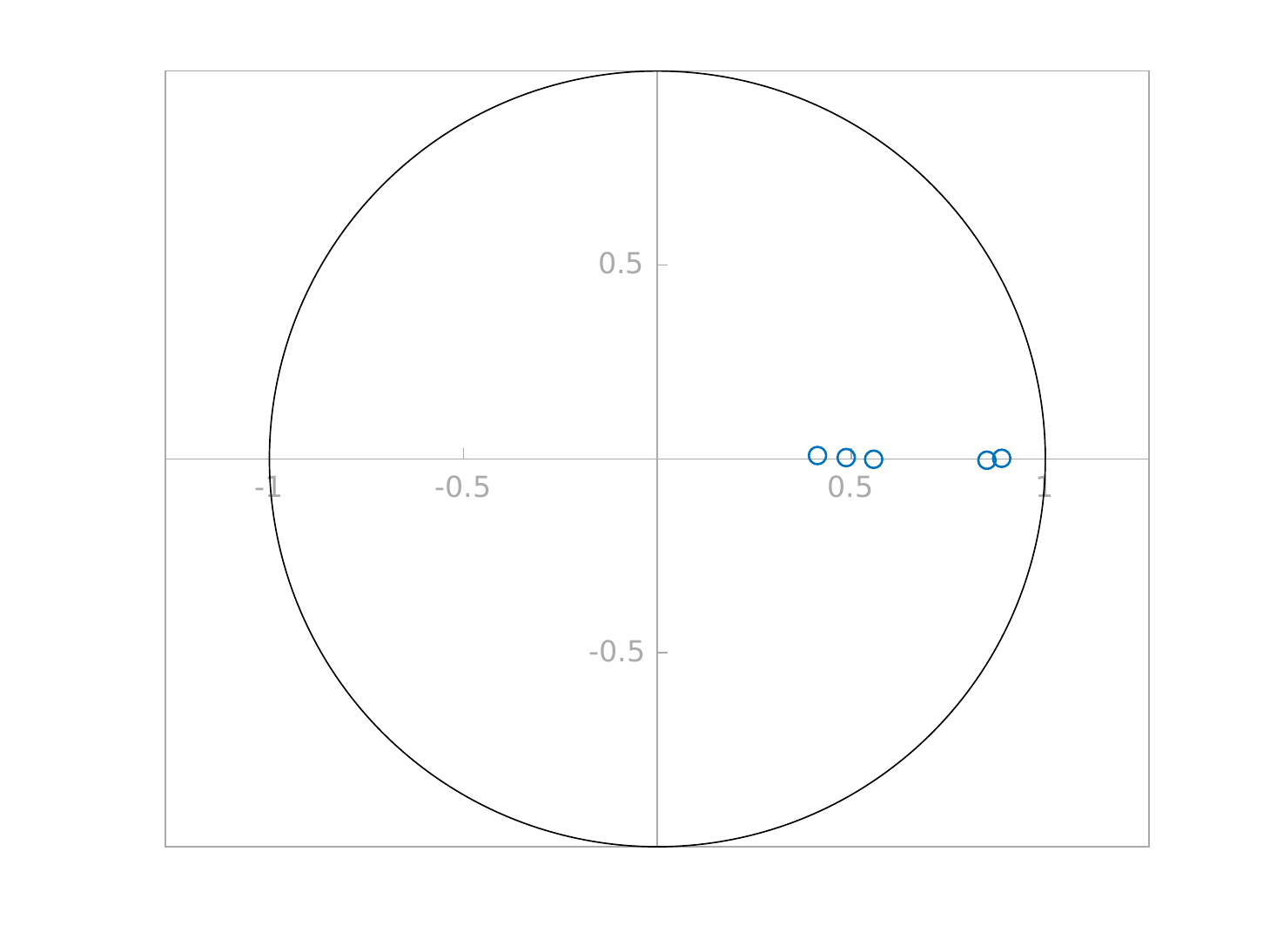}
\ \includegraphics[trim={1.95cm 1.2cm 1.4cm 0.8cm},clip,width=0.24\textwidth]{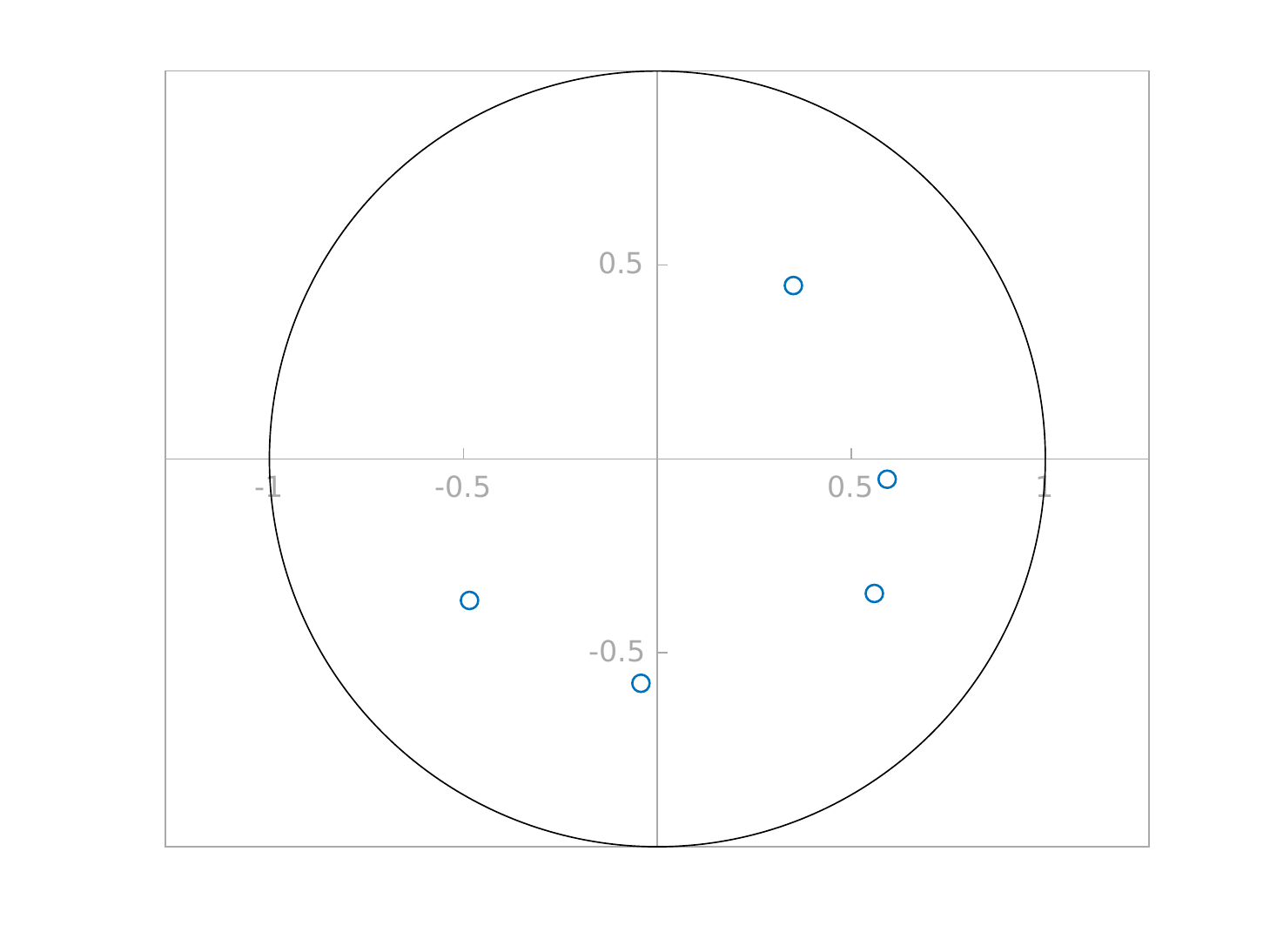}\ \includegraphics[trim={1.95cm 1.2cm 1.4cm 0.8cm},clip,width=0.24\textwidth]{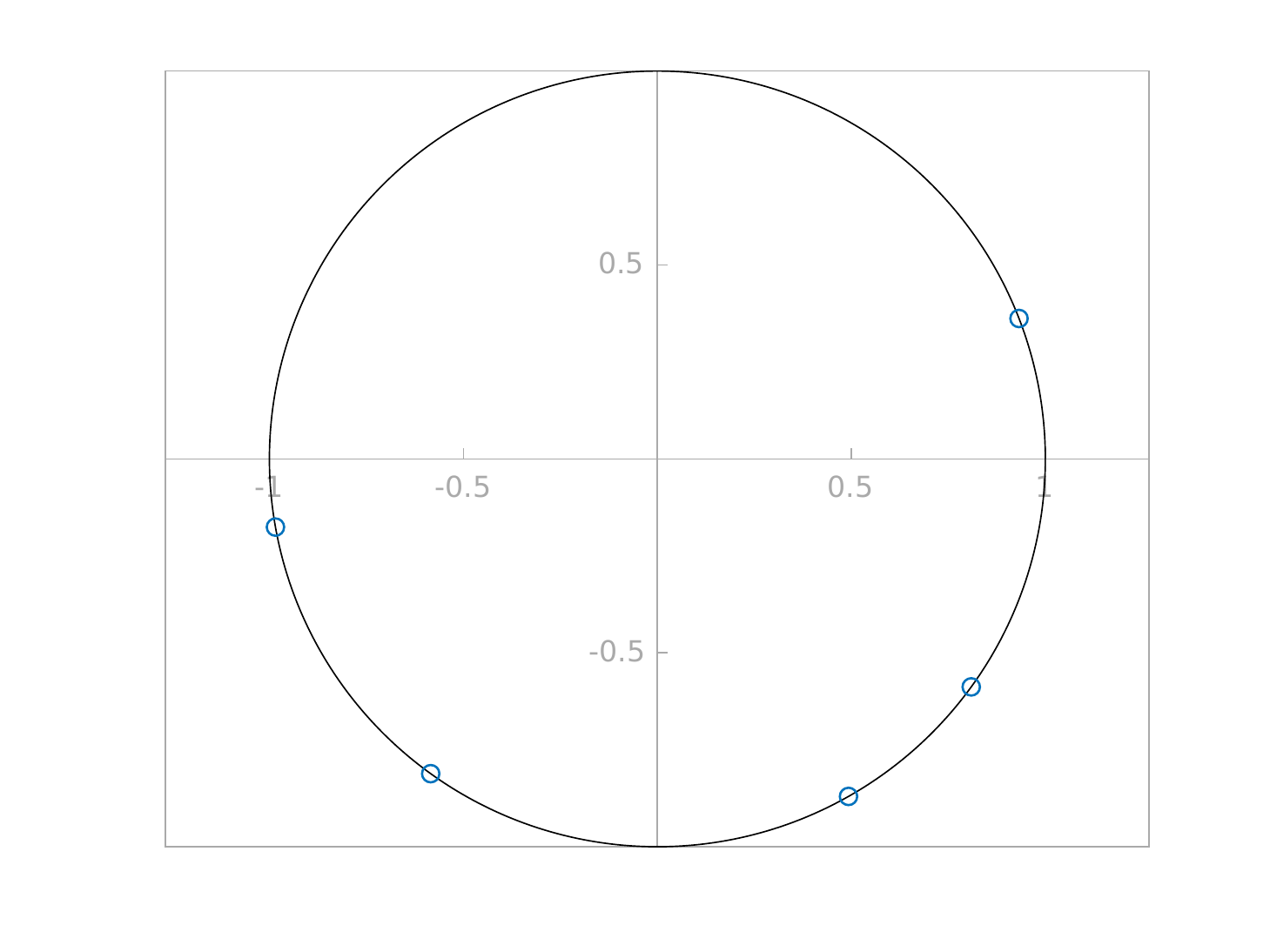}
\caption{Eigenvalues of $\mathbb{S}^\eta$ in the complex plane for $\eta=10^{-10}$, $10^{-1}$, $5$, $10^{10}$ (from left to right). Here the spectral parameter $\lambda=(4.8\pi)^2\in((4\pi)^2;(5\pi)^2)$ has been chosen such that five modes can propagate ($\mathbb{S}^\eta$ is of size $5\times5$). The black circle represents the unit circle.\label{ScatteringCoeffMulti}}
\end{figure}

\newpage

\section{Perfect absorber in 2D}\label{SectionCPA}

In this section, to simplify the presentation we work in 2D, though the ideas would work similarly in 3D, and to set ideas we assume that $\Om$ coincides with $[0;+\infty)\times(0;1)$ for $z\ge0$. In this situation, we have $\lambda_1=\pi^2$. We take $\lambda \in (0;\pi^2)$ ($J=1$) so that only the plane modes $w_0^\pm$ can propagate. As already said, in this circumstance, the scattering matrix reduces to a scalar reflection coefficient that we still denote by $\mathbb{S}^\eta\in\mathbb{C}$. Our goal here is to explain how to exhibit perfect absorbers, that is geometries where $\mathbb{S}^\eta\approx0$. We have seen in item $4^\circ$ of Section \ref{sec8} that the function $\eta\mapsto |\mathbb{S}^\eta|$ always reaches a minimum on $[0;+\infty)$. But for given $\Om$, $\mathcal{O}$, in general this minimum is not zero. What we prove below is that for any choice of $\Om$, $\mathcal{O}$, $\eta>0$, one can perturb the boundary $\partial\Om$ to get a reflection coefficient approximately equal to zero. For numerical studies, we refer the reader to \cite{CGCS10,MTRRP15,RTRMTP16,JRPG17}.\\
\newline
Once for all, we assume that $\Om$, $\mathcal{O}$ and $\eta>0$ are given.  The key idea of the approach will be to glue $\Om$ to a well-designed waveguide for which the reflection coefficient coincides with $\overline{\mathbb{S}^\eta}$. To obtain such a waveguide, we follow an idea proposed in \cite{ChPa18} that we recall now. 

\subsection{Reaching any reflection coefficient}\label{paragraphAnyR}

~\vspace{-0.6cm}

\begin{figure}[!ht]
\centering
\begin{tikzpicture}[scale=1.4]
\draw[fill=gray!30,draw=none](-2,1) rectangle (1,2);
\draw[fill=gray!30,draw=none](-4,1) rectangle (-2,2);
\draw[fill=gray!30,draw=none](-2,1.8) rectangle (-1,3.8);
\draw (-4,2)--(-2,2)--(-2,3.8)--(-1,3.8)--(-1,2)--(1,2); 
\draw (-4,1)--(1,1); 
\draw[dashed] (1,1)--(1.5,1); 
\draw[dashed] (1,2)--(1.5,2);
\draw[dashed] (-4.5,2)--(-4,2);
\draw[dashed] (-4.5,1)--(-4,1);
\draw[dotted,>-<] (-2,4)--(-1,4);
\draw[dotted,>-<] (-0.8,1.95)--(-0.8,3.85);
\node at (-1.5,4.2){\small $\ell$};
\node at (-0.4,2.9){\small $L-1$};
\draw[dashed,line width=0.5mm,gray!80] (-1.5,1)--(-1.5,3.8);
\begin{scope}[xshift=-8cm]
\draw[->] (3,1.2)--(3.6,1.2);
\draw[->] (3.1,1.1)--(3.1,1.7);
\node at (3.65,1.3){\small $z$};
\node at (3.25,1.6){\small $y$};
\end{scope}
\begin{scope}[xshift=-3.6cm,yshift=1.7cm,scale=0.8]
\draw[line width=0.2mm,->] plot[domain=0:pi/4,samples=100] (\x,{0.2*sin(20*\x r)}) node[anchor=west] {\hspace{-2.4cm}$1$};
\end{scope}
\begin{scope}[xshift=-3.6cm,yshift=1.3cm,scale=0.8]
\draw[line width=0.2mm,<-] plot[domain=0:pi/4,samples=100] (\x,{0.2*sin(20*\x r)}) node[anchor=west] {$\hspace{-2.4cm}\mathcal{R}$};
\end{scope}
\begin{scope}[xshift=0cm,yshift=1.5cm,scale=0.8]
\draw[line width=0.2mm,->] plot[domain=0:pi/4,samples=100] (\x,{0.2*sin(20*\x r)}) node[anchor=west] {$\mathcal{T}$};
\end{scope}
\end{tikzpicture}\qquad\begin{tikzpicture}[scale=1.4]
\draw[fill=gray!30,draw=none](-3,0) rectangle (0,1);
\draw[fill=gray!30,draw=none](-0.4,0) rectangle (0,2.8);
\draw (-3,1)--(-0.4,1)--(-0.4,2.8)--(0.02,2.8);
\draw (0.02,0)--(-3,0);
\draw[line width=0.6mm] (0,0)--(0,2.8);
\draw[dashed] (-3.5,1)--(-3,1); 
\draw[dashed] (-3.5,0)--(-3,0); 
\node at (0.3,1.35){\small $\Sigma_{L}$};
\draw[dotted,<->] (-0.5,3)--(0.1,3);
\draw[dotted,<->] (0.7,0)--(0.7,2.8);
\node at (0.9,1.35){\small $L$};
\node at (-0.2,3.2){\small $\ell/2$};
\phantom{\draw[dashed] (0,3.5)--(0,3); }
\begin{scope}[xshift=-2.5cm,yshift=0.7cm,scale=0.8]
\draw[line width=0.2mm,->] plot[domain=0:pi/4,samples=100] (\x,{0.2*sin(20*\x r)}) node[anchor=west] {\hspace{-2.6cm}$1$};
\end{scope}
\begin{scope}[xshift=-2.5cm,yshift=0.3cm,scale=0.8]
\draw[line width=0.2mm,<-] plot[domain=0:pi/4,samples=100] (\x,{0.2*sin(20*\x r)}) node[anchor=west] {$\hspace{-2.6cm}r/R$};
\end{scope}
\end{tikzpicture}
\caption{Geometries of $\mathcal{G}_L$ (left) and $\mathfrak{g}_{L}$ (right). \label{DomainOriginal}} 
\end{figure}
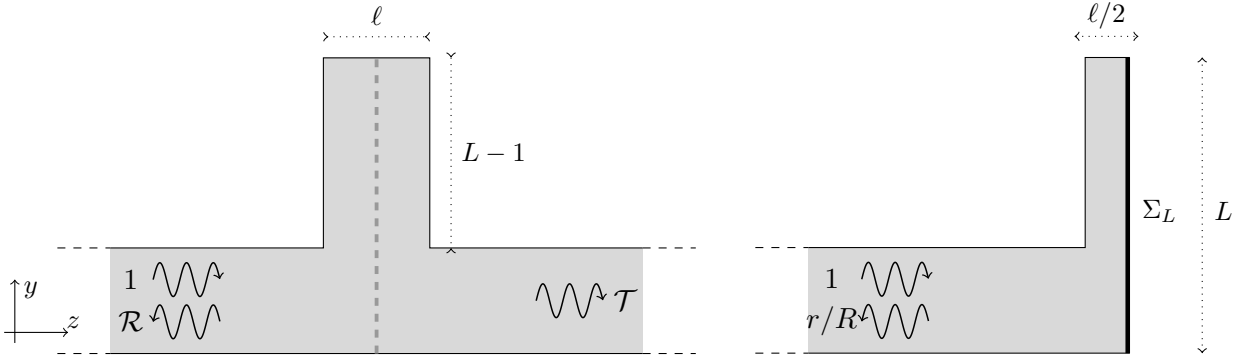

\noindent Set $\ell=\pi/\sqrt{\lambda}$ and for $L>1$, define the waveguide (see Figure \ref{DomainOriginal} left)
\begin{equation}\label{defOriginalDomain}
\mathcal{G}_L:=\{ (y,z)\in (0;1)\times\R\ \cup\  [1;L)\times (-\ell/2;\ell/2)\}. 
\end{equation}
The value $\pi/\sqrt{\lambda}$ for the width of the bounded branch of $\mathcal{G}_L$ is very important in the analysis we develop below. We consider the
dissipationless problem 
\begin{equation}\label{PbInitial}
\begin{array}{|rcll}
\Delta_x v + \lambda v & = & 0 & \mbox{ in }\mathcal{G}_L\\[3pt]
 \partial_nv  & = & 0  & \mbox{ on }\partial\mathcal{G}_L.
\end{array}
\end{equation}

\noindent Let $\chi_l\in\mathscr{C}^{\infty}(\R^2)$ (resp. $\chi_r\in\mathscr{C}^{\infty}(\R^2)$) be a cut-off function equal to one for $z\le -\ell$ (resp. $z\ge\ell$) and to zero for $z\ge -\ell/2$ (resp. $z\le \ell/2$). Problem (\ref{PbInitial}) admits a solution with the expansion
\begin{equation}\label{DefScatteringCoeff}
v= \chi_l\,(w^{+}_{0}+\rcoef\,w^{-}_{0})+\chi_r\,\tcoef\,w^{+}_0+\tilde{v},
\end{equation}
where $\rcoef,\,\tcoef\in\mathbb{C}$ and where $\tilde{v}$ decays exponentially as $O(e^{-\sqrt{\pi^2-\lambda}|z|})$ for $z\to\pm\infty$. The reflection coefficient $\rcoef$ and transmission coefficient $\tcoef$ in (\ref{DefScatteringCoeff}) are uniquely defined and satisfy the relation of conservation of energy $|\rcoef|^2+|\tcoef|^2=1$. They depend on $L$ and below we also write $\rcoef(L)$, $\tcoef(L)$ instead of $\rcoef$, $\tcoef$. For our purpose, we need to study the behaviour of $\rcoef(L)$, $\tcoef(L)$ as functions of $L$. To proceed, we use the symmetry with respect to the $(Oy)$ axis. Define the half-waveguide 
\begin{equation}\label{defHalfWaveguide}
\mathfrak{g}_L:=\{(y,z)\in\mathcal{G}_L\,|\,z<0\}
\end{equation}
(see Figure \ref{DomainOriginal} right). Introduce the problem with Neumann boundary conditions 
\begin{equation}\label{PbChampTotalSym}
\begin{array}{|rcll}
\Delta u +\lambda u & = & 0 & \mbox{ in }\mathfrak{g}_L\\[3pt]
 \partial_nu  & = & 0  & \mbox{ on }\partial\mathfrak{g}_L
\end{array}
\end{equation}
as well as the problem with mixed boundary conditions 
\begin{equation}\label{PbChampTotalAntiSym}
\begin{array}{|rcll}
\Delta U + \lambda U & = & 0 & \mbox{ in }\mathfrak{g}_L\\[3pt]
 \partial_nU  & = & 0  & \mbox{ on }\partial\mathfrak{g}_L\cap\partial\mathcal{G}_L \\[3pt]
U  & = & 0  & \mbox{ on }\Sigma_L:=(0;L)\times\{0\}.
\end{array}
\end{equation}
Problems (\ref{PbChampTotalSym}) and (\ref{PbChampTotalAntiSym}) admit respectively the solutions 
\begin{eqnarray}
\nonumber u &=& w^+_0+\rN\,w^-_0 + \tilde{u},\qquad\hspace{2mm}\mbox{ with }\tilde{u}\in H^1(\mathfrak{g}_L),\\[3pt]
\label{defZetaLanti}U &=& w^+_0+\rD\,w^-_0 + \tilde{U},\qquad\mbox{ with }\tilde{U}\in H^1(\mathfrak{g}_L),
\end{eqnarray} 
where $\rN$, $\rD\in\mathbb{C}$ are uniquely defined. Moreover, due to conservation of energy, one has
\begin{equation}\label{NRJHalfguide}
|\rN|=|\rD|=1.
\end{equation}
Direct inspection shows that if $v$ is a solution of Problem (\ref{PbInitial}) then we have $v(y,z)=(u(y,z)+U(y,z))/2$ in $\mathfrak{g}_L$ and $v(y,z)=(u(y,-z)-U(y,-z))/2$ in $\mathcal{G}_L\setminus\overline{\mathfrak{g}_L}$ (up possibly to a term which is exponentially decaying at $\pm\infty$ if there are trapped modes at the given $\lambda$). We deduce that the scattering coefficients $\rcoef$, $\tcoef$ appearing in the decomposition (\ref{DefScatteringCoeff}) of $v$ are such that
\begin{equation}\label{Formulas}
\rcoef=\frac{\rN+\rD}{2}\qquad\mbox{ and }\qquad \tcoef=\frac{\rN-\rD}{2}.
\end{equation} 
Remark that relations (\ref{Formulas}) rely only on the symmetry of the waveguide, not on the choice (\ref{defOriginalDomain}). Now in the particular geometry considered here, one observes that $U':=w^+_0-w^-_0=i(2/\sqrt{\lambda})^{1/2}\sin(\sqrt{\lambda}z)$ is a solution to (\ref{PbChampTotalSym}) for all $L>1$. Note that this property is based on the fact that $\pi/(2\sqrt{\lambda})$, the width of the bounded branch of $\mathfrak{g}_L$, coincides with the quarter wavelength of the waves $w^{\pm}_0$. By uniqueness of the definition of the coefficient $\rD$ in (\ref{defZetaLanti}), we deduce that 
\begin{equation}\label{PartRelation}
\rD=\rD(L)=-1\qquad\mbox{ for all $L>1$}. 
\end{equation}
On the other hand, it is showed in \cite[\S3.2]{ChNP18} that as $L\to+\infty$, $\rN(L)$ runs continuously on the unit circle $\mathscr{C}(0,1)$ (remember the constraint (\ref{NRJHalfguide})) and that we have $|\rN(L)-\rN^{\mrm{asy}}(L)|\le C\,e^{-c\,L}$ for certain $c,\,C>0$ where $L\mapsto \rN^{\mrm{asy}}(L)$ runs periodically on $\mathscr{C}(0,1)$ with a period of $\pi/\sqrt{\lambda}$. Therefore from (\ref{Formulas}), we infer that as $L\to+\infty$, $\rcoef(L)$ runs continuously on $\mathscr{C}(-1/2,1/2)$, almost periodically with a period of $\pi/\sqrt{\lambda}$. This shows that by tuning $L$, we can get any reflection coefficient of $\mathscr{C}(-1/2,1/2)$. For our analysis below, we need more and wish to attain any point of the unit disk. To proceed, let us define the shifted waveguide 
\begin{equation}\label{defHalfWaveguideShift}
\mathcal{G}^{\sigma}_L:=\{(y,z)\,|\,(y,z-\sigma)\in\mathcal{G}_L\}.
\end{equation}
Then Problem (\ref{PbInitial}) set in $\mathcal{G}^{\sigma}_L$ admits a solution with the expansion 
\begin{equation}\label{DefScatteringCoeffShift}
v^{\sigma}= \chi_l\,(w^{+}_{0}+\rcoef^{\sigma}\,w^{-}_{0})+\chi_r\,\tcoef^{\sigma}\,w^{+}_0+\tilde{v}^{\sigma},
\end{equation}
where $\tilde{v}^{\sigma}$ decay exponentially at infinity. Clearly we have $v^{\sigma}(y,z)= e^{i\sqrt{\lambda}\sigma}v(y,z-\sigma)$, possibly up to some exponentially decaying terms if trapped modes exist, which shows in particular that $\rcoef^{\sigma}=e^{2i\sqrt{\lambda}\sigma}\rcoef$. Therefore this proves that by choosing carefully $L$ and $\sigma$, for $\rcoef^{\sigma}(L)$ we can get any value in the unit disk.

\subsection{Association with the dissipative waveguide}

Let us come back to our dissipative waveguide for which  $\Om$, $\mathcal{O}$, $\eta>0$, and so $\mathbb{S}^\eta$, are given once for all. We want to modify its geometry to create a new device where the reflection coefficient is quasi null. Let us fix $L$ and $\sigma>0$ in (\ref{defHalfWaveguideShift}) such that $\rcoef^{\sigma}$, the reflection coefficient appearing in (\ref{DefScatteringCoeffShift}), satisfies $\overline{\rcoef^{\sigma}}=\mathbb{S}^\eta$. For $\kappa\in\N$, then define the domain 
\begin{equation}\label{DefShitedDomain}
\Om^\kappa:=\Om\cup\mathcal{L}^\kappa\qquad\mbox{ with }\mathcal{L}^\kappa:=[1;L)\times(-\ell/2+2\kappa\pi/\sqrt{\lambda}+\sigma;\ell/2+2\kappa\pi/\sqrt{\lambda}+\sigma).
\end{equation}
The problem 
\[
\begin{array}{|rlcl}
\Delta_x u^\kappa+\lambda \big( 1 + i \eta\,b \big) u^\kappa  &=& 0 & \mbox{ in }\Omega^\kappa\\[3pt]
\partial_nu^\kappa &=& 0  & \mbox{ on }\partial\Omega^\kappa
\end{array}
\]
admits a solution of the form
\[
u^\kappa = w_0^- + R^\kappa\,w_0^+   + \tilde{u}^\kappa,
\]
where as usual $\tilde{u}^\kappa$ decays exponentially at infinity and where $R^\kappa$ is the reflection coefficient of interest. Let us explain why by taking $\kappa\in\N$ large, we can make $R^\kappa$ as small as we wish. To proceed, roughly speaking, we show that when $\kappa\in\N$ tends to infinity, $u^\kappa$ gets closer and closer to some approximation $\tilde{u}^\kappa$ for which the corresponding reflection coefficient is null. Introduce $\zeta$ a smooth cut-off function such that $\zeta=1$ for $z\le 0$ and $\zeta=0$ for $z\ge1$. Then define $v^\kappa$, $\zeta^\kappa$ such that $v^\kappa(y,z)=v^\sigma(y,z-2\kappa\pi/\sqrt{\lambda})$, $\zeta^\kappa(y,z)=\zeta(y,z-\kappa\pi/\sqrt{\lambda})$ and set 
\[
\hat{u}^\kappa:=(\zeta^\kappa u^\eta+(1-\zeta^\kappa)\overline{v^\kappa}\,)/\overline{\tcoef^\sigma}
\]
(here $u^\eta$ is the one introduced in (\ref{08})). Note that since $\eta>0$, there holds $|\rcoef^\sigma|=|\mathbb{S}^\eta|<1$ and so $\tcoef^\sigma\ne0$. At this stage, we need to comment on this choice for $\hat{u}^\kappa$. First observe that in the transition region $\kappa\pi/\sqrt{\lambda}\le z \le \kappa\pi/\sqrt{\lambda}+1$, the main parts of $u^\eta$ and $\overline{v^\kappa}$ are the same and coincide with $w^{+}_{0}+\mathbb{S}^\eta w^{-}_{0}$. This is due to the fact that the behaviour of $\hat{u}^\kappa$ at $+\infty$ matches the one of $\overline{v^\kappa}$ at $-\infty$. Second at $+\infty$, we have $\hat{u}^\kappa=w^{-}_{0}+\dots$ where ellipsis stand for exponentially decaying terms (the reflection coefficient for $\hat{u}^\kappa$ is null). Therefore we find that $E^\kappa:=u^\kappa-\hat{u}^\kappa$ is an outgoing function which solves 
\begin{equation}\label{pbKappa}
\begin{array}{|rlcl}
\Delta_x E^\kappa+\lambda \big( 1 + i \eta\,b \big) E^\kappa  &=& F^\kappa & \mbox{ in }\Omega^\kappa\\[3pt]
\partial_nE^\kappa &=& 0  & \mbox{ on }\partial\Omega^\kappa,
\end{array}
\end{equation}
where the discrepancy $F^\kappa$ is given by 
\[
F^\kappa=-\left(2\nabla \zeta^\kappa\cdot\nabla (u^\eta-\overline{v^\kappa})+\Delta \zeta^\kappa(u^\eta -\overline{v^\kappa})\right)/\overline{\tcoef^\sigma}.
\]
Using that $u^\eta$ and $\overline{v^\kappa}$ have the same propagating behaviours on the supports of $\nabla \zeta^\kappa$, $\Delta \zeta^\kappa$, one finds that the $L^2$ norm of $F^\kappa$ tends to zero as $\kappa(\in\N)\to+\infty$. From an additional result of uniform stability for the solution of Problem (\ref{pbKappa}) with respect to the source term as $\kappa\to+\infty$, finally we deduce that the scattering coefficient of $E^\kappa$, which is equal to $R^\kappa$, tends to zero as $\kappa(\in\N)\to+\infty$. We emphasize that this stability result is not trivial to establish but can be derived by adapting the approach presented in \cite[Ch.\,5,\,\S5.6,\,Thm. 5.6.3]{MaNaPl}. This shows that we can create dissipative guides for which the reflection coefficient in monomode regime is as small as we wish.

\subsection{Numerical illustrations and comments}

Let us give numerical examples of perfect absorbers to illustrate the method. The tools are the same as in Section \ref{SectionNumerics}. In practice, for given $\Om$, $\mathcal{O}$, $\eta>0$, we start by computing $\mathbb{S}^\eta$. Then we find via an explicit calculus values of $\sigma$ such that the circle $e^{-2i\sqrt{\lambda}\sigma}\mathscr{C}(-1/2,1/2)$ in the complex plane passes through $\mathbb{S}^\eta$. If $\mathbb{S}^\eta=\alpha e^{i\beta}$ for some $\alpha\in[0;1)$ and $\beta\in[0;2\pi)$, one solution is to take
\[
\sigma=\cfrac{\arccos(-\alpha)-\beta}{2\sqrt{\lambda}}+k\pi/\sqrt{\lambda},\qquad k\in\mathbb{Z}.
\]
Then we fix $\kappa\in\N$ large enough in the definition of $\Om^\kappa$ in (\ref{DefShitedDomain}) so that the resonator  of $\Om$ and the bounded branch $\mathcal{L}^\kappa$ are separated. Note that since the evanescent modes decay exponentially, we do not need to take a large $\kappa$. Finally we vary $L$ and identify the interesting geometries by looking at the peaks of the curve $L\mapsto -\ln|R^\kappa(L)|$.\\

\begin{figure}[!ht]
\centering
\includegraphics[width=0.45\textwidth]{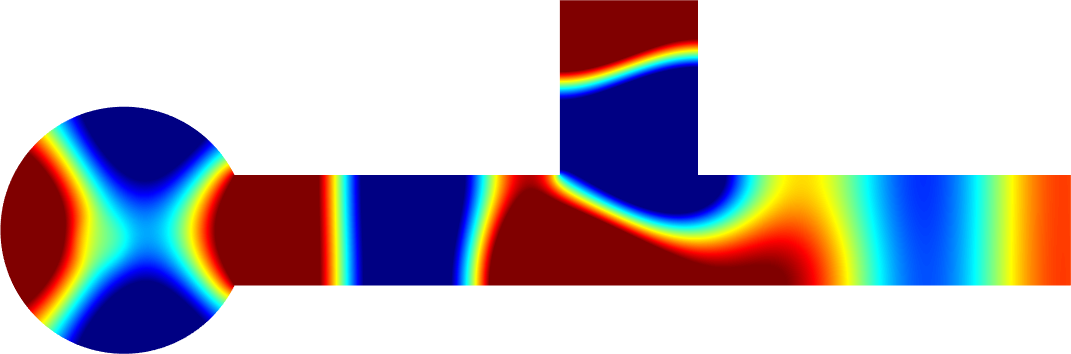}\qquad\includegraphics[width=0.45\textwidth]{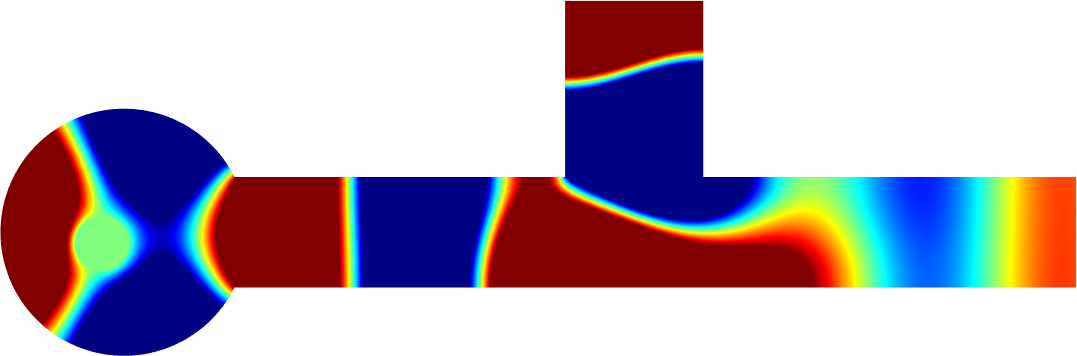}\\[2pt]
\includegraphics[width=0.45\textwidth]{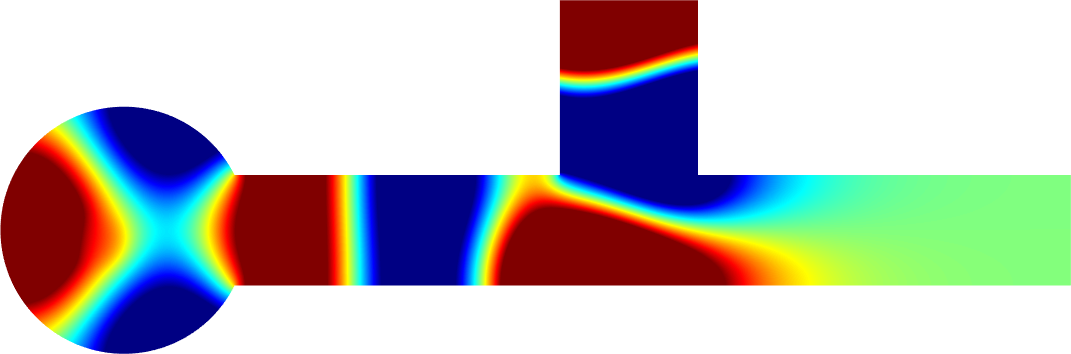}\qquad\includegraphics[width=0.45\textwidth]{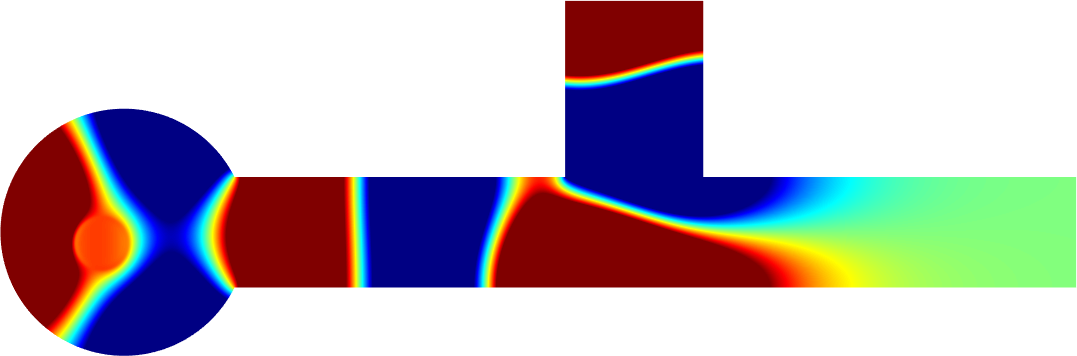}
\caption{$\Re e\,u^\kappa$ (first line) and $\Re e\,(u^\kappa-w_0^-)$ (second line) in two situations where $R^\kappa\approx0$. We take $\eta=10$ for the left pictures and $\eta=1000$ for the right pictures.\label{LargeWidth}}
\end{figure}

\begin{figure}[!ht]
\centering
\includegraphics[trim={1.6cm 1.2cm 1.2cm 0.8cm},clip,width=0.42\textwidth]{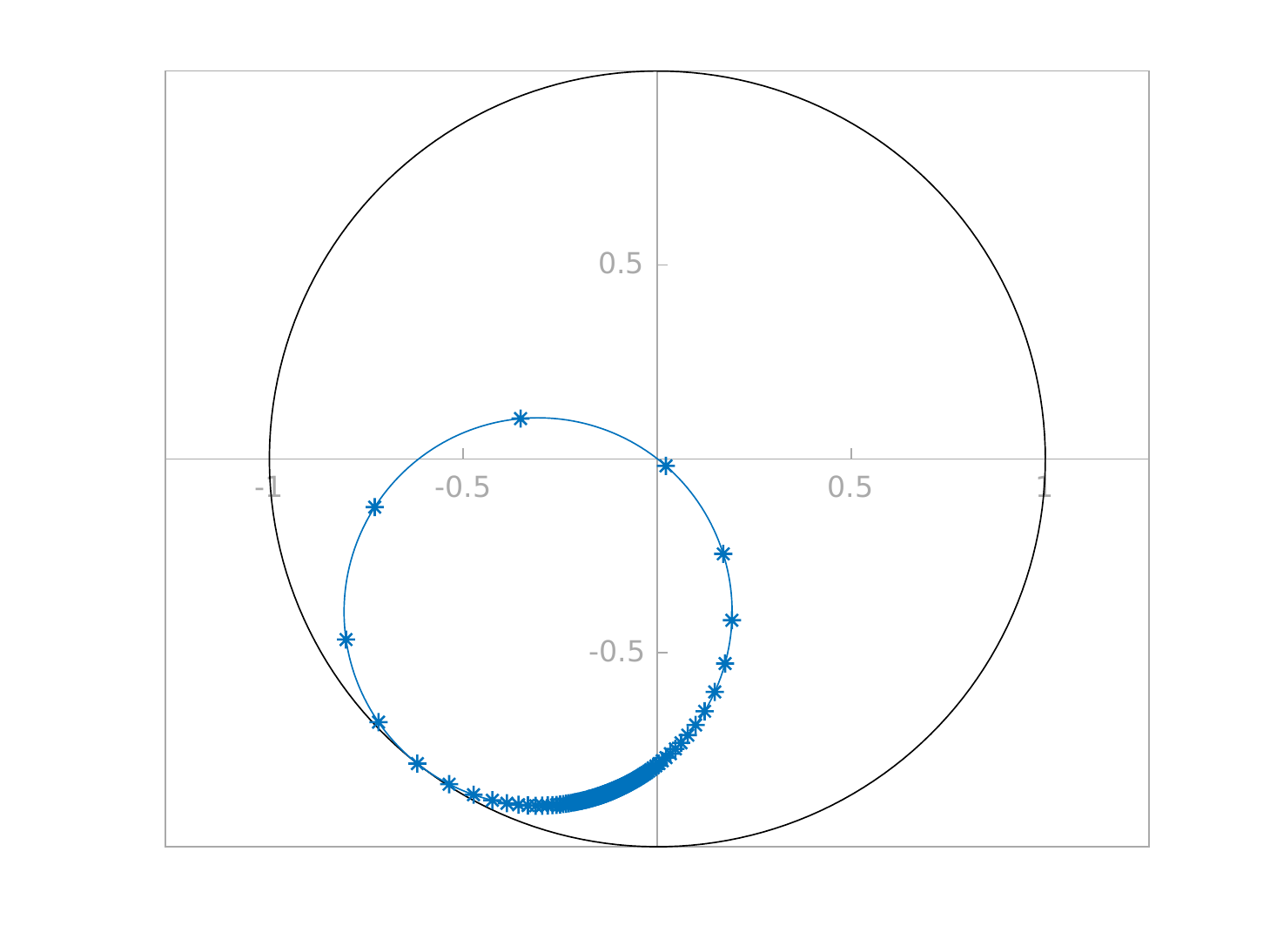}\qquad\includegraphics[trim={1.6cm 1.2cm 1.2cm 0.8cm},clip,width=0.42\textwidth]{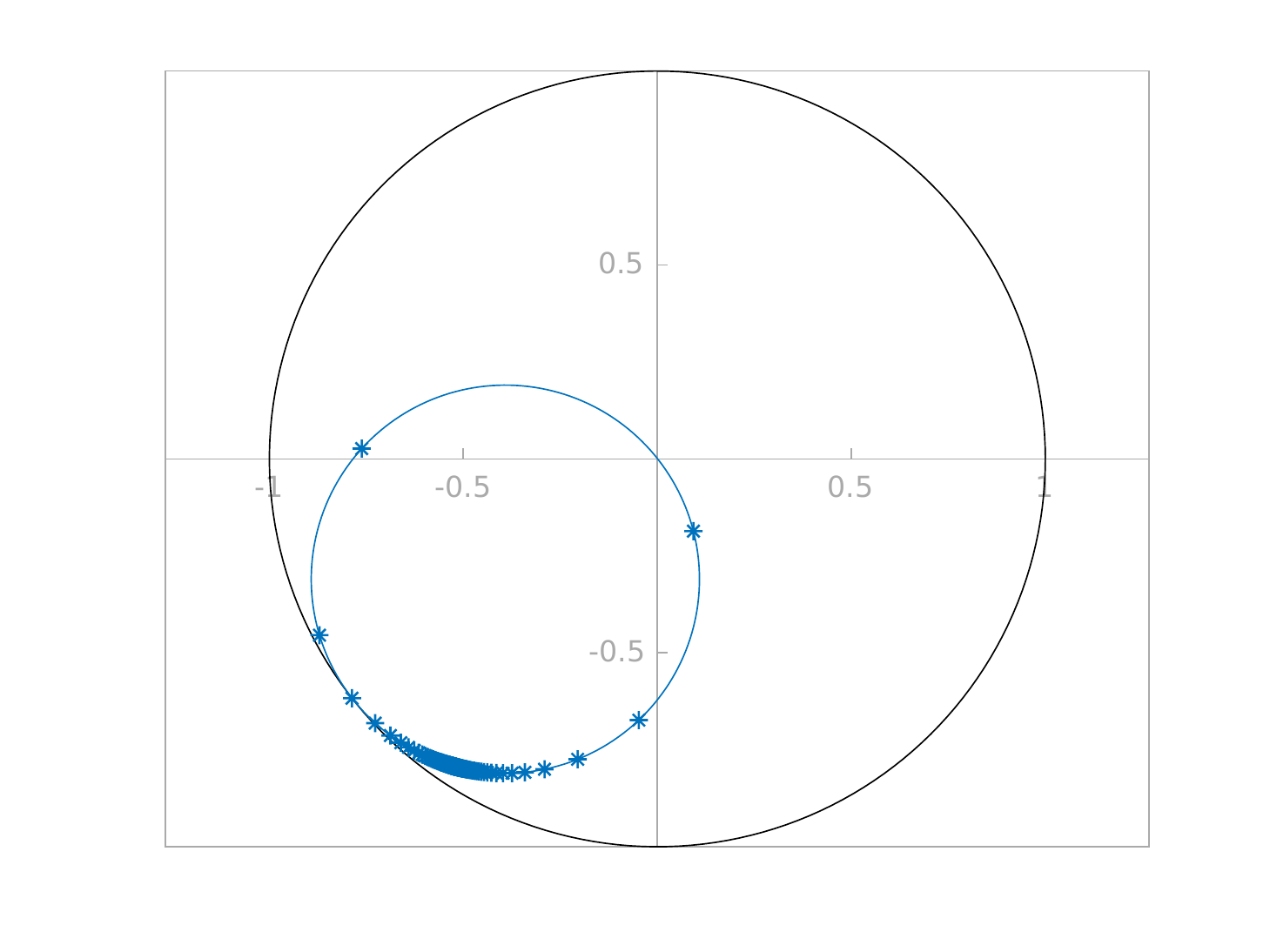}
\caption{Curve $L\mapsto R^\kappa(L)$ in the complex plane for $L\in[0.5;1.75]$. The geometry is the same as in Figure  \ref{LargeWidth}. For the left (resp. right) picture, we take $\eta=10$ (resp. $\eta=1000$).  The black circle represents the unit circle.\label{LargeScattering}}
\end{figure}

\noindent In Figure \ref{LargeWidth}, we display two situations where $R^\kappa\approx0$. We indeed observe that the scattering field $u^\kappa-w_0^-$ is approximately exponentially decaying at infinity. Each column corresponds to a different value of $\eta$. For $\eta=1000$, the dissipative inclusion behaves mainly as a Dirichlet obstacle (see again Theorem \ref{thT2}) so that $|\mathbb{S}^\eta|$ is closed to one. In this situation, it is harder to remove the reflection. In our approach, we observe that the tuning of $L$ becomes more delicate, the device is less robust to geometric perturbations. In Figure \ref{LargeScattering}, we present the curves $L\mapsto R^\kappa(L)$ in the complex plane. The lost of robustness for large $\eta$ translates into a rapid passage of $L\mapsto R^\kappa(L)$ through zero.

\noindent In Figures \ref{ThinWidth}, \ref{ThinScattering}, we use a geometry different from the one introduced in \S\ref{paragraphAnyR} to get a dissipationless waveguide with a prescribed reflection coefficient. More precisely, we replace the bounded branch of width $\ell$ appearing in the definition of $\mathcal{G}_L$ (see (\ref{defOriginalDomain})) by a thin ligament. It is shown in \cite[\S4.1]{ChHN22} that when the length of this thin ligament varies slightly around its resonance lengths $\pi(m+1/2)/\sqrt{\lambda}$, $m\in\N$, the corresponding reflection coefficient approximately runs on $\mathscr{C}(-1/2,1/2)$. Shifting the geometry in the $z$ direction as we did in \S\ref{paragraphAnyR} then allows one to obtain approximately any reflection coefficient in the unit disk. Let us mention that a thin ligament of fixed length is resonant at a given wavenumber and generates large reflections only for tight bands of wavenumbers. Due to this property, a collection of ligaments of different lengths act separately at first order. In other words, their actions decouple. This is interesting to construct devices which perfectly absorb the energy of incident plane waves 
at different wavenumbers $\lambda_1,\dots,\lambda_N\in (0;\pi^2)$. Note that this would not be doable with the bounded branches of \S\ref{paragraphAnyR} whose actions do not decouple. Finally let us mention that the two mechanisms we propose above to get any reflection of $\mathscr{C}(-1/2,1/2)$ need to be adapted to consider Dirichlet boundary conditions. The rest of the analysis would be completely similar but this first step requires to be studied (for example the thin ligament would have to be replaced by another type of resonator).

\begin{figure}[!ht]
\centering
\includegraphics[width=0.45\textwidth]{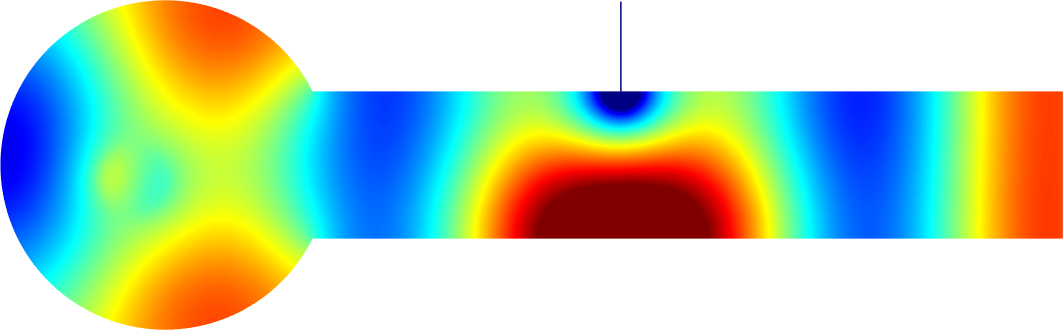}\qquad\includegraphics[width=0.45\textwidth]{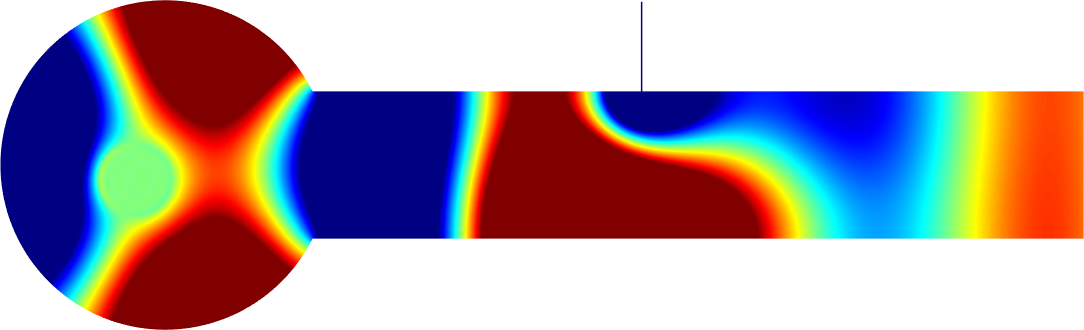}\\[2pt]
\includegraphics[width=0.45\textwidth]{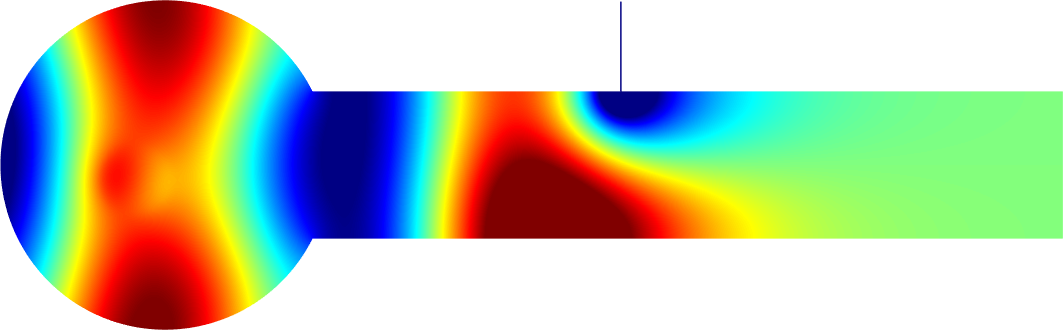}\qquad\includegraphics[width=0.45\textwidth]{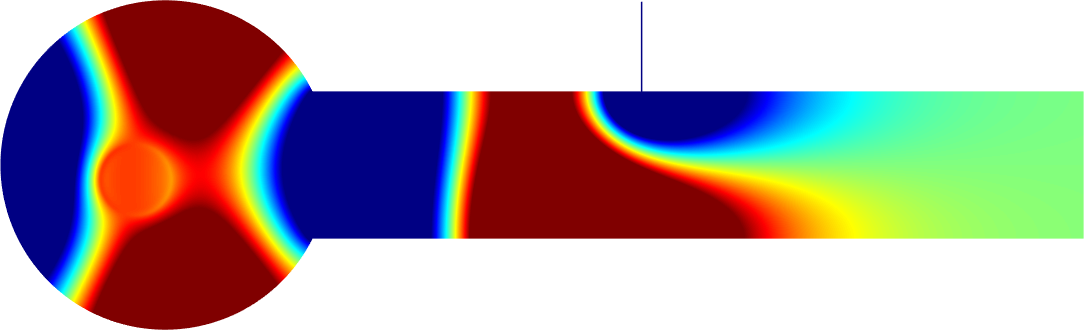}
\caption{$\Re e\,u^\kappa$ (first line) and $\Re e\,(u^\kappa-w_0^-)$ (second line) in two situations where $R^\kappa\approx0$. We take $\eta=10$ for the left pictures and $\eta=1000$ for the right pictures.\label{ThinWidth}}
\end{figure}

\begin{figure}[!ht]
\centering
\includegraphics[trim={1.6cm 1.2cm 1.2cm 0.8cm},clip,width=0.4\textwidth]{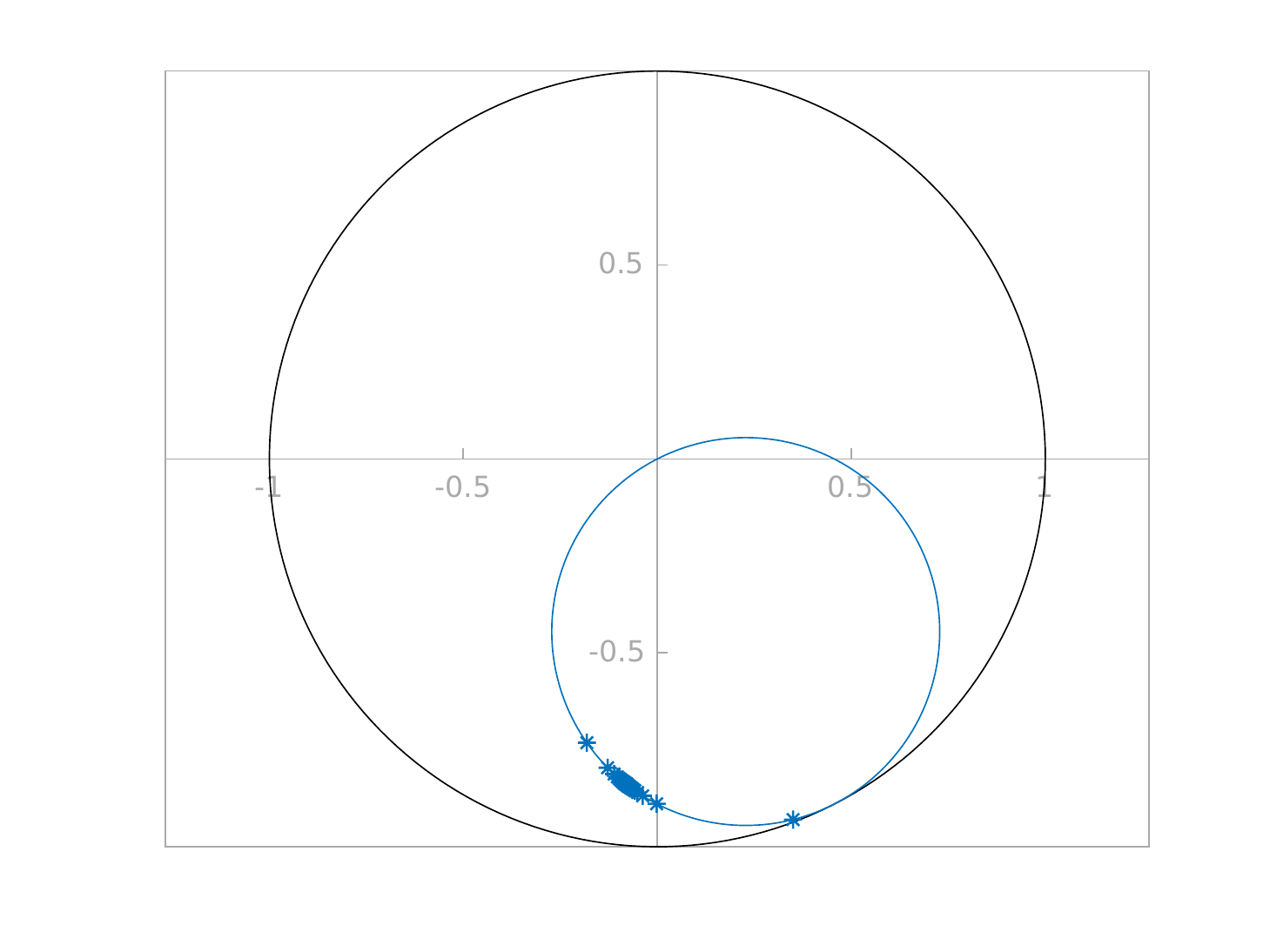}\qquad\includegraphics[trim={1.6cm 1.2cm 1.2cm 0.8cm},clip,width=0.42\textwidth]{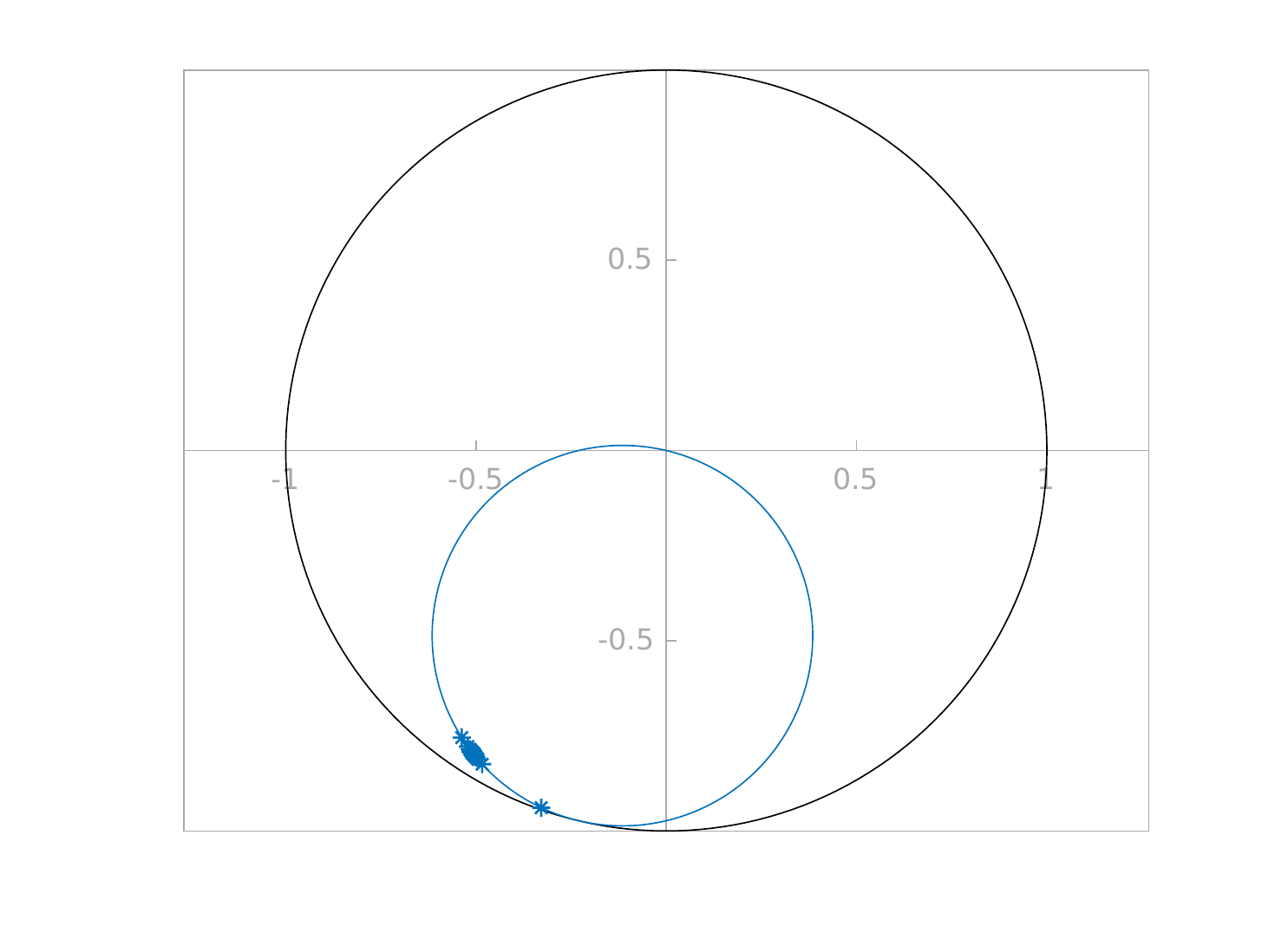}
\caption{Curve $L\mapsto R^\kappa(L)$ in the complex plane for $L\in[0.125;1.125]$. The geometry is the same as in Figure  \ref{ThinWidth}. For the left (resp. right) picture, we take $\eta=10$ (resp. $\eta=1000$).  The black circle represents the unit circle.\label{ThinScattering}}
\end{figure}

\newpage 

\section{Appendix}  \label{sec9}
In this appendix, we give the proof of Theorem \ref{thT3}. To proceed, we start by presenting a few intermediate results.\\
\newline
$1^\circ$. Hardy-type inequality. For $U \in \mathscr{C}_0^\infty([0;+\infty))$
and $\varepsilon > 0$, we can write
\[
\begin{array}{rcl}
\dsp\int_{0}^{+ \infty} ( r + \varepsilon)^{-2} |U(r)|^2 \,dr & = &
\dsp\int_{0}^{+ \infty} ( r + \varepsilon)^{-2} |U(0)|^2 \,dr 
+ 2 \int_{0}^{+ \infty} ( r + \varepsilon)^{-2}
\int_{0}^r \frac{dU(t)}{dt}U(t)\,dt dr \\[10pt]
& \le &\dsp \frac1{	\varepsilon} |U(0)|^2 +
2 \int_{0}^{+ \infty}  \Big| \frac{dU(t)}{dt} \Big| |U(t)|
\int_{t}^{+ \infty}  ( r + \varepsilon)^{-2} \,dr dt \\[10pt]
& \le &\dsp \frac1{\varepsilon} |U(0)|^2 +
2\int_{0}^{+ \infty}  \Big| \frac{dU(t)}{dt} \Big|(t+ \varepsilon)^{-1} 
|U(t)|\,dt \\[10pt]
& \le &\dsp \frac1{\varepsilon} |U(0)|^2 +
2 \int_{0}^{+ \infty}  \Big| \frac{dU(t)}{dt} \Big|^2  \,dt 
+ \frac12 \int_{0}^{+ \infty}  (t+ \varepsilon)^{-2}  |U(t)|^2\,dt .
\end{array}
\]
Thus, we obtain
\begin{equation}\label{a1} 
\int_{0}^{+ \infty} ( r + \varepsilon)^{-2} |U(r)|^2 \,dr 
\leq \frac2{\varepsilon} |U(0)|^2 +
4 \int_{0}^{+ \infty}  \Big| \frac{dU(r) }{dr} \Big|^2  \,dr .
\end{equation}
This inequality holds also for $\varepsilon= 0$ in the case $U(0)=0$. Then it coincides with the traditional Hardy inequality.\\
\newline
$2^\circ$. Auxiliary problems. The analysis of Problem (\ref{29}) with mixed boundary conditions leads to consider the following variational formulation
\bea\label{a2}
\begin{array}{|l}
\,\mbox{Find }u^\infty\in\mathring{W}^{\mrm{out}}(\Omega_\bullet)\mbox{ such that }\\[4pt]
\,( \nabla_x u^\infty, \nabla_x \varphi )_{\Omega_\bullet} -
\lambda ( u^\infty, \varphi )_{\Omega_\bullet} 
= F (\varphi),\qquad \forall\varphi \in \mathscr{C}^\infty_0(\Omega_\bullet), 
\end{array}
\eea 
where $F \in \mathring{W}^1_{-\beta}(\Omega_\bullet)^* $. Here $\mathring{W}^1_{\pm\beta}(\Omega_\bullet)$ (resp. $\mathring{W}^{\mrm{out}}(\Omega_\bullet)$) denotes the subspace of $W^1_{\pm\beta}(\Omega_\bullet)$ (resp. $W^{\mrm{out}}(\Omega_\bullet)$) made of functions vanishing on $\partial \mathcal{O}$. The norms in these spaces are defined as in (\ref{160}) (resp. (\ref{18})) with $\Om$ replaced by $\Om_\bullet$. In particular, for $u^\infty = \chi\sum_{j=0}^{J-1} a_j w_j^+  + \tilde{u}^\infty$, we have 
\bea\label{a3}
\Vert u^\infty ; \mathring{W}^{\mrm{out}}(\Omega_\bullet) \Vert = \big( |a|^2 
+ \Vert \tilde{u}^\infty ; W_\beta^1(\Omega_\bullet) \Vert^2 \big)^{1/2}.
\eea 
The Hardy inequality \eqref{a1} with $\varepsilon=0$, integrated 
over the surface $\partial \mathcal{O}$, together with the Dirichlet condition $u^\infty = \tilde{u}^\infty = 0$ on $\partial \mathcal{O}$, imply that the norm \eqref{a3} is equivalent to the norm 
\bea\label{a4}
\big\Vert u^\infty ; \mathring{W}^{\mrm{out}}_\gamma(\Omega_\bullet)\big\Vert=\big( |a|^2 + \Vert \varrho^\gamma e^{\beta z } 
\nabla_x \tilde{u}^\infty ; L^2(\Omega_\bullet) \Vert^2 
+ \Vert \varrho^{\gamma -1} e^{\beta z } 
\tilde{u}^\infty ; L^2(\Omega_\bullet) \Vert^2  
\big)^{1/2} 
\eea 
for $\gamma=0$. Here $\varrho$ is the distance function introduced in \eqref{47}. For the analysis below, we also introduce the space $\mathring{W}_{\beta,\gamma}^{1}(\Omega_\bullet)$ defined as the completion of $\mathscr{C}^\infty_0(\Om_\bullet)$ for the norm
\bea\label{a4bis}
\big\Vert \varphi ; \mathring{W}^{1}_{\beta,\gamma}(\Omega_\bullet)\big\Vert=\big(\Vert \varrho^\gamma e^{\beta z } 
\nabla_x \varphi ; L^2(\Omega_\bullet) \Vert^2 
+ \Vert \varrho^{\gamma -1} e^{\beta z } 
\varphi ; L^2(\Omega_\bullet) \Vert^2  
\big)^{1/2}. 
\eea 
In the following, we shall need to work with the following problem
\bea\label{a5}
\begin{array}{|rccl}
\multicolumn{4}{|l}{\,\mbox{Find }v^\eta\in \mathring{H}^1(\Xi)\mbox{ such that }}\\[3pt]
\, - \Delta_x v^\eta  - \lambda( 1+ i\eta\,b ) v^\eta &=& f & \mbox{ in }\Xi, 
\end{array}
\eea 
where $\Xi$ is a bounded domain such that $\overline \Xi \subset 
\Omega$ and $\overline{\mathcal{O}}  \subset \Xi$. Additionally we will choose the ``width'' of the annulus $\Xi_\bullet = \Xi\setminus\overline{\mathcal{O}}$ small enough so that the first eigenvalue $\mu_0$ of the 
mixed boundary value problem
\bea\label{a6}
\begin{array}{|l}
- \Delta_x v = \mu v\mbox{ in }\Xi_\bullet\\ [3pt]
\,v = 0 \mbox{ on }\partial\Xi,\qquad\partial_n v = 0\mbox{ on }\partial\mathcal{O},  
\end{array}
\eea
is larger than $\lambda$ (note that this is always possible). In (\ref{a5}),  $\mathring{H}^1(\Xi)$ denotes the subspace of $H^1(\Xi)$ made of functions vanishing on $\partial\Xi$. The variational formulation of (\ref{a5}) writes 
\bea\label{a7}
\begin{array}{|rccl}
\multicolumn{4}{|l}{\,\mbox{Find }v^\eta\in \mathring{H}^1(\Xi)\mbox{ such that }}\\[3pt]
\,\mathfrak{a}_\Xi(v^\eta,\phi)=F(\phi),\qquad\forall \phi\in \mathring{H}^1(\Xi),
\end{array}
\eea
where $\mathfrak{a}_\Xi(v^\eta,\phi)=(\nabla_x v^\eta, \nabla_x \phi )_\Xi -\lambda( v^\eta, \phi )_\Xi- i\eta ( b v^\eta, \phi )_\mathcal{O} $ and $F \in \mathring{H}^1(\Xi)^* $. Using the Lax-Milgram lemma, one can show that Problem (\ref{a7}) is uniquely solvable. In particular, to show the coercivity of $\mathfrak{a}_\Xi(\cdot,\cdot)$, one can write, for all $v\in \mathring{H}^1(\Xi)$, 
\[
\Re e\, \mathfrak{a}_\Xi(v,v) = \Vert \nabla_x v ; L^2(\Xi) \Vert^2 - 
\lambda \Vert v ; L^2(\Xi) \Vert^2,\qquad \Im m \,  \mathfrak{a}_\Xi(v,v) \geq  c_b\,\eta\,\Vert  v ; L^2(\mathcal{O}) \Vert^2,\quad c_b > 0,
\]
and use the fact that $\Vert \nabla_x v ; L^2(\Xi_\bullet) \Vert^2  \geq \mu_0 \Vert  ;L^2(\Xi_\bullet) \Vert^2$ with $\mu_0 > \lambda$, to get 
\begin{equation}\label{estima1}
|\mathfrak{a}_\Xi(v,v)| \geq c\,\big( \Vert  v; H^1(\Xi) \Vert^2 
+ \eta \Vert v ; L^2(\mathcal{O}) \Vert^2 \big),\qquad  \forall\eta \ge \eta_0 > 0.
\end{equation}
This estimate together with the trace inequality \eqref{55} imply
\bea\label{a9}
|\mathfrak{a}_\Xi(v,v) | \ge c\,\sqrt{\eta}\,\Vert v ; L^2(\partial \mathcal{O} ) \Vert^2 .
\eea
Using (\ref{a9}) in the Hardy inequality \eqref{a1} with $\eps= \eta^{-1/2}  > 0$, we obtain 
\bea\label{a10}
|\mathfrak{a}_\Xi(v,v) | \ge c\,\Vert (\varrho + \eta^{-1/2})^{-1}v ; L^2(\Xi_\bullet ) \Vert^2 .
\eea
Finally, combining the above relations leads to the following estimate for the solution $v^\eta\in \mathring{H}^1(\Xi)$ of the problem \eqref{a7}
\bea\label{a11}
\big\Vert v^\eta ; \mathring{H}_{\eta,0}^{1} (\Xi) \big\Vert 
\leq c\,\big\Vert F; \mathring{H}_{\eta,0}^{1} (\Xi)^*  \big\Vert, 
\eea
where the constant $c>0$ is independent of the functional $F\in \mathring{H}^1(\Xi)^*$ and of $ \eta \geq \eta_0 > 0$. Here the space 
$\mathring{H}_{\eta,\gamma}^{1} (\Xi)$, with the new weight index $\gamma \in \bbR$, stands for the Sobolev space $\mathring{H}^1(\Xi)$ endowed with the norm
\begin{equation}\label{a12}
\begin{array}{rcl}
\big\Vert \varphi ; \mathring{H}_{\eta,\gamma}^{1} (\Xi) \big\Vert &=& \big( 
\ \Vert (\varrho + \eta^{-1/2})^\gamma\nabla_x \varphi ; L^2(\Xi_\bullet ) \Vert^2
+ \eta^{- \gamma} \Vert \nabla_x \varphi ; L^2(\mathcal{O} ) \Vert^2  \\[3pt]
& &\phantom{(c}\Vert (\varrho + \eta^{-1/2})^{\gamma-1}\varphi ; L^2(\Xi_\bullet ) \Vert^2
+ \eta^{1- \gamma} \Vert \varphi ; L^2(\mathcal{O} ) \Vert^2 \big)^{1/2}.
\end{array} 
\end{equation}

\noindent $3^\circ$. Varying the weight index. Let us show that the estimate (\ref{a11}) is still valid for an open interval of indices $\gamma$ centered at zero. Define the weight functions $R_{\pm \gamma}^\eta$ such that 
\begin{equation}\label{a14}
R_{\pm \gamma}^\eta (x) = 
\begin{array}{|ll}
\,(\varrho(x) + \eta^{-1/2} )^{\pm \gamma} & \mbox{ in }\Xi_\bullet \\
\,\eta^{\mp \gamma/2} & \mbox{ in }\mathcal{O}. 
\end{array}    
\end{equation} 
Note that $R_{\pm \gamma}^\eta$ are piecewise smooth functions which are continuous at $\partial\mathcal{O}$. Set 
\bea\label{a13}
V^\eta = R_{\gamma}^\eta v^\eta, \qquad \Phi = R_{-\gamma}^\eta \phi.  
\eea
Observe that $V^\eta$, $\Phi$ belong to $\mathring{H}^1(\Xi)$ and that we have $\nabla_x R_{\pm \gamma}^\eta=0$ in $\mathcal{O}$ as well as
\begin{equation}\label{EstimDeriv}
\begin{array}{rcl}
|\nabla_x R_{\pm \gamma}^\eta (x) &\leq& |\gamma| \,\big( \varrho(x) + \eta^{-1/2} \big)^{ \pm \gamma -1} |\nabla_x \varrho(x) |  \\[5pt]
& \le & C |\gamma|\, R_{\pm \gamma}^\eta (x) \,\big( \varrho(x) + \eta^{-1/2} \big)^{-1}\qquad\mbox{ in }\Xi_\bullet.   
\end{array}
\end{equation}
Using this estimate and the fact that there holds $v^\eta = R_{-\gamma}^\eta V^\eta$, one establishes the  inequalities
\bea\label{a0}
\big\Vert v^\eta ; \mathring{H}_{\eta,\gamma}^{1} (\Xi) \big\Vert \leq 
c_\gamma \big\Vert V^\eta ; \mathring{H}_{\eta,0}^{1} (\Xi) \big\Vert
\leq C_\gamma \big\Vert v^\eta ; \mathring{H}_{\eta,\gamma}^{1} (\Xi) \big\Vert,
\eea
where the constants $c_\gamma$, $C_\gamma$ are independent of
$\eta$ and $v^\eta$. Making the substitution $v^\eta = R_{\gamma}^\eta V^\eta$, $\phi=R_{-\gamma}^\eta\Phi$ in the integral identity \eqref{a7}, we get
\[
\mathfrak{a}_\Xi(V^\eta,\Phi) +\tilde{\mathfrak{a}}_\Xi (V^\eta, \Phi)
= F( R_{-\gamma}^\eta \Phi),\qquad\forall \Phi\in 
\mathring{H}^1 (\Xi), 
\]
with 
\[
\tilde{\mathfrak{a}}_\Xi(V^\eta,\Phi) =\big( \nabla_x V^\eta, 
\Phi R_{-\gamma}^\eta \nabla_x R_{\gamma}^\eta \big)_{\Xi_\bullet}
+ \big( V^\eta  R_{\gamma}^\eta \nabla_x R_{-\gamma}^\eta, \nabla_x \Phi 
\big)_{\Xi_\bullet} +\big( V^\eta R_{\gamma}^\eta \nabla_x R_{-\gamma}^\eta,\Phi R_{-\gamma}^\eta \nabla_x R_{\gamma}^\eta \big)_{\Xi_\bullet}.
\]
Exploiting (\ref{EstimDeriv}) as well as (\ref{estima1})--(\ref{a10}), one finds
\[
|\tilde{\mathfrak{a}}_\Xi(V^\eta,\Phi)| \le C\,|\gamma|\,\big\Vert V^\eta ; \mathring{H}_{\eta,0}^{1} (\Xi) \big\Vert\big\Vert \Phi ; \mathring{H}_{\eta,0}^{1} (\Xi) \big\Vert \le C\,|\gamma|\,|\mathfrak{a}_\Xi(V^\eta,V^\eta)|^{1/2}|\mathfrak{a}_\Xi(\Phi,\Phi)|^{1/2}.
\]
Using again (\ref{a0}), this yields the following assertion
\BEP \label{propAP1}
There exist $\eta_0 > 0$ and $\gamma_0 >0$ such that for $\gamma
\in (-\gamma_0; \gamma_0)$, the solution $v^\eta \in \mathring{H}^1(\Xi)$
of Problem \eqref{a7} with the functional $F\in 
\mathring{H}^1(\Xi)^*$ satisfies the estimate
\bea
\big\Vert v^\eta ; \mathring{H}_{\eta,\gamma}^{1}(\Xi)\big\Vert \leq c\, 
\big\Vert F ; \mathring{H}_{\eta,-\gamma}^{1}(\Xi)^* \big\Vert, \label{a17}
\eea
where the coefficient $c$ does not depend on $\eta \geq \eta_0 $ or $\gamma$. 
\ENP
\noindent A similar trick allows one to verify the next claim. 

\BEP  \label{propAP2} 
Assume that Problem \eqref{29} has no trapped mode solutions. 
Then there exists $\gamma_0 > 0$ such that, for $\gamma \in (-\gamma_0;
\gamma_0)$, the integral identity for Problem \eqref{a2} in the space
$\mathring{W}^{\mrm{out}}_\gamma(\Omega_\bullet)$ (see \eqref{a4}),
namely
\bea\label{a18}
\begin{array}{l}
-\dsp\int_{\Omega_\bullet}(\Delta+\lambda)\bigg(\chi\sum_{j=0}^{J-1} a_j w_j^+\bigg)\overline{\varphi}\,dx +(\nabla_x \tilde{u}^\infty, \nabla_x \varphi )_{\Omega_\bullet}
- \lambda ( \tilde{u}^\infty, \varphi )_{\Omega_\bullet}
= F( \varphi)
\end{array}
\eea 
for all $\varphi \in\mathring{W}_{-\beta, -\gamma}^{1}(\Omega_\bullet)$, has a unique solution $u^\infty \in \mathring{W}^{\mrm{out}}_\gamma(\Omega_\bullet)$
with the estimate
\[
\big\Vert u^\infty ; \mathring{W}^{\mrm{out}}_\gamma(\Omega_\bullet) \big\Vert
\leq c\,\big\Vert F ; \mathring{W}_{-\beta, -\gamma}^{1} (\Omega_\bullet)^* \big\Vert.
\]
Here the constant $c$ is independent of $F$ and $\gamma$. Moreover,
when $F \in \mathring{W}_{-\beta, -\gamma}^1 (\Omega_\bullet)^* 
\cap \mathring{W}_{-\beta, 0}^1 (\Omega_\bullet)^* $, this solution belongs
to $\mathring{W}^{\mrm{out}}(\Omega_\bullet)$ and therefore coincides
with the unique solution in this space.  
\ENP
\noindent Let us mention that the result of Proposition \ref{propAP2} can be made stronger by methods of the general theory presented in \cite{Ko1, MaPledge, MaPlpoly} (see also
\cite[Ch.\,8\,\S\,4]{NaPl} and other papers) concerning elliptic boundary value problems in domains with piecewise smooth boundaries. More precisely, introduce the bounded operator $\cA^\infty_{\beta,\gamma}:\mathring{W}^{\mrm{out}}_\gamma(\Omega_\bullet)\to \mathring{W}_{-\beta, -\gamma}^{1}(\Omega_\bullet)^\ast$  such that 
\begin{equation}\label{a20} 
\begin{array}{l}
\langle \cA_\beta^\infty u^{\infty},\varphi\rangle_{\Om_{\bullet}}=-\dsp\int_{\Omega_\bullet}(\Delta+\lambda)\bigg(\chi\sum_{j=0}^{J-1} a_j w_j^+\bigg)\overline{\varphi}\,dx+(\nabla_x \tilde{u}^\infty, \nabla_x \varphi )_{\Omega_\bullet}
- \lambda ( \tilde{u}^\infty, \varphi )_{\Omega_\bullet} 
\end{array}
\end{equation}
for all  $u^{\infty}\in\mathring{W}^{\mrm{out}}_\gamma(\Omega_\bullet),\ \varphi\in\mathring{W}_{-\beta, -\gamma}^{1}(\Omega_\bullet)$. Then regarding the smooth surface $\partial \mathcal{O}$ as a $(d-1)$-dimensional edge and applying the results of the above-mentioned references, one can establish the following assertion. 
\BET \label{theAT4} 
Assume that $\beta$ satisfies (\ref{HypoWeight}) and that
\bea
\gamma \in (-1;1).  \label{a21}
\eea
Then the operator $\cA^\infty_{\beta,\gamma}:\mathring{W}^{\mrm{out}}_\gamma(\Omega_\bullet)\to \mathring{W}_{-\beta, -\gamma}^{1}(\Omega_\bullet)^\ast$   defined in (\ref{a20}) is Fredholm of index zero. 
\ENT
\noindent We emphasize that in contrast to Proposition \ref{propAP2}, Theorem \ref{theAT4}
does not require the absence of trapped modes. It is worth 
mentioning that the requirement $\gamma > -1$  in \eqref{a21} guarantees that
all smooth functions vanishing on $\partial \mathcal{O}$ and having compact support are contained in  $\mathring{W}_{\beta,\gamma}^{1}(\Omega_\bullet)$, while the restriction $\gamma < 1$ excludes from this space the functions with 
singularities of the order $O(\varrho^{-1})$ on $\partial \mathcal{O}$. 
Examples of such functions  are Green's functions having a singular point on
$\partial \mathcal{O}$ and integrated over $\partial \mathcal{O}$ with a smooth density. In the following, Proposition \ref{propAP2} will be sufficient for our needs.\\
\newline
$4^\circ$. Proving the solvability of the original problem for large $\eta$. Introduce the bounded operator $\cB^\eta_{\beta}:W_{\eta}^{\mrm{out}} (\Omega)\to W_{-\beta,\eta}^{1} (\Omega)^*$  (see the definition of theses spaces in (\ref{51}), (\ref{510})) such that 
\[
\langle \cB_\beta^\eta u,v\rangle_{\Om}=-\int_{\Om}(\Delta+\lambda(1+i\eta\,b))\bigg(\chi\sum_{j=0}^{J-1} a_j w_j^+\bigg)\overline{v}\,dx+\int_{\partial\Om}\partial_n\bigg(\chi\sum_{j=0}^{J-1} a_j w_j^+\bigg)\overline{v}\,dx+\mathfrak{a}(\tilde{u},v)
\]
for all $u\in W_{\eta}^{\mrm{out}} (\Omega)$, $v\in W_{-\beta,\eta}^{1} (\Omega)$. Observe that the action of $\cB_\beta^\eta$ is the same as that of the operator $\cA_\beta^\eta : W^{\mrm{out}}(\Omega) \to W_{-\beta}^1 (\Omega)^\ast$ introduced in (\ref{DefCA}). However it is measured with different norms, this is why we give a different name. According to Proposition \ref{propP2} item $2)$, as well (\ref{estima1}), (\ref{a10}), we know that for all $\eta>0$, $\cB_\beta^\eta$ is an isomorphism. Therefore to show Theorem \ref{thT3}, it remains to prove that $\cB_\beta^\eta$ is uniformly invertible for large $\eta$. To proceed, we will construct an ``almost inverse'' operator for $\cB_\beta^\eta$, that is a linear and continuous mapping \bea
\cT_\beta^\eta : W_{-\beta,\eta}^{1} (\Omega)^* \to  
W_{\eta}^{\mrm{out}} (\Omega) \label{a24}  
\eea
such that 
\bea\label{a25}
\big\Vert \cB_\beta^\eta \,\cT_\beta^\eta-{\rm Id} ; W_{-\beta,\eta}^{1} (\Omega)^* \to  W_{-\beta,\eta}^{1} (\Omega)^*\big\Vert 
\leq c \eta^{- \delta}.    
\eea
Here Id is the identity mapping, $\delta$ is some positive number
and $c$ is independent of $\eta \ge \eta_0$ for some $\eta_0>0$. This will guarantee that $\cB_\beta^\eta \,\cT_\beta^\eta$ is an isomorphism for $\eta$ large enough and ensure that $(\cB_\beta^\eta)^{-1}$ is uniformly continuous as we will explain at the end of the proof. \\
\newline
To establish such a result, we apply a trick proposed in \cite{na19}, see also \cite[Ch.\,5]{MaNaPl}, \cite{na239,na564} and others for its usage.
For a functional $F\in W_{-\beta,\eta}^{1} (\Omega)^*$, we set 
\begin{equation}\label{b1} 
\begin{array}{l}
F_\bullet^\eta (\varphi) =  F\big( (1 - \zeta^\eta) \varphi\big),\qquad \forall \varphi  \in \mathring{W}_{-\beta, 0}^{1}(\Omega_\bullet) 
\\[4pt]
F_\circ^\eta (\phi) =  F ( \zeta^\eta \phi ),\qquad \forall \phi  \in \mathring{H}_{\eta,0}^{1} (\Xi),
\end{array}
\end{equation}
where $\zeta^\eta$ is a cut-off function which is equal to 1 in the
$\eta^{-1/4}$-neighborhood of $\overline{\mathcal{O}}$ and vanishes outside
its $2\eta^{-1/4}$-neighborhood.\\
\newline
First, since we have 
\[
{\rm supp} \, F_\bullet^\eta \subset \{ x \in \overline{\Omega} \, |\, 
\varrho(x) \geq \eta^{-1/4} \},
\]
we know that $F_\bullet^\eta\in \mathring{W}_{-\beta, \gamma}^{1}(\Omega_\bullet)^\ast$ for all $\gamma\in\R$. Additionally, for $\delta\ge0$, using that $1\le \eta^\delta\rho^{4\delta}$ as well as $(\varrho + \eta^{-1/2} )^{-1}\le \varrho^{-1}$ on the support of $(1-\zeta^\eta)$, we can write
\[
\begin{array}{rcl}
|F_\bullet^\eta(\varphi) | &\leq & 
c\Vert F ; W_{-\beta,\eta}^{1} (\Omega)^*  \Vert 
\, \Vert (1-\zeta^\eta) \varphi ; W_{-\beta,\eta}^{1} ( \Om)  \Vert\\[4pt]
&\le & C\eta^\delta\Vert F ; W_{-\beta,\eta}^{1} (\Omega)^* \Vert
 \Vert   \varphi ; \mathring{W}_{-\beta,4 \delta}^1 ( \Omega_\bullet)  \Vert.
\end{array}
\]
Thus for all $\delta \geq 0$, there holds
\begin{equation}\label{b4}
\Vert F_\bullet^\eta ; \mathring{W}_{-\beta,4 \delta}^{1} (\Omega_\bullet)^* \Vert \le C \eta^\delta \Vert  F ; W_{-\beta,\eta}^{1} ( \Omega)^*   \Vert .
\end{equation}
Second, in view of the inclusion
\bea\label{b5}
{\rm supp} \, F_\circ^\eta \subset \{ x \in \Xi \, : \, 
{\rm dist} \, (x, \mathcal{O}) \leq 2 \eta^{-1/4} \}, 
\eea
we can write
\[ 
\begin{array}{rcl}
|F_\circ^\eta(\phi) | &\leq & c\Vert F ; W_{-\beta,\eta}^1 (\Omega)^*  \Vert 
\, \Vert \zeta^\eta\phi ; W_{-\beta,\eta}^{1} ( \Om)  \Vert\\[4pt]
&\leq & c\Vert F ; W_{-\beta,\eta}^1 (\Omega)^*  \Vert 
\, \Vert \phi ; \mathring{H}_{\eta,0}^{1} ( \Xi)  \Vert\\[4pt]
 & \le & C\eta^{-\delta}\Vert F ; W_{-\beta,\eta}^1(\Omega)^* \Vert
 \Vert \phi ; \mathring{H}_{\eta, - 4 \delta}^1(\Xi)  \Vert.
\end{array}
\]
Above the small coefficient $\eta^{-\delta}$ appears for the following two reasons: in the definition \eqref{a12} of the norm of the 
space $\mathring{H}_{\eta,\gamma}^1(\Xi)$ with $\gamma = - 4 \delta$, the norms on the subdomain $\mathcal{O}$ are multiplied by the factor $\eta^{4 \delta } \geq c \eta^\delta$
and the norms on the subdomain $\Xi_\bullet$ gain the weight 
$(\varrho(x) + \eta^{-1/2} )^{-4 \delta}$, which exceeds $c \eta^\delta$ for
$x \in \Xi_\bullet \cap {\rm supp}\, F_\circ^\eta$, according to \eqref{b5}. Thus for all $\delta \geq 0$, there holds
\begin{equation}\label{b6}
\Vert F_\circ^\eta ; \mathring{H}_{\eta,-4 \delta}^{1} (\Xi) \Vert
\le  C \eta^{-\delta} \Vert  F ; W_{-\beta,\eta}^1 ( \Omega)^*   \Vert.
\end{equation}
Now assume $\delta \ge 0$ to be small enough so that $4\delta\in[0;\gamma_0)$ where $\gamma_0$ is the smallest of the two weight indices appearing in  Propositions \ref{propAP1} and \ref{propAP2}
(recall that in this section we assume that trapped modes do not exist for Problem \eqref{29}).  This guarantees the existence of solutions
\begin{equation}\label{DefTermCons}
u_\bullet^\eta \in \mathring{W}^{\mrm{out}}_{-4\delta}(\Omega_\bullet)\qquad
\mbox{and }\qquad u_\circ^\eta \in \mathring{H}_{\eta,4\delta}^{1} (\Xi)
\end{equation}
to the problems \eqref{a2} with $F = F_\bullet^\eta$ and \eqref{a7} with $F= F_\circ^\eta$ respectively. Moreover, we get the 
estimates
\begin{equation}\label{b7}
\begin{array}{rcl}
\Vert u_\bullet^\eta ; \mathring{W}^{\mrm{out}}_{-4\delta}(\Omega_\bullet) \Vert 
&\leq & c \Vert F_\bullet^\eta ; \mathring{W}_{-\beta,4 \delta}^{1} (\Omega_\bullet)^* \Vert 
\leq
C\eta^\delta  \Vert F ; W_{-\beta,\eta}^{1} ( \Omega)^*  \Vert,\\[4pt]
\Vert u_\circ^\eta ; \mathring{H}_{\eta,4\delta}^{1} (\Xi) \Vert 
&\leq & c \Vert F_\circ^\eta ; \mathring{H}_{\eta,-4 \delta}^{1} (\Xi)  \Vert 
\leq
C\eta^{- \delta}  \Vert F ;  W_{-\beta,\eta}^1 ( \Omega)^* \Vert .
\end{array}
\end{equation}
Finally we define the function $\cT_\beta^\eta F$ such that
\[
\cT_\beta^\eta F  = \zeta_\bullet^\eta  u_\bullet^\eta + \zeta_\circ  u_\circ^\eta,
\]
where the cut-off functions $\zeta_\bullet^\eta \in \mathscr{C}^\infty_0(\overline{\Omega})$
and $\zeta_\circ \in \mathscr{C}_0^\infty(\Xi)$ are such that
\begin{equation}\label{d0}
\zeta_\bullet^\eta(x)=\begin{array}{|ll}
\,1 & \mbox{for dist$(x, \mathcal{O}) \geq 2 \eta^{-1/2}$}\\[2pt]
\,0 & \mbox{for dist$(x, \mathcal{O}) \leq \eta^{-1/2}$},
\end{array}\quad\qquad \zeta_\circ(x)  = 1 \ \mbox{for dist$(x, \mathcal{O}) \leq d_\circ$ }
\end{equation}
Here we choose $d_\circ >0$ such that the $d_\circ$-neighborhood of $\overline{\mathcal{O}}$ is contained in  $\Xi$. However note that $\zeta_\circ$ is independent of $\eta$. Contrary to the pair $\zeta^\eta$, $1-\zeta^\eta$ introduced in (\ref{b1}), the cut-off functions $\zeta_\bullet^\eta$, $\zeta_\circ$ do not constitute a partition of unity because their supports overlap. This is a crucial ingredient in the method to obtain the desired estimate (\ref{a25}).\\
\newline
Observe first that by using (\ref{b7}) with $\delta=0$, we obtain the equality
\begin{equation}\label{EstimBase}
\Vert \cT_\beta^\eta F ; W_{\eta}^{\mrm{out}} (\Omega) \Vert 
\leq C \Vert F ; W_{-\beta,\eta}^1 (\Omega)^* \Vert
\end{equation}
which will be useful at the end of the proof. Now let us estimate $\big\Vert \cB_\beta^\eta\cT_\beta^\eta-{\rm Id} ; W_{-\beta,\eta}^{1} (\Omega)^* \to  W_{-\beta,\eta}^{1} (\Omega)^*\big\Vert$, that is the left-hand side of \eqref{a25}. A direct calculus yields, for all $v\in \mathscr{C}^\infty_0(\overline{\Om})$,
\begin{equation}\label{d1}
\begin{array}{ll}
 &  \big( \nabla_x( \zeta_\bullet^\eta u_\bullet^\eta + \zeta_\circ u_\circ^\eta ),
\nabla_x v \big)_\Omega
- \lambda \big( (1+  i\eta\,b) (\zeta_\bullet^\eta u_\bullet^\eta + \zeta_\circ u_\circ^\eta ),
v \big)_\Omega - F (v) \\[4pt]
= &\big( \nabla_x u_\bullet^\eta, \nabla_x ( \zeta_\bullet^\eta v ) \big)_{\Omega_\bullet}
- \lambda (  u_\bullet^\eta, \zeta_\bullet ^\eta v )_{\Omega_\bullet} + I_\bullet^\eta(v) \\[3pt]
&\mathfrak{a}_\Xi(u_\circ^\eta, \zeta_\circ v )  +I_\circ^\eta(v) 
 - F (v), 
\end{array}
\end{equation}
with
\begin{equation}\label{d2}
\begin{array}{l}
 I_\bullet^\eta(v) = \big( u_\bullet^\eta \nabla_x \zeta_\bullet^\eta, \nabla_x  v  \big)_{\Omega_\bullet} - 
\big( \nabla_x u_\bullet^\eta, v \nabla_x  \zeta_\bullet^\eta  \big)_{\Omega_\bullet}\\[4pt]
 I_\circ^\eta(v) = \big( u_\circ^\eta \nabla_x \zeta_\circ  
, \nabla_x  v  \big)_\Xi - 
\big( \nabla_x u_\circ^\eta, v \nabla_x  \zeta_\circ  \big)_\Xi.
\end{array}
\end{equation}
Next we work on each of the terms of (\ref{d1}). First, by definition of $u_\bullet^\eta$, $u_\circ^\eta$, we observe that
\begin{equation}\label{d3}
\begin{array}{l}
\big( \nabla_x u_\bullet^\eta, \nabla_x ( \zeta_\bullet^\eta v ) \big)_{\Omega_\bullet} - \lambda (  u_\bullet^\eta, \zeta_\bullet ^\eta v )_{\Omega_\bullet}=F^\eta_\bullet( \zeta_\bullet^\eta v) = F ( (1- \zeta^\eta)\zeta_\bullet^\eta v)= F ( (1- \zeta^\eta) v) \\[4pt]
\mathfrak{a}_\Xi ( u_\circ^\eta, \zeta_\circ    v  )
= F_\circ^\eta( \zeta_\circ v ) = F (\zeta^\eta \zeta_\circ v)= F (\zeta^\eta v).
\end{array}
\end{equation}
Here we used that $(1- \zeta^\eta)\zeta_\bullet^\eta = 1- \zeta^\eta$ and $\zeta^\eta \zeta_\circ= \zeta^\eta$ due to the definition of the cut-off functions. Thus the sum of the terms (\ref{d3}) compensate $F (v)$ in (\ref{d1}). Therefore it remains only to estimate the quantities $I_\bullet^\eta(v)$, $I_\circ^\eta(v)$ defined in \eqref{d2}. \\
\newline 
First, from the definition of $\zeta_\circ$ in \eqref{d0}, we see that supp\,$|\nabla_x \zeta_\circ|
\subset \{ x \in \overline{\Xi} \, : \, {\rm dist}\, (x, \mathcal{O}) \geq d_\circ\}$ so that the integration in $I_\circ^\eta(v)$ is performed at distance from $\mathcal{O}$. Using \eqref{a12}, \eqref{b7}, this allows us to write 
\begin{equation}\label{d5}
\begin{array}{rcl}
|I_\circ^\eta (v)| &\leq  &c\,\big\Vert  v ; \mathring{H}_{\eta,0}^{1} ({\rm supp}\,|\nabla_x \zeta_\circ|) \big\Vert
\, \big\Vert u_\circ^\eta ; \mathring{H}_{\eta,0}^{1} ({\rm supp}\,|\nabla_x \zeta_\circ| )
\big\Vert \\[4pt]
&\le &c\,\big\Vert  v ; W_{-\beta,\eta}^1 (\Omega) \big\Vert
\, \big\Vert u_\circ^\eta ; \mathring{H}_{\eta,4 \delta}^1 ( \Xi) \big\Vert  \\[4pt]
&\le &c\,\eta^{- \delta}\big\Vert  v ; W_{-\beta,\eta}^1 (\Omega) \big\Vert
  \big\Vert F ; W_{-\beta,\eta }^1( \Omega)^* \big\Vert .
\end{array}
\end{equation}
Second, to assess the quantity $I_\bullet^\eta(v)$, we observe that according to \eqref{d0}, we have
\[
{\rm supp}\, |\nabla \zeta_\bullet^\eta| \subset   \big\{ x \in \Xi \,|\,
{\rm dist} \, (x,\mathcal{O}) \in [\eta^{-1/2};2 \eta^{-1/2} ] \big\}.
\]
Therefore on ${\rm supp}\, |\nabla \zeta_\bullet^\eta|$ the weight $1$ which appears in $\mathring{W}^{\mrm{out}}( \Omega_\bullet) $ is bounded by $2^{4\delta}\eta^{-2\delta}$  times $\varrho ^{- 4\delta}$ which is the weight in the norm of $\mathring{W}^{\mrm{out}}_{-4 \delta}( \Omega_\bullet)$ (see \eqref{a4}). This allows us to write, using also \eqref{b7},
\begin{equation}\label{d7}
\begin{array}{rcl}
 |I_\bullet^\eta (v)|  &\leq &
c\,\big\Vert v ; W_{-\beta,\eta}^1 (\Omega) \big\Vert
\, \big\Vert   \tilde{u}_\bullet^\eta ; \mathring{W}^{\mrm{out}}( {\rm supp}\, |\nabla \zeta_\bullet^\eta|) \big\Vert \\[4pt]
&\le & c\,\eta^{-2 \delta}\big\Vert  v ; W_{-\beta,\eta}^1 (\Omega) \big\Vert
  \big\Vert u_\bullet^\eta ; \mathring{W}^{\mrm{out}}_{-4 \delta}( \Omega_\bullet) 
\big\Vert  \\[4pt]
&\le &  c\,\eta^{- \delta} \big\Vert  v ; W_{-\beta,\eta}^1 (\Omega) \big\Vert     \big\Vert F ; W_{-\beta,\eta }^1( \Omega)^* \big\Vert .
\end{array}
\end{equation}
Here we took into account the coincidence of $\tilde{u}^\eta_\bullet$ and $u_\bullet^\eta$ on ${\rm supp}\, |\nabla \zeta_\bullet^\eta|$. Note above that the fact that ${\rm supp}\, |\nabla \zeta_\bullet^\eta|\ne{\rm supp}\, |\nabla \zeta^\eta|$ is essential to get the $\eta^{- \delta}$ at the end of the day. \\
\newline
Gathering \eqref{d5} and \eqref{d7}, finally we obtain \eqref{a25}. It is time now to detail the final step. Denote by $\cR_\beta^\eta:W_{-\beta,\eta}^{1} (\Omega)^* \to  W_{-\beta,\eta}^{1} (\Omega)^*$ the remainder $\cR_\beta^\eta:=\cB_\beta^\eta \,\cT_\beta^\eta-{\rm Id}$. From \eqref{a25}, we know that $\cT_\beta^\eta\,(\mrm{Id} +\cR_\beta^\eta)^{-1}$ is a right inverse for $\cB_\beta^\eta$, i.e. we have $\cB_\beta^\eta\, \cT_\beta^\eta\,(\mrm{Id} +\cR_\beta^\eta)^{-1}=\mrm{Id}$ for $\eta$ large enough. Therefore for a given $F\in W^1_{-\beta}(\Om)^\ast$, the function $u\in W^{\mrm{out}}(\Om)$ satisfying $\mathcal{A}^\eta_\beta u=\cB_\beta^\eta u=F$ is given by $u=\cT_\beta^\eta\,(\mrm{Id} +\cR_\beta^\eta)^{-1}F$. Using (\ref{EstimBase}), finally we can write
\[
\Vert u; W_{\eta}^{\mrm{out}} (\Omega) \Vert 
\leq c \Vert (\mrm{Id} +\cR_\beta^\eta)^{-1}F; W_{-\beta,\eta}^1 (\Omega)^* \Vert \leq C \Vert F ; W_{-\beta,\eta}^1 (\Omega)^* \Vert,
\]
which completes the proof of Theorem \ref{thT3}.\\
\newline
$5^\circ$. Trapped modes. In the statement of the uniform stability estimate of Theorem \ref{thT3}, we assumed that the limit Problem \eqref{29} does not have trapped modes. Let us explain how to proceed when this assumption is not met. To set ideas we assume that the space of trapped modes is of dimension $K$, spanned by some functions $m_1^\tr, \dots, m_K^\tr \in W_\beta^1(\Omega_\bullet) \subset H^1(\Omega_\bullet)$. First, as shown in Section \ref{sec5}, trapped modes do not affect the asymptotics of the scattering matrix because the
diffraction solutions can be subject to the orthogonality conditions 
\eqref{conditionsTrappedModes}. However we need to comment the construction of the almost inverse operator $\cT_\beta^\eta$ in \eqref{a24} which is used in deriving the a priori estimate. Since Problem
\eqref{a2} is no more uniquely solvable, we modify the representation of 
functions belonging to the space $W^{\mrm{out}}_{\eta}(\Omega)$ by setting
\begin{equation}\label{d8}
u = \eta^{1/2} \sum_{k= 1}^K c_k M_k^\tr  +
\chi \sum_{j=0}^{J-1} a_j w_j^+  + \tilde{u},
\end{equation}
where the $M_k^\tr$ are defined as follows:
\begin{equation*}
M_k^\tr (x) = \left\{
\begin{array}{ll}
m_k^\tr (x) + \eta^{-1/2} m_k'(x), & \qquad x\in \Omega_\bullet \\[2pt]
\eta^{-1/2}\chi_\mathcal{O}(x) E(\sqrt{\eta	}n,s) \partial_n m_k^\tr|_{\partial\mathcal{O}} (s), & \qquad x \in \mathcal{O}.
\end{array}
\right.
\end{equation*}
Above $m_k'\in H^1(\Om)$ are some functions supported in $\Xi $ and satisfying the boundary condition
\beas
m_k'(x) = E(0,s) \partial_n m_k^\tr|_{\partial\mathcal{O}} (s), \qquad x \in \partial \mathcal{O},
\eeas
so that there holds $M_k^\tr \in H^1 (\overline{\Omega})$. The
coefficients $c_1, \dots, c_K$ are needed to fulfil the compatibility
conditions \eqref{conditionsTrappedModes} in Problem \eqref{a2} for $u_\bullet^\eta$ (see (\ref{DefTermCons})). The procedure is similar to the one  employed in Section \ref{sec5}. The modification \eqref{d8} requires the following new definition of the
norm in the space $W^{\mrm{out}}_\eta(\Omega)$,
\beas
\big\Vert u ; W^{\mrm{out}}_\eta(\Omega) \big\Vert = \inf \Big\{ \eta^{1/2} 
\sum_{k=1}^K |c_k| + \sum_{j=0}^{J-1} |a_j| + 
\big\Vert \tilde{u} ; W_{\beta,\eta}^{1}(\Omega) \big\Vert \Big\},
\eeas
where the infimum is taken over all  representations \eqref{d8}. Notice
that $M_k^\tr\in W_{\beta,\eta}^1(\Omega)$ and therefore 
representation \eqref{d8} is not unique. This is the very reason for
the transition from the Hilbert norm \eqref{51} to the Banach norm
\eqref{d8}. All other steps in the procedure remain the same.

\vskip1cm

\end{document}